\newcommand{\Be}{\begin{equation}}
\newcommand{\Ee}{\end{equation}}
\newcommand{\Bea}{\begin{eqnarray}}
\newcommand{\Eea}{\end{eqnarray}}
\newcommand{\Beas}{\begin{eqnarray*}}
\newcommand{\Eeas}{\end{eqnarray*}}
\newcommand{\Benu}{\begin{enumerate}}
\newcommand{\Eenu}{\end{enumerate}}
\newcommand{\Bi}{\begin{itemize}}
\newcommand{\Ei}{\end{itemize}}
\def\intslash{\rlap{\kern  .32em $\mspace {.5mu}\backslash$ }\int}
\def\qsl{{\rlap{\kern  .32em $\mspace {.5mu}\backslash$ }\int_{Q_x}}}
\def\ga{\gamma}
\def\cf{{\it cf}}
\def\loc{{\text{\rm loc}}}
\def\supp{{\text{\rm supp}}}
\def\noi{\noindent}
\def\meas{{\text{\rm meas}}}
\def\lc{\lesssim}
\def\gc{\gtrsim}
\def\eps{\varepsilon}
\def\ka{\kappa}
\def\vk{\varkappa}
\def\la{\lambda}             \def\La{\Lambda}
\def\si{\sigma}
\def\fJ{{\mathfrak {J}}}
\def\fM{{\mathfrak {M}}}
\def\fS{{\mathfrak {S}}}
\def\fz{{\mathfrak {z}}}
\def\bbC{{\mathbb {C}}}
\def\bbR{{\mathbb {R}}}
\def\bbZ{{\mathbb {Z}}}
\def\cC{{\mathcal {C}}}
\def\cD{{\mathcal {D}}}
\def\cE{{\mathcal {E}}}
\def\cF{{\mathcal {F}}}
\def\cG{{\mathcal {G}}}
\def\cH{{\mathcal {H}}}
\def\cI{{\mathcal {I}}}
\def\cK{{\mathcal {K}}}
\def\cO{{\mathcal {O}}}
\def\cT{{\mathcal {T}}}
\def\cV{{\mathcal {V}}}
\def\cW{{\mathcal {W}}}
\def\I{{\hbox{\bf I}}}
\def\be#1{\begin{equation}\label{ #1}}
\def\endeq{\end{equation}}
\def\endal{\end{align}}
\def\bas{\begin{align*}}
\def\eas{\end{align*}}
\def\bi{\begin{itemize}}
\def\ei{\end{itemize}}
\def\eps{\varepsilon}
\def\emph#1{{\it #1}}
\def\textbf#1{{\bf #1}}
\def\bbone{{\mathbbm 1}}
\theoremstyle{plain}
  \newtheorem{theorem}{Theorem}[section]
   \newtheorem{proposition}[theorem]{Proposition}
   \newtheorem{lemma}[theorem]{Lemma}
   \newtheorem{corollary}[theorem]{Corollary}
\theoremstyle{remark}
\theoremstyle{definition}
\begin{document}

\title[]
{On square functions with   independent increments and Sobolev
spaces on the line}

\author[]{Juli\`a Cuf\'i   \ \ \   Artur Nicolau \ \ \  Andreas Seeger \ \ \  Joan Verdera}

\address{J. Cuf\'i, A. Nicolau, J. Verdera\\Departament de Matem\`atiques\\ Universitat Aut\`onoma de Barcelona\\ 08193 Bellaterra (Barcelona)\\ Catalonia}
\email{jcufi@mat.uab.cat} \email{artur@mat.uab.cat}
\email{jvm@mat.uab.cat}

\address{A. Seeger, Department of Mathematics \\ University of Wisconsin \\480 Lincoln Drive\\ Madison, WI,
53706, USA} \email{seeger@math.wisc.edu}


\begin{abstract}  We prove a characterization of some $L^p$-Sobolev spaces
involving  the quadratic symmetrization of the Calder\'on
commutator kernel, which is related to a square function with
differences of difference quotients. An endpoint weak type estimate
is established for functions in homogeneous Hardy-Sobolev spaces
$\dot H^1_\alpha$.  We also use a  local version of this square
function to characterize
pointwise differentiability 
 for functions  in the Zygmund class.
\end{abstract}
\subjclass[2010]{} \keywords{}

\date\today

\maketitle



\section{Introduction}

In this paper we give a characterization of Sobolev spaces on the
real line by a square function which appears in some proofs  of the
$L^2$-boundedness of the first Calder\'on  commutator \cite{verdera}
and the Cauchy integral on a
 Lipschitz or chord arc curve \cite{mv}, \cite{verdera}. Moreover, a local version of this square function can be used to describe the set of points where a given function is pointwise differentiable.

Our square function acts on functions on the real line and involves
the
difference of two difference quotients
with increments $s$ and $t$.
Define \Be \label{squarefct} S_\alpha f(x) =
\Big(\iint_{\bbR\times\bbR}  \Big|\frac{f(x+s)-f(x)}{s} -
\frac{f(x+t)-f(x)}{t}  \Big|^2 \frac{ds\,dt}{|s-t|^{2\alpha}}
\Big)^{1/2}. \Ee 
This square function is a rough relative of the more standard Marcinkiewicz square function
associated with second differences,
\Be\label{Marcsq} 
G_\alpha f(x)= \Big(\int_0^\infty \frac{|f(x+2t)-2f(x+t)+f(x)|^2}{t^{1+2\alpha} }dt\Big)^{1/2},
\Ee which was introduced for $\alpha=1$ by Marcinkiewicz to investigate questions about pointwise
differentiability (see \cite{stein-sqf}). In \S\ref{ptsect} we prove  that for $\alpha\ge 0$ and $f\in L^2(\bbR)$ there is a pointwise majorization
\Be\label{Galpha-maj}G_{\alpha} f(x) \le C(\alpha) S_\alpha f(x).\Ee

We shall prove sharp results on  mapping properties of $S_\alpha$ when acting in $L^p$-Sobolev spaces.
Our starting point is the identity 
$\|S_1f\|_2 =c \|f'\|_2$,  proved in \cite{mv} by an application of Plancherel's theorem.
We aim for an
analogous characterizations of other homogeneous Sobolev spaces
$\dot H^p_\alpha$, with $p\neq 2$ and suitable $\alpha$. It is proved  in \S\ref{necessary} that such a characterization is limited to the range $1/2<\alpha<3/2$.
Recall that, for
$1<p<\infty$, the (semi)-norm on $\dot H^p_\alpha$ is given by
$\|\cD^\alpha\! f\|_p$,  where $\cD^\alpha$ denotes  the Riesz
derivative operator of order $\alpha$; it is defined by $\widehat
{\cD^\alpha \!g}(\xi)=|\xi|^{\alpha}\widehat g(\xi)$,
at least for  Schwartz functions whose Fourier transform is
compactly supported in $\bbR\setminus\{0\}$. Of course $\cD^\alpha$
is the inverse of the  Riesz potential operator
$I^\alpha=\cD^{-\alpha}$,
 given for $0<\alpha<1$ by $I^{\alpha}f=\gamma(\alpha)\cdot (f* |x|^{\alpha-1})$, with $\gamma(\alpha)$ a constant,
 and defined for other $\alpha $ by analytic continuation. The space
 $\dot H^p_\alpha$~
consists  of all distributions which are Riesz potentials of order
$\alpha$ of $L^p$ functions.
See \cite{stein61}, \cite{stein-book}, \cite{triebel2},
\cite{triebel-hom} for more on  these spaces.

\begin{theorem}\label{sobchar}
Let $1<p<\infty$, $ 1/2<\alpha<3/2$. Then
$$\|\cD^{\alpha} \!f\|_{ L^p(\bbR)} \approx \|S_\alpha f\|_{L^p(\bbR)}\,.$$
Here the implicit constants depend only on $p$ and $\alpha$.
\end{theorem}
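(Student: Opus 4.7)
The plan is to reformulate the equivalence as an $L^p \to L^p(L^2)$ bound for a vector-valued convolution operator and invoke Hilbert-space valued Calder\'on--Zygmund theory. Setting $g = \cD^\alpha f$ and
\[
Ug(x;s,t) \;:=\; \frac{1}{|s-t|^\alpha}\Bigl(\frac{I^\alpha g(x+s)-I^\alpha g(x)}{s} \,-\, \frac{I^\alpha g(x+t)-I^\alpha g(x)}{t}\Bigr),
\]
one has $\|S_\alpha f\|_p = \|Ug\|_{L^p(\bbR;\,L^2(ds\,dt))}$, so the theorem is equivalent to $\|Ug\|_{L^p(L^2)} \approx \|g\|_p$ for $1<p<\infty$. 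On $L^2$ the operator $U$ is a Fourier multiplier with $L^2(ds\,dt)$-valued symbol
\[
m(\xi,s,t) \;=\; \frac{|\xi|^{-\alpha}}{|s-t|^\alpha}\Bigl[\frac{e^{is\xi}-1}{s}-\frac{e^{it\xi}-1}{t}\Bigr],
\]
and the scaling $(s,t)=(u/|\xi|,v/|\xi|)$ gives $\|m(\xi,\cdot,\cdot)\|_{L^2(ds\,dt)}^2 = c(\alpha)$, independent of $\xi$, the constant $c(\alpha)$ being finite precisely for $1/2<\alpha<3/2$ (diagonal divergence if $\alpha\ge 3/2$, failure of decay at infinity if $\alpha\le 1/2$). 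Plancherel then yields $U^{\ast}U = c(\alpha)\,\mathrm{Id}$ on $L^2$, generalising the identity $\|S_1 f\|_2 = c\|f'\|_2$ of \cite{mv}.

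The core step is $\|Ug\|_{L^p(L^2)}\lesssim\|g\|_p$ for $1<p<\infty$, which I would prove by the vector-valued Calder\'on--Zygmund theorem with target Hilbert space $Y = L^2(ds\,dt)$. With the $L^2$ bound in place, the task reduces to the H\"ormander-type regularity
\[
\int_{|x|>2|h|}\Bigl(\iint |K(x-h,s,t)-K(x,s,t)|^2\,ds\,dt\Bigr)^{1/2}dx \;\lesssim\; 1,\qquad h\in\bbR,
\]
for the $Y$-valued convolution kernel
\[
K(x,s,t)\;=\;\frac{1}{|s-t|^\alpha}\bigl[\Delta_s N_\alpha(x)-\Delta_t N_\alpha(x)\bigr],
\]
where $\Delta_r g(x)=(g(x+r)-g(x))/r$ and $N_\alpha$ is a distributional representative of the Riesz potential kernel of order $\alpha$. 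This is the main technical obstacle. I would split the $(s,t)$-integration according to whether $\max(|s|,|t|)\ll|x|$ or $\max(|s|,|t|)\gtrsim|x|$. In the first regime, a Taylor expansion in $(s,t)$ gives the leading behaviour $K(x,s,t)\approx\tfrac12\operatorname{sgn}(s-t)|s-t|^{1-\alpha}N_\alpha''(x)$, so the estimate is reduced to integrability of $|s-t|^{2-2\alpha}$ on a bounded diagonal slab combined with the smoothness gain $|N_\alpha''(x-h)-N_\alpha''(x)|\lesssim|h||x|^{\alpha-4}$; the diagonal integrability is what forces $\alpha<3/2$. In the second regime the two difference quotients are controlled separately by $|\Delta_r N_\alpha(x)|\lesssim\min(|r|^{\alpha-1},|x|^{\alpha-1})$, and integrability of $|s-t|^{-2\alpha}$ on the complement $|s-t|\gtrsim|x|$ requires $\alpha>1/2$. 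In each region the $h$-difference contributes one extra order of decay in $|h|/|x|$, which sums to the required uniform bound.

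The reverse inequality $\|\cD^\alpha f\|_p\lesssim\|S_\alpha f\|_p$ comes essentially for free from the pointwise majorisation \eqref{Galpha-maj} together with the classical Marcinkiewicz/Stein characterisation $\|\cD^\alpha f\|_p\approx\|G_\alpha f\|_p$ (valid for $1<p<\infty$, $0<\alpha<2$, hence in particular on our range; see \cite{stein-sqf}):
\[
\|\cD^\alpha f\|_p \;\approx\;\|G_\alpha f\|_p \;\le\; C(\alpha)\,\|S_\alpha f\|_p.
\]
Alternatively, one may argue by duality from the $L^2$ identity $U^\ast U=c(\alpha)\,\mathrm{Id}$: for $g\in L^p\cap L^2$,
\[
c(\alpha)\,\|g\|_p \;=\;\|U^\ast Ug\|_p \;\le\;\|U^\ast\|_{L^p(L^2)\to L^p}\,\|Ug\|_{L^p(L^2)},
\]
and $\|U^\ast\|_{L^p(L^2)\to L^p}=\|U\|_{L^{p'}\to L^{p'}(L^2)}$ is finite by the upper bound applied at the conjugate exponent $p'\in(1,\infty)$.
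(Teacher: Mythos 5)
Your argument for the reverse inequality $\|\cD^\alpha f\|_p\lesssim\|S_\alpha f\|_p$ via the pointwise domination $G_\alpha f\lesssim S_\alpha f$ (Lemma \ref{lem3.1}) together with the known equivalence $\|\cD^\alpha f\|_p\approx\|G_\alpha f\|_p$ is correct and is a legitimate, shorter alternative to the paper's direct Littlewood--Paley reproducing-formula argument in \S\ref{lowerbounds}. The $L^2$ scaling computation and the identification of $1/2<\alpha<3/2$ as the sharp range are also correct.

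However, the core step of your upper-bound argument has a genuine gap. The H\"ormander-type regularity condition
\[
\int_{|x|>2|h|}\Bigl(\iint |K(x-h,s,t)-K(x,s,t)|^2\,ds\,dt\Bigr)^{1/2}dx \;\lesssim\; 1
\]
cannot hold. By the standard Benedek--Calder\'on--Panzone machinery, that condition together with the $L^2$ bound would give $H^1\to L^1$ boundedness of $U$, i.e.\ $\|S_\alpha f\|_{L^1}\lesssim\|\cD^\alpha f\|_{H^1}$. But the paper explicitly refutes this in \S\ref{hardyfailure}: for an odd smooth $f$ supported in $(-2,2)$ with $f\equiv 1$ on $[1/2,1]$, one has $\cD^\alpha f\in H^1$, yet $S_\alpha f(x)\gtrsim (x-1)^{-1}$ for $x>2$, so $S_\alpha f\notin L^1$ (not even in $L^{1,q}$ for any $q<\infty$). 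The introduction also states plainly that the operator ``is not covered by the standard theory in \cite{bcp} but should be considered as a rough singular integral in the spirit of \cite{christ}.'' Your sketch (``the $h$-difference contributes one extra order of decay in $|h|/|x|$, which sums'') must therefore break down: the paper's \S\ref{altLpproof} shows that after the natural dyadic decomposition of the $(s,t)$-plane into pieces $\cV_k^{n,l}$, $\cW_k^{n,\ell}$, the corresponding H\"ormander integrals are $O(1+|n|+|l|)$, not $O(1)$, and these are \emph{not summable} in $(n,l)$.

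What is actually needed, and what the paper does, is: (i) decompose the $(s,t)$-domain into the annular/off-diagonal pieces $\cV_k^{n,l}$ and $\cW_k^{n,\ell}$; (ii) prove refined $L^2$ estimates for each piece with exponentially decaying constants in $(n,l)$ (Proposition \ref{L2est}); and then either (iii-a) prove the endpoint $H^1\to L^{1,\infty}$ bound directly via a Chang--Fefferman-type atomic decomposition with a $\lambda$-dependent stopping time (\S\ref{hardy}), then interpolate for $1<p<2$ and dualize for $p>2$; or (iii-b) check H\"ormander conditions for each piece, accept the logarithmic growth, and interpolate piece-by-piece against the exponential $L^2$ gains to recover a summable $L^p$-bound (\S\ref{altLpproof}). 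In either route, the finer decomposition of the parameter domain and the refined $L^2$ estimates are indispensable; a single global H\"ormander estimate is not available.
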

In contrast   we have  the larger range
$\max \{1/p-1/2,0\}< \alpha<2$
in
the known equivalence
$\|\cD^{\alpha} \!f\|_{p} \approx \|G_\alpha f\|_{p}$ 
for the Marcinkiewicz square function, see \cite{strichartz}.

The proof of Theorem \ref{sobchar}  is immediately reduced to the
equivalence \[ \|f\|_{ L^p(\bbR)} \approx \|S_\alpha (\cD^{-\alpha}
\!f)\|_{L^p(\bbR)}\,. \]
It is natural to ask whether this result extends to $p=1$ in the
sense of a characterization for the homogeneous Hardy-Sobolev spaces
$\dot H^1_\alpha$. 
The vector-valued operator  associated with $f\mapsto S_{\alpha}(\cD^{-\alpha} f)$  is not covered by the standard theory in 
\cite{bcp} but should be considered as a rough singular integral in the spirit of \cite{christ}.

We let $H^1$ stand for the usual Hardy space on the line, that is,  for the set of functions $f$ in $L^1$ such that the Hilbert transform of $f$ is also in $L^1$.
It turns out that the strong $H^1\to L^1$ boundedness of $f \rightarrow S_{\alpha}(\cD^{-\alpha})f $ fails; this is in contrast to a positive result for the Marcinkiewicz square function, namely 
$\|G_\alpha(\cD^{-\alpha} f)\|_1\lc \|f\|_{H^1}$ for $1/2<\alpha<2$. See e.g. \cite[\S3.5.3]{triebel2}.
The following    $H^1\to L^{1,\infty}$
endpoint result for $f\mapsto S^\alpha(\cD^{-\alpha}\!f)$
is optimal in the sense that $L^{1,\infty}$ cannot be replaced by a Lorentz space $L^{1,q}$ with $q<\infty$, see
\S\ref{hardyfailure}.


\begin{theorem}\label{sobhardy}
Let  $1/2<\alpha<3/2$. Then for all $f$ in the homogeneous
Hardy-Sobolev space $\dot H^1_\alpha$  and all $\la>0$,
$$\meas (\{x: S_\alpha f(x)>\lambda \} )\le C_\alpha \la^{-1}\|\cD^\alpha \!f\|_{H^1}.
$$
\end{theorem}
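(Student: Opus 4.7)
The theorem rewrites, via the substitution $F = \cD^\alpha f$, as a weak-type $H^1\to L^{1,\infty}$ endpoint for the vector-valued operator
\[
\mathbf{T}F(x) := \Big(\Delta_s I^\alpha F(x) - \Delta_t I^\alpha F(x)\Big)_{(s,t)\in\bbR^2}, \qquad \Delta_h g(x) := \tfrac{g(x+h)-g(x)}{h},
\]
taking values in the Hilbert space $\cH := L^2(\bbR^2, |s-t|^{-2\alpha}ds\,dt)$, so that $\|\mathbf{T}F(x)\|_\cH = S_\alpha f(x)$. Theorem~\ref{sobchar} supplies the strong bound $\mathbf{T}\colon L^2(\bbR)\to L^2(\bbR;\cH)$. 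Because the $H^1\to L^1(\cH)$ estimate is known to fail (\S\ref{hardyfailure}), the $\cH$-valued convolution kernel of $\mathbf{T}$ cannot satisfy the H\"ormander regularity condition, so the vector-valued Calder\'on--Zygmund theory of \cite{bcp} is inapplicable. The plan is to argue in the spirit of Christ's analysis of rough singular integrals \cite{christ}.

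Apply a Calder\'on--Zygmund decomposition of $F$ at height $\lambda$, writing $F = G + \sum_j b_j$ with $b_j$ supported on pairwise disjoint intervals $Q_j$ of length $r_j=2^{m_j}$, $\int b_j=0$, $\|b_j\|_1\lc\lambda|Q_j|$, $\|G\|_2^2\lc\lambda\|F\|_{H^1}$, and $\sum|Q_j|\lc\lambda^{-1}\|F\|_{H^1}$. Chebyshev and the $L^2$ bound dispose of the good part; after removing the exceptional set $\bigcup 2Q_j$ (of measure $\lc\lambda^{-1}\|F\|_{H^1}$) it remains to show
\[
\sum_j\int_{\bbR\setminus 2Q_j}\|\mathbf{T}b_j(x)\|_\cH\,dx \lc \sum_j\|b_j\|_1.
\]
Split $\mathbf{T} = \sum_{k\in\bbZ}\mathbf{T}_k$ by restricting $(s,t)$ to $\max(|s|,|t|)\in[2^k,2^{k+1})$; since $\|\mathbf{T}b_j\|_\cH\le\sum_k\|\mathbf{T}_k b_j\|_\cH$, each scale may be estimated separately. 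Two mechanisms are needed according to whether $k$ is below or above $m_j$.

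For small kernel scales $k\le m_j$, work in the regime $|z|\gg 2^k$ and Taylor-expand the Riesz-based kernel $K_{s,t}(z)=c_\alpha[(|z+s|^{\alpha-1}-|z|^{\alpha-1})/s-(|z+t|^{\alpha-1}-|z|^{\alpha-1})/t]$ to extract a leading term of order $|z|^{\alpha-3}(s-t)$. The annulus integral $\iint_{\max(|s|,|t|)\approx 2^k}(s-t)^{2-2\alpha}ds\,dt\lc 2^{(4-2\alpha)k}$ (needing $\alpha<3/2$ for integrability at the diagonal $s=t$) then yields $\|\partial_z\mathbf{K}_k(z)\|_\cH\lc|z|^{\alpha-4}2^{(2-\alpha)k}$, and the cancellation $\int b_j=0$ gives
\[
\int_{\bbR\setminus 2Q_j}\|\mathbf{T}_k b_j\|_\cH\,dx \lc \|b_j\|_1\cdot r_j\cdot 2^{(2-\alpha)k}\int_{r_j}^\infty |x|^{\alpha-4}\,dx \lc \|b_j\|_1\cdot 2^{(k-m_j)(2-\alpha)},
\]
summable over $k\le m_j$. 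For large kernel scales $k\ge m_j$ the H\"ormander bound fails, and one passes to Plancherel. A scaling computation shows that the $\cH$-valued Fourier symbol of $\mathbf{T}_k$ has squared $\cH$-norm equal to $A(2^k|\xi|)$, where $A(R)\approx R^{4-2\alpha}$ for $R\ll 1$ and $A(R)\approx R^{-2\alpha}$ for $R\gg 1$ (both requiring $1/2<\alpha<3/2$), so that $\mathbf{T}_k$ behaves like a Littlewood--Paley piece at frequency $\approx 2^{-k}$. Combined with the atomic Fourier bound $|\widehat{b_j}(\xi)|\lc\|b_j\|_1\min(1,r_j|\xi|)$, this yields $\|\mathbf{T}_k b_j\|_{L^2(\cH)}^2\lc\|b_j\|_1^2\cdot 2^{-k}\cdot 2^{(m_j-k)(2\alpha-1)}$. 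Splitting $\bbR\setminus 2Q_j$ at $|x-x_{Q_j}|\sim 2^k$: on the inner shell (of measure $\lc 2^k$) Cauchy--Schwarz gives $\|b_j\|_1\cdot 2^{(m_j-k)(\alpha-1/2)}$, while on the outer region $|x-x_{Q_j}|\gg 2^k$ the Taylor kernel estimate from the small-scale analysis still applies and contributes $\|b_j\|_1\cdot 2^{m_j-k}$. Both are summable over $k\ge m_j$ when $\alpha>1/2$.

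The genuinely hard piece is the large-scale regime: since the $\cH$-valued kernel is rough, one must exploit the Littlewood--Paley/Plancherel structure of $\mathbf{T}_k$ rather than pointwise kernel smoothness, and the precise asymptotics of $A(R)$ require carefully tracking the cancellation between the two Riesz-difference terms uniformly in $(s,t)$ on each dyadic annulus, via the combined bound $|m_{u,v}(1)|^2\lc\min((u-v)^2/\max(|u|,|v|)^2,\max(|u|,|v|)^{-2})$ for the symbol. The hypothesis $1/2<\alpha<3/2$ of the theorem then appears as a sharp two-sided constraint: $\alpha>1/2$ is needed for large-scale Fourier summability in the rough regime, while $\alpha<3/2$ is needed for the near-diagonal integrability of $|s-t|^{2-2\alpha}$ and the Taylor estimate at small scales.
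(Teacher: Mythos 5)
Your overall framework—reduce to a vector-valued operator $\mathbf{T}$ mapping into $\cH=L^2(|s-t|^{-2\alpha}ds\,dt)$, decompose by dyadic scale in $(s,t)$, treat small scales by a kernel-gradient/cancellation argument and large scales by Plancherel—is reasonable in spirit and does correctly identify the mechanisms by which the two endpoints $\alpha=1/2$ and $\alpha=3/2$ appear. However, there are two linked gaps that make the argument fail. First, the claimed asymptotics $A(R)\approx R^{-2\alpha}$ for the squared $\cH$-norm of the symbol of $\mathbf{T}_k$ at $R=2^k|\xi|\gg 1$ are wrong. Splitting the $(\sigma,\tau)$-integral on the dyadic annulus $\max(|\sigma|,|\tau|)\approx R$ into the near-diagonal region $|\sigma-\tau|\lc 1$ (where $|m(\sigma)-m(\tau)|\lc|\sigma-\tau|/R$) and its complement shows that the near-diagonal contribution is $\approx R^{-1}$, while the contribution from $|\tau|\lc 1$ is $\approx R^{1-2\alpha}$; hence $A(R)\approx R^{-\min(1,\,2\alpha-1)}$, never as small as $R^{-2\alpha}$. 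As a consequence, the high-frequency part of the Plancherel integral $\int A(2^k\xi)|\widehat{b_j}(\xi)|^2\,d\xi$ diverges: for $|\xi|\gg r_j^{-1}$ the only available bound from a Calder\'on--Zygmund bad piece is $|\widehat{b_j}(\xi)|\lc\|b_j\|_1$ (there is \emph{no} $L^2$ control of $b_j$), and $\int^\infty A(R)\,dR$ is infinite throughout the range $1/2<\alpha<3/2$. Even granting an $H^1$-atomic decomposition (which would add $\|a_j\|_2\lc r_j^{-1/2}$), the resulting bound on $\|\mathbf{T}_k a_j\|_{L^2(\cH)}^2$ is only $\lc 2^{-k}$ with no geometric gain in $k-m_j$, so the sum $\sum_{k\ge m_j}2^{k/2}\|\mathbf{T}_k a_j\|_{L^2(\cH)}$ diverges.

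There is also a structural red flag: your decomposition uses only $\|b_j\|_1\lc\lambda|Q_j|$, $\int b_j=0$, and the bound $\|G\|_2^2\lc\lambda\|F\|_1$, all of which hold for the CZ decomposition of a general $L^1$ function. If the argument were correct it would establish the weak type $(1,1)$ inequality $\|S_\alpha(\cD^{-\alpha}F)\|_{L^{1,\infty}}\lc\|F\|_1$, which the paper explicitly records as an open problem at the end of \S\ref{weaktype11failure}. The paper's proof of Theorem~\ref{sobhardy} uses genuinely more: it never applies the classical CZ decomposition. Instead it runs the Chang--Fefferman square-function atomic decomposition of $H^1$, producing frequency-localized building blocks $b_k^{\mu,I}$ (\S\ref{preliminaries}) that come with $L^2$ control tied to the level sets of the Peetre maximal square function via $\sum_{k,J}\|\bbone_J L_kf\|_2^2\lc 2^{2\mu}|\cO_\mu|$. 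The operator itself is first split into Littlewood--Paley pieces $P_kT_kL_k$ so that each frequency band is handled separately, and then the $(s,t)$-plane is subdivided into the regions $\cV_k^{n,l}$, $\cW_k^{n,\ell}$ (\S\ref{L2sect}) with the refined $L^2$ bounds of Proposition~\ref{L2est} providing the geometric decay in $n$ and $l$ that replaces the false asymptotics $A(R)\approx R^{-2\alpha}$. The stopping-time exceptional set built from $\gamma_{\mu,I}$ and $\wt\vk(\mu,I)$ then plays the role that $\bigcup 2Q_j$ plays in your scheme, but with the crucial extra information that allows the $L^2$ and $L^1$ estimates of $U_i$, $V_*$, $W_{2,*}$, $W_1$ to close. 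Your proposal is missing both the frequency localization of the operator and the square-function control of the bad parts, and without these the large-scale regime does not close.
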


{The statement above for $\alpha =1$ can be restated in
terms of the first derivative, using the Hilbert transform.}
\begin{corollary}\label{sob1hardy}
(i) For $f\in L^p$, $1<p<\infty$, $\|S_1 f\|_p \approx \|f'\|_p$.

(ii) If $f'\in H^1$ then $\meas (\{x: S_1 f(x)>\lambda \} )\le C
\la^{-1}\| f' \|_{H^1}. $
\end{corollary}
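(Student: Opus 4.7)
The plan is to derive both parts of the corollary as direct consequences of Theorem \ref{sobchar} and Theorem \ref{sobhardy} specialized to $\alpha=1$, by reinterpreting the operator $\cD^1$ in terms of the ordinary derivative and the Hilbert transform $H$.

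The key observation is that on the Fourier side
\[
\widehat{\cD^1 f}(\xi)=|\xi|\whf(\xi)=-i\,\sgn(\xi)\cdot i\xi\,\whf(\xi)=-i\,\sgn(\xi)\widehat{f'}(\xi),
\]
so that $\cD^1 f = -i H(f')$ (up to the normalization of $H$ used on the line). Thus the operator $\cD^1$ is essentially the composition of differentiation with the Hilbert transform, and the comparison of their $L^p$ and $H^1$ norms is classical.

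For part (i), I would use the $L^p$ boundedness of $H$ for $1<p<\infty$ to conclude
\[
\|\cD^1 f\|_p=\|Hf'\|_p\approx \|f'\|_p,
\]
the lower bound following by applying $H^{-1}=-H$ to $Hf'$. Combining with Theorem \ref{sobchar} at $\alpha=1$ yields $\|S_1 f\|_p\approx\|f'\|_p$.

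For part (ii), assume $f'\in H^1$. The Hilbert transform is bounded on $H^1$: indeed if $g\in H^1$ then $Hg\in L^1$ with $\|Hg\|_1\le\|g\|_{H^1}$, and moreover $H(Hg)=-g\in L^1$, so $Hg\in H^1$ with $\|Hg\|_{H^1}\lesssim\|g\|_{H^1}$. Applying this with $g=f'$ gives
\[
\|\cD^1 f\|_{H^1}=\|Hf'\|_{H^1}\lesssim\|f'\|_{H^1}.
\]
Theorem \ref{sobhardy} at $\alpha=1$ then yields
\[
\meas(\{x:S_1f(x)>\la\})\le C\la^{-1}\|\cD^1 f\|_{H^1}\lesssim\la^{-1}\|f'\|_{H^1},
\]
which is (ii). There is no real obstacle here; the only point that requires some care is identifying $\cD^1$ with $\pm iH\circ\partial_x$ on an appropriate dense class (e.g.\ Schwartz functions with Fourier support away from the origin) and then extending by density, which is standard.
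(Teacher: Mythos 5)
Your proof is correct and follows exactly the route the paper indicates: the paper gives no written proof of the corollary, only the remark that the $\alpha=1$ case should be restated in terms of $f'$ via the Hilbert transform. Your identification $\cD^1 f=-iHf'$ on the Fourier side together with the $L^p$ ($1<p<\infty$) and $H^1$ boundedness (in fact, isometry) of $H$ is precisely the intended argument, so there is nothing to add.
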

In \S\ref{weaktype11failure} {we show} that the condition
$f'\in H^1$ in the second part of Corollary \ref{sob1hardy} cannot
be replaced by $f'\in L^1$, and formulate a related open question for the Riesz derivatives.

\medskip

We  shall also consider  a local version of the square
function~$S_{1}$ in order  to study pointwise differentiability.
Recall that a bounded function~$f\colon \mathbb{R}\to \mathbb{R}$ is
in the Zygmund class~$\Lambda_{*}$  (also known as the homogeneous
Besov space $\dot B^{\infty,\infty}_1$) if there exists a
constant~$c=c(f)>0$ such that $$\text{$|f(x+h)+f(x-h)-2f(x)|\le
c|h|$, $x,h\in\mathbb{R}$.}$$ The infimum of the constants~$c$
satisfying the above inequality is denoted by $\|f\|_{\Lambda_{*}}$.
Functions in the Zygmund class may be nowhere differentiable. For
example, the Weierstrass function $ f(x)=\sum^{\infty}_{n=1}b^{-n}
\cos (b^{n}x),
$ where $b>1$,  is nowhere differentiable  and belongs to $\La_*$.
{It turns out that a local version of $S_1$ characterizes
the almost everywhere differentiability of functions in the Zygmund
class, very much in the spirit of a classical theorem of Stein and
Zygmund \cite{stein-zyg} which uses a local version of the Marcinkiewicz square function $G_1$. Our result reads as follows.}
\begin{theorem}\label{ptwintro}
Let $f:\bbR\to \bbR$ belong to the Zygmund class $\Lambda_{*}$. Then
the set of points $x\in\mathbb{R}$ such that
$$
\iint_{|t|+|s|<1} \Big| \frac{f(x+s)-f(x)}{s}
-\frac{f(x+t)-f(x)}{t}\Big|^{2}\frac{ds\,dt}{|s-t|^{2}}<  \infty
$$
coincides,  except possibly for a set of Lebesgue measure zero, with
the set of points where $f$ is differentiable.
\end{theorem}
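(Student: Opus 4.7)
The plan is to prove the two inclusions modulo null sets. The easier direction, that finiteness of the localized square function at $x$ implies differentiability of $f$ at $x$ for a.e.\ such $x$, follows by combining the pointwise inequality \eqref{Galpha-maj} at $\alpha=1$ with the classical Stein--Zygmund theorem \cite{stein-zyg}. Inspection of the proof of \eqref{Galpha-maj} in \S\ref{ptsect} should yield a localized version of the form $G_1^{loc} f(x) \le C\, S_1^{loc} f(x) + R(x)$, where $R(x)$ gathers contributions from $|s|\vee|t|\ge 1$ and is finite everywhere since $f \in \Lambda_* \subset L^\infty_{\loc}$. Hence $\{S_1^{loc} f < \infty\} \subseteq \{G_1^{loc} f < \infty\}$, and \cite{stein-zyg} identifies the latter with the differentiability set modulo a null set.

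For the converse, I would use a Lusin--Whitney decomposition. Let $E$ be the set of differentiability points of $f$. By Lusin and Egorov, $E$ equals, up to a null set, a countable union of compact sets $K_n$ on each of which $f'$ is continuous and the quotients $\Delta_h f(x)$ converge to $f'(x)$ uniformly as $h \to 0$. On $K_n$ this uniform differentiability is exactly the hypothesis of the Whitney $C^1$-extension theorem, which furnishes a compactly supported $g_n \in C^1(\bbR)$ with $g_n = f$ and $g_n' = f'$ on $K_n$. Since $g_n' \in L^2$, Theorem~\ref{sobchar} at $p=2$ gives $\|S_1 g_n\|_2 \approx \|g_n'\|_2 < \infty$, so $S_1 g_n(x) < \infty$ at a.e.\ $x$. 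Writing $f = g_n + h_n$ with $h_n := f - g_n$, the pointwise triangle inequality $S_1^{loc} f(x) \le S_1^{loc} g_n(x) + S_1^{loc} h_n(x)$ reduces matters to showing that $S_1^{loc} h_n(x) < \infty$ at a.e.\ Lebesgue density point $x$ of $K_n$.

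The residual $h_n$ lies in $\Lambda_*$, vanishes on $K_n$, and satisfies $h_n'(x) = 0$ at every $x \in K_n$. I would prove the required bound by a dyadic decomposition of the domain of integration into annuli $\max(|s|, |t|) \asymp 2^{-k}$, separating within each annulus the off-diagonal region $|s-t| \asymp 2^{-k}$ from the near-diagonal region $|s - t| \ll 2^{-k}$. The off-diagonal region is controlled by $|\Delta_s h_n(x)|$ and $|\Delta_t h_n(x)|$ individually, which one estimates by choosing $s^* \in K_n - x$ close to $s$ and combining $h_n(x+s^*)=0$ with the Zygmund log-Lipschitz bound $|h_n(x+s) - h_n(x+s^*)| \le C|s-s^*|(1 + \log^+(1/|s-s^*|))$. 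The near-diagonal region requires exploiting the cancellation in $\Delta_s h_n - \Delta_t h_n$, for which one writes $\Delta_s h_n(x) - \Delta_t h_n(x) = s^{-1}\bigl(h_n(x+s) - h_n(x+t)\bigr) - (s-t)\,s^{-1}\,\Delta_t h_n(x)$ and again invokes the Zygmund estimate on the first term.

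The main obstacle is the near-diagonal piece: Lebesgue density alone gives no quantitative rate on $|\Delta_s h_n(x)|$, whereas the integral requires summable decay of the form $|\Delta_s h_n(x)| = O(|s|^\delta)$ or analogous $L^2$-integrability. I would pass this obstacle by a further refinement of the Lusin decomposition, splitting each $K_n$ according to quantitative density rates (removing another null set) so that $\Delta_s h_n(x)$ admits a polynomial bound at a.e.\ point; alternatively, by integrating $S_1^{loc} h_n(x)^2$ against $\mathbbm{1}_{K_n}(x)$ and invoking a Plancherel-type identity parallel to $\|S_1 f\|_2 \approx \|f'\|_2$, one can bound the total contribution in terms of quantities controlled by $h_n' \equiv 0$ on $K_n$. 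This step is the genuine analogue of the more subtle half of the Stein--Zygmund argument, complicated here by the additional near-diagonal singularity intrinsic to $S_1$.
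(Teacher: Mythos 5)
Your easy direction (finiteness $\Rightarrow$ differentiability a.e.) is fine and coincides with the paper's: localize, invoke the pointwise bound $G_1f\lesssim S_1 f$ (Lemma~\ref{lem3.1}), note that the Zygmund condition supplies the companion hypothesis \eqref{eq58}, and apply Stein--Zygmund.

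For the converse your framework is genuinely close to the paper's: a Lusin-type reduction to a compact $F$ on which $f$ is well-behaved, subtraction of a Lipschitz approximant so that the residual $b$ vanishes on $F$ and inherits the Zygmund log-Lipschitz bound (the paper's Lemma~\ref{zyglemma}), and the algebraic identity
\[
\frac{b(x+s)-b(x)}{s}-\frac{b(x+t)-b(x)}{t}
=\frac{b(x+s)-b(x+t)}{s}-\frac{s-t}{s}\cdot\frac{b(x+t)-b(x)}{t}
\]
(the paper's \eqref{eq2}, written with $m=s/t$) to extract the $(s-t)$ factor near the diagonal. The paper uses a single Lipschitz $g$ from Lusin rather than a Whitney $C^1$ extension $g_n$, but that difference is inessential.

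The gap you name at the near-diagonal piece is real, and your two proposed repairs do not close it. The ``Plancherel against $\mathbbm{1}_{K_n}$'' idea fails because $\|S_1 b\|_2\approx\|b'\|_2$ requires $b'\in L^2$ globally, and a Zygmund-class residual need not have a square-integrable derivative; integrating $S_1^{\mathrm{loc}}b(x)^2\mathbbm{1}_{K_n}(x)$ does not decouple the contribution of $b'$ off $K_n$. The ``refined Lusin by density rates'' idea is exactly the right instinct, but the concrete tool one needs is the Marcinkiewicz integral lemma: for a closed set $F$ of positive measure and $\lambda>0$,
\[
I^{(\lambda)}(x)=\int_{x-1}^{x+1}\frac{\operatorname{dist}^{\lambda}(y,F)}{|x-y|^{1+\lambda}}\,dy<\infty
\quad\text{for a.e.\ }x\in F,
\]
equivalently $\sum_j |I_j|^{1+\lambda}/|x-x_j|^{1+\lambda}<\infty$ a.e.\ on $F$ when $\{I_j\}$ is the Whitney decomposition of $\bbR\setminus F$. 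This is what converts ``a.e.\ Lebesgue density point'' into a summable quantitative rate, and it is the workhorse of the paper's estimate: after substituting $s=mt$, the square function of $b$ is split into pieces $A,C,D=\sum_j(E_j+F_j)$, $F_j=G_j+H_j$ indexed by the Whitney intervals, each of which is bounded by $\sum_j |I_j|^{1+\lambda}/|x-x_j|^{1+\lambda}$ for some $\lambda\in(0,1]$, using the vanishing of $b$ on $F$, the Lipschitz bound $|b(x+t)|\lesssim\operatorname{dist}(x+t,F)$, the Zygmund log-Lipschitz bound, and careful bookkeeping of which interval $x+mt$ lands in. Without the Marcinkiewicz ingredient the argument cannot be completed, so what you have is a correct skeleton with one essential bone missing.
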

In view of the pointwise inequality $G_1f\lc S_1f$ 
the main point of  Theorem \ref{ptwintro} is that almost everywhere the  pointwise differentiability for functions in the Zygmund class implies  the finiteness  of the rough square function. For general  functions in $L^2_\loc$  this implication fails, see \S\ref{ptfailure}.
Theorem \ref{ptwintro} will be obtained as  a simple consequence of a more general
result formulated as  Theorem \ref{theo8.2}.


{\it This paper.} In \S\ref{comm} we discuss the connection with
quadratic symmetrizations of Calder\'on commutators and with the
Cauchy integral. In \S\ref{ptsect} we 
prove a generalization of the pointwise majorization result \eqref{Galpha-maj}.
In \S\ref{necessary} we briefly discuss necessary
conditions for Theorems~\ref{sobchar} and~\ref{sobhardy}. In
\S\ref{lowerbounds} we prove the {lower bound  in
Theorem~\ref{sobchar}, namely, $\|\cD^\alpha\! f\|_p \lc \|S_\alpha
f\|_p$, $1<p<\infty$.} In \S\ref{L2sect} we discuss basic
decompositions of our operators and prove some refined $L^2$ bounds
that are crucial for the proofs of Theorems \ref{sobchar} and
\ref{sobhardy}. In \S\ref{hardy} we prove the endpoint Theorem
\ref{sobhardy}. In \S\ref{Lpsect} we quickly discuss various
approaches to  Theorem \ref{sobchar} via interpolation arguments. In
\S\ref{sec8} we state and prove the results on pointwise
differentiability.



 \section{Relation with Calder\'on commutators}\label{comm}
  For suitable  functions {$A:\bbR\to \bbC$ } consider  the first  Calder\'on commutator $\cC_A$
whose Schwartz kernel $\cK_A$ is given by
 $$\cK_A(x,y)=p.v.  \frac{A(y)-A(x)}{(x-y)^2}.$$
Calder\'on \cite{calderon} proved  the $L^2(\bbR)$ boundedness of
$\cC_A$  for Lipschitz functions $A$; subsequently   many other
proofs were discovered. Here we are motivated by the proof in
\cite{verdera}  which uses a symmetrization technique based on
 the three term {\it quadratic symmetrization}
\begin{multline} \label{Symdef}\text{Sym}[\cK_A](x,y,z)\\
:=\cK_A(x,y)\cK_A(x,z))+ \cK_A(y,z)\cK_A(y,x)+\cK_A(z,x)\cK_A(z,y)
 {\color{red},}\end{multline}
which is well defined as a function  on $$G=\{(x,y,z)\in
\bbR\times\bbR\times\bbR: \, x\neq y, \, x\neq z,\, y\neq z\}.$$ We
have the following elementary but crucial identity (\cite{verdera}).
\begin{lemma} \label{SymKformulalemma}
For $(x,y,z)\in G$, \Be   \label{SymKformula}
 \text{\rm Sym}[\cK_A](x,y,z)=\frac1{(z-y)^2}\Big(\frac{A(y)-A(x)}{y-x}- \frac{A(z)-A(x)}{z-x}\Big)^2\,.
\Ee
\end{lemma}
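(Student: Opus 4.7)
The plan is to verify the identity \eqref{SymKformula} by a direct algebraic computation, exploiting the fact that $\cK_A$ is antisymmetric in its arguments. Indeed, writing $A(y)-A(x)=(y-x)[xy]$ with $[xy]:=(A(y)-A(x))/(y-x)$ denoting the first divided difference, we immediately obtain
$$\cK_A(x,y)=\frac{[xy]}{y-x},\qquad \cK_A(y,x)=\frac{[xy]}{x-y}=-\cK_A(x,y),$$
which already shows that $\text{Sym}[\cK_A]$ is invariant under any permutation of $(x,y,z)$ (the cyclic invariance is built in, and a transposition flips the sign of each factor $\cK_A$, hence leaves each product unchanged). So it suffices to verify the identity in one fixed ordering.

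Introduce the shorthand $a=[xy]$, $b=[xz]$, $c=[yz]$, and recall the second divided difference
$$\phi:=[xyz]=\frac{b-a}{z-y}=\frac{c-a}{z-x}=\frac{c-b}{y-x},$$
all three expressions being equal in view of the telescoping identity $(y-x)a+(z-y)c=(z-x)b$, which is nothing but $(A(y)-A(x))+(A(z)-A(y))=A(z)-A(x)$. With this notation, the right-hand side of \eqref{SymKformula} is exactly $\phi^2$. After substituting the formulas for each $\cK_A$ and collecting over the common denominator $D=(y-x)(z-x)(z-y)$, the left-hand side reads
$$\text{Sym}[\cK_A](x,y,z)=\frac{N}{D},\qquad N:=ab(z-y)-ac(z-x)+bc(y-x).$$
Thus the lemma reduces to the polynomial identity $N=(y-x)(z-x)(z-y)\,\phi^2$.

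To prove this identity I would substitute $b=a+(z-y)\phi$ and $c=a+(z-x)\phi$ into $N$ and expand as a polynomial in $a$ and $\phi$. The coefficient of $a^2$ is $(z-y)-(z-x)+(y-x)=0$; the coefficient of $a\phi$ is $(z-y)^2-(z-x)^2+(y-x)(2z-x-y)$, which vanishes via $(z-y)^2-(z-x)^2=(x-y)(2z-x-y)$; and the coefficient of $\phi^2$ is exactly $(y-x)(z-y)(z-x)$. Dividing by $D$ then yields $\text{Sym}[\cK_A](x,y,z)=\phi^2=(a-b)^2/(z-y)^2$, which is the asserted formula.

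The only obstacle is bookkeeping of signs—there is no conceptual subtlety. An alternative, slightly more symmetric presentation would keep all three divided differences $a,b,c$ as independent variables subject only to the linear constraint $(y-x)a-(z-x)b+(z-y)c=0$, and then verify by elimination that $N$ coincides with $(y-x)(z-x)(z-y)\cdot[(a-b)/(z-y)]^2$ modulo this constraint; this is simply a cleaner way to organize the same calculation.
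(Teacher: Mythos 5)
Your proof is correct, and the key algebraic identity is verified properly: writing $\mathrm{Sym}[\cK_A]=N/D$ with $D=(y-x)(z-x)(z-y)$ and $N=ab(z-y)-ac(z-x)+bc(y-x)$ is right, and the substitution $b=a+(z-y)\phi$, $c=a+(z-x)\phi$ does indeed kill the $a^2$ and $a\phi$ coefficients and leaves $\phi^2$ with coefficient exactly $D$.

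Both you and the paper prove the lemma by direct algebraic expansion, but the organization differs. The paper clears the full denominator $(x-y)^2(x-z)^2(y-z)^2$, expands everything in terms of the first differences $D_{ab}=A(a)-A(b)$, and then recognizes the resulting sextic polynomial as the perfect square $\bigl((z-x)D_{yx}-(y-x)D_{zx}\bigr)^2$ through a somewhat lengthy regrouping. You instead parametrize with the first and second divided differences $a,b,c,\phi$, clear only the cubic denominator $D$, and reduce the identity to the single substitution $b=a+(z-y)\phi$, $c=a+(z-x)\phi$; the verification then collapses to three trivial coefficient checks. This is genuinely cleaner bookkeeping and makes it transparent \emph{why} the answer is a perfect square: the numerator $N$, viewed as a function of $(a,\phi)$, has no $a$-dependence at all because the linear constraint forces it, and the only surviving term is $D\phi^2$.

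One small inaccuracy in the preamble: a transposition of two arguments of $\mathrm{Sym}$ does not "flip the sign of each factor $\cK_A$"; it permutes the three summands among themselves (e.g.\ $y\leftrightarrow z$ fixes the first summand and swaps the second and third). The conclusion — that $\mathrm{Sym}$ is $S_3$-invariant — is still true, but the stated reason is off. In any case this remark is not used in the actual computation, so the proof stands.
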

\begin{proof} We use the notation $D_{ab}:=A(a)-A(b)$. For  all $(x,y,z)$ we compute
\begin{align*}
&(x-y)^2(x-z)^2(y-z)^2 \, \text{Sym}[\cK_A](x,y,z)
\\
=&(y-z)^2D_{xy}D_{xz}+(z-x)^2 D_{yz}D_{yx}+(x-y)^2 D_{zx}D_{zy}
\\
=&x^2D_{yz}(D_{yx}-D_{zx}) +y^2 D_{zx}(D_{zy}-D_{xy})+z^2
D_{xy}(D_{xz}-D_{yz})
\\&\qquad \qquad\qquad\qquad- 2yz D_{xy}D_{xz}-2zx D_{yz}D_{yx}-2xyD_{zx}D_{zy}
\end{align*}
and using $D_{ac}-D_{bc}=D_{ab} $ we see that $(x-y)^2(x-z)^2(y-z)^2
\, \text{Sym}[\cK_A](x,y,z)$ is equal to
\begin{subequations}
\Be
 \label{firstalg}
x^2D^2_{yz}+y^2 D^2_{zx}+z^2D^2_{xy}-2xy
D_{xz}D_{yz}-2xzD_{xy}D_{zy}-2yzD_{yx}D_{zx}. \Ee
Now  it turns out that this expression is also equal to
\Be\label{secondalg}
\big( (z-x) D_{yx}-(y-x) D_{zx}) \big)^2.\Ee
\end{subequations}
Indeed the last display equals
\begin{align*}
&(z-x)^2 D^2_{yx}+(y-x)^2 D_{zx}^2-2(z-x)(y-x) D_{yx}D_{zx}
\\&= z^2D_{yz}^2+y^2 D^2_{zx}-2zyD_{yx}D_{zx} +R
\end{align*}
where $R= -2xz(D^2_{yx}-D_{yx}D_{zx})-2xy(D^2_{zx}-D_{yx}D_{zx})-
x^2 (D_{yx}-D_{zx})^2$. Now use $D_{yx}-D_{zx}=D_{yz}$ and conclude
that \eqref{firstalg} and \eqref{secondalg} coincide. This  yields
the asserted formula.
\end{proof}

Using  Lemma \ref{SymKformulalemma}  the result of Theorem
\ref{sobchar} can now be written in terms of the quadratic
symmetrization:
\begin{corollary} For $1<p<\infty$, $1/2<\alpha< 3/2$,
 $$\|\cD^\alpha\! A\|_p^p\approx  \int \Big(\iint \frac{ |\text{\rm Sym}[\cK_A](x,y,z)| }{|y-z|^{2\alpha-2}}dy\, dz\Big)^{p/2}\,dx.
$$
 \end{corollary}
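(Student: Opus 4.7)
The plan is to observe that the corollary is essentially a direct translation of Theorem \ref{sobchar} via the explicit formula of Lemma \ref{SymKformulalemma}. The work is a change of variables; no new analytic content is needed.

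First I would apply Lemma \ref{SymKformulalemma}, which gives
\[
|\text{Sym}[\cK_A](x,y,z)| = \frac{1}{(z-y)^2}\Big(\frac{A(y)-A(x)}{y-x}-\frac{A(z)-A(x)}{z-x}\Big)^2,
\]
the absolute value being harmless since the right-hand side is a square divided by a positive quantity. Dividing by $|y-z|^{2\alpha-2}$ gives
\[
\frac{|\text{Sym}[\cK_A](x,y,z)|}{|y-z|^{2\alpha-2}} = \frac{1}{|y-z|^{2\alpha}}\Big(\frac{A(y)-A(x)}{y-x}-\frac{A(z)-A(x)}{z-x}\Big)^2.
\]

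Next I would make the change of variables $y=x+s$, $z=x+t$ in the inner double integral (Jacobian one, and $y-z=s-t$). The expression becomes
\[
\iint_{\bbR\times\bbR} \frac{1}{|s-t|^{2\alpha}}\Big(\frac{A(x+s)-A(x)}{s}-\frac{A(x+t)-A(x)}{t}\Big)^2 ds\,dt,
\]
which by definition \eqref{squarefct} equals $S_\alpha A(x)^2$. Raising to the $p/2$ power and integrating in $x$ gives
\[
\int \Big(\iint \frac{|\text{Sym}[\cK_A](x,y,z)|}{|y-z|^{2\alpha-2}}\, dy\, dz\Big)^{p/2} dx = \|S_\alpha A\|_p^p.
\]

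Finally I would invoke Theorem \ref{sobchar} to conclude $\|S_\alpha A\|_p^p \approx \|\cD^\alpha A\|_p^p$ in the stated range $1<p<\infty$, $1/2<\alpha<3/2$. Since the argument is purely algebraic plus a reference to a previously established theorem, there is no serious obstacle; the only thing to be careful about is that the singular set $\{s=0\}\cup\{t=0\}$ has measure zero so the change of variables is harmless, and that the identity in Lemma \ref{SymKformulalemma} was verified precisely on the set $G$ where the kernel is defined.
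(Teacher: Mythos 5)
Your proposal is correct and follows exactly the paper's intended route: the paper presents this corollary as a direct rewriting of Theorem~\ref{sobchar} via the algebraic identity of Lemma~\ref{SymKformulalemma} and the affine change of variables $y=x+s$, $z=x+t$. Nothing further is needed.
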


As mentioned before, for $\alpha=1$, $p=2$ the equivalence of norms
becomes an identity (noted in \cite{mv}). Indeed \eqref{SymKformula}
and  a  Fourier transform calculation using Plancherel's theorem
yield \Be \label{L2identity}\iiint_{\bbR\times\bbR\times\bbR}
|\text{Sym}[\cK_A](x,y,z)| dx\, dy\, dz=c\int_\bbR |A'(x)|^2 dx. \Ee
This argument can also be applied to  the cases
$1/2<\alpha<3/2$. In \cite{verdera} it is explained  how
\eqref{L2identity} can be used to prove the $L^2$ boundedness of
$\cC_A$ when $A$ is Lipschitz: {one checks the
assumptions of the $T(1)$ theorem of David and Journ\'e. In fact,
the $T(1)$ can be bypassed by a simple argument, which reduces
matters to the $H^1-BMO$ duality.}

Moreover,  in \cite{verdera} it is shown   that the action of the
Cauchy-integral operator for the Lipschitz graph on characteristic
functions of intervals is  majorized by the action of the  first
Calder\'on commutator. The argument uses crucially the concept of
{\it Menger curvature}  which is controlled by $\text{Sym}[K_A]$. To
be specific, the Menger curvature function associated to the graph
$\fz(x)=(x,A(x))$ is defined by  $(R(x,y,z))^{-1}$ where $R(x,y,z)$
is the radius of the circle through the points $\fz(x)$, $\fz(y)$,
$\fz(z)$
(the Menger curvature is zero if the three points lie on a straight
line). The crucial identity is
$$\frac{1}{R(x,y,z)}=\frac{4 \text{ area }(\cT(x,y,z))}{|\fz(y)-\fz(x)| |\fz(z)-\fz(y)| |\fz(x)-\fz(z)| }$$
where $\cT(x,y,z) $ is the  triangle  with vertices $\fz(x)$,
$\fz(y)$, $\fz(z)$.
The identity implies, after using $|\fz(a)-\fz(b)|\ge |a-b|$ and
\eqref{SymKformula},  the inequality \Be \label{Mengervssymm}
\frac{1}{R(x,y,z)} \le 2 \,\big|\text{Sym}[\cK_A](x,y,z)\big|^{1/2}.
\Ee See \cite{mv}, \cite{verdera} for more on the proof of the $L^2$
boundedness of the Cauchy integral operator based on Menger
curvature. We do not emphasize Menger curvature in this paper since,
while the inequality \eqref{Mengervssymm}
 is efficient when  $A$ is a Lipschitz function (as then $|\fz(a)-\fz(b)\approx |a-b|$), it may be wasteful for the  Sobolev  classes of functions we are interested here.

\section{Comparison with  Marcinkiewicz type square functions}\label{ptsect}
Given $f\in L^{2} (\mathbb{R})$ and $m\in\mathbb{R}$, consider the
following  square functions defined for $x\in \bbR$ by
\Be \label{cGdef}\cG_{\alpha,m} f(x)=\Big( \int_{\mathbb{R}}\! \Big| \frac{f(x+mt)-f(x)}{mt} -\frac{f(x+t)-f(x)}{t}\Big|^2 \frac{dt}{|t|^{2\alpha-1}}\Big)^{1/2}.\Ee 
The square function $S_\alpha$ can be recovered from the $\cG_{\alpha,m}$ using the 
identity
\begin{equation}\label{eq1}
S_\alpha f(x)^2=2\int_{|m|>1}\cG_{\alpha,m} f(x)^2\frac{dm}{|m-1|^{2\alpha}}
\end{equation}
which follows by the change of variables $s=mt$ for $|s|\ge |t|$, and symmetry
considerations.
Conversely, the next lemma shows a pointwise domination of  $\cG_{\alpha,m} f$  in terms of   $S_\alpha f$, for every $m>1$.
For $m=2$ we have
$$
\cG_{\alpha,2} f(x)\,=\,\frac { G_\alpha f(x)}2$$
 for the  Marcinkiewicz square function $G_\alpha f$ and thus 
recover the pointwise inequality \eqref{Galpha-maj} stated in the introduction.

\begin{lemma}\label{lem3.1}
Let 
$\alpha\ge 0$. Then for $m>1$, there exists a constant $C_{\alpha,m}$
such that for all $f\in L^2(\bbR)$ 
$$\cG_{\alpha,m}f(x) \le C_{\alpha,m}  S_\alpha f(x), \text{  $x\in \bbR$.} $$
\end{lemma}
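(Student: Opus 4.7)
For fixed $x\in\bbR$, set $\phi(t)=(f(x+t)-f(x))/t$ for $t\neq 0$. Then
\[
\cG_{\alpha,m}f(x)^2=\int_{\bbR}|\phi(mt)-\phi(t)|^2\,|t|^{1-2\alpha}\,dt,\qquad
S_\alpha f(x)^2=\iint_{\bbR^2}|\phi(s)-\phi(r)|^2\,|s-r|^{-2\alpha}\,ds\,dr,
\]
so the claim reduces pointwise in $x$ to a one-dimensional Poincar\'e-type inequality for $\phi$ with constant depending only on $\alpha$ and $m$.

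The plan is to start from the identity $\phi(mt)-\phi(t)=[\phi(mt)-\phi(s)]-[\phi(t)-\phi(s)]$, valid for every $s$, average it over $s$ in the interval $I_t$ with endpoints $t$ and $mt$ (so that $|I_t|=(m-1)|t|$), and apply Cauchy--Schwarz to obtain the pointwise bound
\[
|\phi(mt)-\phi(t)|^2 \le \frac{2}{|I_t|}\int_{I_t}\bigl(|\phi(mt)-\phi(s)|^2+|\phi(s)-\phi(t)|^2\bigr)\,ds.
\]
Multiplying by $|t|^{1-2\alpha}$, integrating over $t\in\bbR$, and applying Fubini produces two integrals whose domains consist of pairs $(s,t)$ with $s$ lying between $t$ and $mt$. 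On these domains both $|s-t|$ and $|mt-s|$ are comparable to $(m-1)|t|$, so the combined weight $|t|^{-2\alpha}/(m-1)$ is controlled by a constant depending on $\alpha$ and $m$ times $|s-t|^{-2\alpha}$ for the second integral; for the first, the change of variables $u=mt$ yields an analogous bound with $|u-s|^{-2\alpha}$. Each resulting term is then majorized by $S_\alpha f(x)^2$.

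The main technical issue will be carefully tracking constants through the change of variables and verifying the weight comparison; the resulting $C_{\alpha,m}$ will blow up as $m\to 1^+$, reflecting the singular weight $|m-1|^{-2\alpha}$ that appears in the identity \eqref{eq1}. The cases $t>0$ and $t<0$ are treated identically once $I_t$ is taken as the minimal closed interval containing both endpoints.
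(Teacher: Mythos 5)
Your approach is correct and is genuinely different from the paper's. The paper first notes the triangle-inequality bound
\[
\cG_{\alpha,m}f(x)^2 \le 2\,(s/m)^{2-2\alpha}\cG_{\alpha,s}f(x)^2 + 2\,\cG_{\alpha,m/s}f(x)^2,
\]
obtained by inserting the intermediate increment $mt/s$ and changing variables, then averages over $s\in[1,m]$ with respect to $ds/s$, exploits the invariance of $[1,m]$ under $s\mapsto m/s$, and finally invokes the identity \eqref{eq1} to recover $S_\alpha f(x)$. Your route instead averages over the intermediate \emph{point} $s$ in the interval $I_t$ between $t$ and $mt$, applies Cauchy--Schwarz and Fubini, and bounds the resulting weight directly against $|s-t|^{-2\alpha}$. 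Your argument is more elementary and self-contained (it does not pass through \eqref{eq1}, only through the definitions), while the paper's is more in the spirit of its other scaling manipulations with the family $\cG_{\alpha,m}$. Both give constants behaving like $(m-1)^{\alpha-1/2}$ as $m\to 1^+$.

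One small inaccuracy in the write-up: for $s\in I_t$ the quantities $|s-t|$ and $|mt-s|$ are \emph{not} comparable to $(m-1)|t|$ --- either can be arbitrarily small (they sum to $(m-1)|t|$). What you actually use, and what suffices, is the one-sided bound $|s-t|\le(m-1)|t|$, which together with $\alpha\ge 0$ gives
\[
\frac{|t|^{-2\alpha}}{m-1}\le (m-1)^{2\alpha-1}\,|s-t|^{-2\alpha},
\]
and similarly $|mt-s|\le(m-1)|t|$ controls the first term after the substitution $u=mt$. This is also exactly where the hypothesis $\alpha\ge 0$ enters. With that wording fixed, the proof is complete.
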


\begin{proof}
Fix $s\in\mathbb{R}$ and $m>1$.
We have
\begin{align*}
\cG_{\alpha,m}f(x)^2
 \le 2\int_{\mathbb{R}} \Big| \frac{f(x+mt)-f(x)}{mt}-\frac{f(x+mt/s)-f(x)}{mt/s}\Big|^{2}\frac{dt}{|t|^{2\alpha-1}}&\\
\\+2\int_{\mathbb{R}}\Big|\frac{f(x+mt/s)-f(x)}{mt/s} -\frac{f(x+t)-f(x)}{t}\Big|^{2}\frac{dt}{|t|^{2\alpha-1}}&.
\end{align*}
The change of variable $mt/s\!=\!u$ shows that the first term is equal to
{$2 (s/m)^{2-2\alpha}\cG_{\alpha,s} f(x)^{2}$}. Hence
$$
\cG_{\alpha,m}f(x)^2\le 2 (s/m)^{2-2\alpha}\cG_{\alpha,s}f(x)^2+2\cG_{\alpha,m/s}f(x)^2.
$$
Observe that the interval $[1,m]$ is invariant under the change of variable $s\mapsto m/s$. Integrating with respect to the measure $ds/s$  yields
\begin{align*}
 &\cG_{\alpha,m}f(x)^2\log m = \int_1^m \cG_{\alpha,m} f(x)^2 \frac{ds}{s}
 \\
 &\le 2 \int_1^m \big[(s/m)^{2-2\alpha}\cG_{\alpha,s}f(x)^2+\cG_{\alpha, m/s}f(x)^2\big] \frac{ds}{s}
 \\
 &= 2\int_1^m \cG_{\alpha,s} f(x)^2 \Big( \frac{s^{2-2\alpha}}{m^{2-2\alpha}}+1\Big) \frac{ds}{s} 
 \\
 &\le 2A_{\alpha,m} \int_1^m \cG_{\alpha,s} f(x)^2 \frac{ds}{(s-1)^{2\alpha}}
 \end{align*}
 where 
 $A_{\alpha,m}:=\sup_{1\le s\le m}  ((s/m)^{2-2\alpha}+1) s^{-1} (s-1)^{2\alpha}
 $  is clearly finite for $\alpha\ge 0$. Now by the identity \eqref{eq1} we obtain 
 \[\cG_{\alpha,m}f(x)\le \Big(\frac{A_{\alpha,m}}{\log m}\Big)^{1/2} S_\alpha f(x)\,.
 \qedhere\]
\end{proof}
\section{Necessary conditions}\label{necessary}
We show that our characterization fails to extend to the
Hardy-Sobolev spaces (corresponding to $p=1$) and that the
condition $1/2<\alpha<3/2$ in Theorem \ref{sobchar} is necessary.  In
what follows we use the notation \Be\label{difference} \Delta_s
f(x)= f(x+s)-f(x)\Ee for the difference  operator with increment
$s$.

The restriction $1/2<\alpha<3/2$ is known to be necessary in other similar contexts. For instance, that $\alpha < 3/2$ is a consequence of Proposition 2 in \cite{brezis}
applied to $ s^{-1} \Delta_s f(x)$. 
However we present below a direct argument for the case at hand.

\subsection{\it The condition $\alpha>1/2$} Suppose that $0<\alpha\le 1/2$.
Consider $f\in C^\infty_c$ with vanishing moments up to order $2$,
with the property that $f(x)=1$ for $x\in [0,1]$ and $f$ is
supported in $(-3/2,3/2)$. Then $f\in \dot H^p_\alpha$ for $p\ge 1$. Notice that
$$s^{-1}\Delta_s f(x)-t^{-1}\Delta_tf (x) = -s^{-1}+ t^{-1} \text{
for  $x\in [0,1]$, $t\in [\tfrac 32,2]$, $s>4$,}$$ and thus, for
$x\in [0,1]$,
$$S_\alpha f(x)\ge \frac{1}{4} \Big(\int_{s=4}^\infty |s-2|^{-2\alpha} ds\Big)^{1/2}=\infty,\, \text{ if } \alpha\le 1/2\,.$$
Thus we need to have $\alpha>1/2$.

\subsection{\it The condition $\alpha<3/2$}
Let $f\in C^\infty_c$ with vanishing moments up to  order $2$
{and satisfying} $f(x)=x^2$ for $|x|\le 4$. 
As above, $f\in \dot H^p_\alpha$ for $p\ge 1$.
Now
$$s^{-1}\Delta_s f(x)-t^{-1} \Delta_t f(x)
= \int_0^1 f'(x+us)-f'(x+ut) du$$
so that $s^{-1}\Delta_s f(x)-t^{-1}\Delta_t f(x) = s-t$ if $|x|\le
1$, $|s+t|\le 1$, $|s-t|\le 1$, and we get
$$S_\alpha f(x)\ge \Big(\iint_{\substack{|s-t|\le 1\\|s+t|\le 1} }|s-t|^{2-2\alpha} \, ds\, dt\Big)^{1/2}, \text{ for } |x|\le 1.
$$
Hence, if $\alpha\ge 3/2$ then $S_\alpha f(x)=\infty$ for $|x|\le 1$
which shows the necessity of the condition $\alpha<3/2$.

\subsection{\it Failure of the strong type Hardy space bound}\label{hardyfailure}
We show that for functions in the homogeneous  Hardy-Sobolev spaces
$\dot H^1_\alpha$ the square-function $S_\alpha f$ may fail to be in $L^1$, or even in any Lorentz space
$L^{1,q}$ with $q<\infty$.

Let  $f$ be an odd  smooth  function with compact support in
$(-2,2)$   such that
$f(y)=1$ for $y\in [1/2,1]$.  Using  dyadic frequency decompositions
one can show that $\cD^\alpha f \in H^1(\bbR)$ for $\alpha\ge 0$.
 Let $x>2$.
We then  have
\begin{align*}
{\Delta_s f(x)}&=1\quad \text{ if } -x+1/2\le s\le -x+1,
\\
 {\Delta_t f(x)}&=-1 \quad \text{ if } -x-1\le t\le -x-1/2.
 \end{align*}
Hence, by \eqref{squarefct} we get for  $x>2$,
\begin{align*}
S_\alpha f(x)&\ge \Big(\int_{s=-x+1/2}^{-x+1}\int_{t=-x-1}^{-x-1/2}
\frac{|s^{-1}+t^{-1}|^2}{|s-t|^{2\alpha}} dvdw\Big)^{1/2}
\ge \frac 1{2(x-1)}\,
\end{align*} and thus   $S_\alpha  f\notin L^{1,q}(\bbR)$ for $q<\infty$.

\subsection{\it Failure of a weak type $(1,1)$ bound} \label{weaktype11failure}
We prove the statement given after  Corollary \ref{sob1hardy}
and  show that there is a sequence of functions $f_j$ such that  $\|f_j'\|_1
=O(1)$ and the $S_1 f_j$ are unbounded in $L^{1,\infty}$.

Define $f_j(x)=0$ for $x\le 0$, $f_j(x)=jx$, for $0\le x\le j^{-1}$
and $f_j(x)=1$ for $x> j^{-1}$ so that $f_j$ is a regularized
version of the Heaviside function. We have
$f_j'=j\bbone_{[0,j^{-1}]}$ so that $\|f_j'\|_1=1$.

Let now $-3/4\le x\le -1/2$. Then $f_j(x)=0$ and $f_j(x+s)=1$ if
$-x+1/j\le s\le 1$, moreover $f_j(x+t)=0$ for $j^{-1} \le t\le -x$.
We thus get, for $j\ge 100$,
\begin{align*} S_1f(x) &\ge \Big(\int_{s=-x+j^{-1} }^1s^{-2} \int_{1/j}^{-x} (s-t)^{-2} dt\, ds\Big)^{1/2}
\\ &\ge c \Big( \int_{-x+j^{-1}}^1 \big((s+x)^{-1} - (s-j^{-1})^{-1}\big)\,ds \Big)^{1/2}
\ge c' \big( \log j -C\big)^{1/2}.
\end{align*}
Hence for large $j$ and small $c$
$$\meas \{x: S_1f_j(x)\ge c\sqrt{\log j}\} \ge 1/4$$ which shows
$\|S_1 f_j\|_{L^{1,\infty}} / \|f_j'\|_1\gc \sqrt{\log j}$.

\smallskip

\noi {\it Open problem:} It would be interesting to explore  what happens
if the ordinary derivative $f'$ is replaced by the Riesz-derivative
$\cD^1 \!f$. More generally, does the weak type $(1,1)$ inequality $\|S_\alpha
f\|_{L^{1,\infty}}\lc \|\cD^\alpha\! f\|_{L^1}$ hold for
$1/2<\alpha<3/2$?

\section{$L^p$ converse estimates}\label{lowerbounds}
It is our objective to prove the converse estimate \Be
\label{converse} \|f\|_p \le C_{\alpha,p} \|S_\alpha
(\cD^{-\alpha}\!f)\|_p \Ee for $1<p<\infty$. There is no restriction
on $\alpha$ in this part of the proof.

\bigskip

First consider the function $\rho(s)= s^{-1} (e^{is}-1) $ and
observe that
$$\rho(s)-\rho(t) = \frac{1}{2}(t-s) + E(s,t)$$
where $|E(s,t)|\le C (|s|+|t|)|s-t|$ for $|s|, |t|\le 1$. Let
$$R_\eps=\{(s,t): \eps<s<2\eps,\, \eps/10<|s-t|<\eps/5\};$$ then for
sufficiently small $\eps>0$ the function
$$m(\xi)= \frac 1{|R_\eps|} \iint_{R_\eps} \Big( \frac{e^{is\xi}-1}{s}-\frac{e^{it\xi}-1}{t}\Big) |s-t|^{-\alpha} \, ds dt $$
is smooth on $[-4,-1/4]\cup [1/4,4]$,  and moreover
$$|m(\xi)| \ge C_{\eps,\alpha}>0, \quad 1/4\le|\xi|\le 4.$$

Let $\varphi$ be supported in $(1/2,2) $ such that $\sum_{k\in \bbZ}
\varphi(2^{-k}(|\xi|) )=1$  for all $\xi\neq 0$. Let $\widetilde
\varphi \in C^{\infty}$ be supported in $(1/4,4) $ such that
$\widetilde \varphi=1$ on $[1/2,2]$. Then $\xi \mapsto \widetilde
\varphi(|\xi|)/ m(\xi)$ is a $C^\infty_c$ function with support in
$\{\xi:1/4<|\xi|<4\}$.

Define  three operators $L_k$, $M_k$,  $\widetilde L_{k,\alpha}$ by
\begin{align*}
\widehat {L_k f}(\xi) &= \varphi(2^{-k}|\xi|) \widehat f(\xi)\,,
\\\
\widehat {M_k f}(\xi) &= \frac{\widetilde \varphi(2^{-k}|\xi|) }{
m(2^{-k}\xi)}  \widehat f(\xi)\,,
\\
\widehat {\widetilde L_{k,\alpha} f}(\xi) &= \widetilde
\varphi(2^{-k}|\xi|) (2^{-k}|\xi|)^\alpha  \widehat f(\xi)\,.
\end{align*}
These convolution operators make sense for Hilbert-space valued
functions.

Below we shall use the following
\begin{lemma} \label{vv} Let $1<p<\infty$.

(i) For $\{f_k\} \in L^p(\ell^2)$
$$\Big \|\sum_{k\in \bbZ} L_k f_k \Big\|_{p} \le C_p \Big\|\Big(\sum_{k\in \bbZ}|f_k|^2\Big)^{1/2}\Big\|_p\,.$$

(ii) For $\{f_k\} \in L^p(\ell^2)$
$$\Big \|\Big(\sum_{k\in \bbZ} |M_k f_k|^2\Big)^{1/2}  \Big\|_{p} \le C_p \Big\|\Big(\sum_{k\in \bbZ}|f_k|^2\Big)^{1/2}\Big\|_p\,.$$

(iii)  Let $\cH$ be a Hilbert space. For $F\in L^p(\cH)$ we have
$$\Big \|\Big(\sum_{k\in \bbZ} |\widetilde L_{k,\alpha}
F|_{\cH}^2\Big)^{1/2}  \Big\|_{p} \le C_{p,\alpha}
\|F\|_{L^p(\cH)}\,.$$
\end{lemma}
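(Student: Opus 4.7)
\textbf{Proof plan for Lemma~\ref{vv}.} All three bounds are Littlewood--Paley type estimates and the common thread is that each of the symbols $\varphi(2^{-k}|\xi|)$, $\widetilde\varphi(2^{-k}|\xi|)/m(2^{-k}\xi)$ and $\widetilde\varphi(2^{-k}|\xi|)(2^{-k}|\xi|)^\alpha$ is a $C^\infty$ function supported in the dyadic shell $|\xi|\sim 2^k$, with only boundedly many shells overlapping at any given $\xi\ne 0$, and is obtained from a fixed smooth, compactly supported function of $|\eta|$ by dyadic dilation. In particular, by dyadic rescaling, Mikhlin-type derivative estimates $|\xi|^j|\partial^j_\xi(\cdot)|\le C_j$ hold uniformly in $k$.

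For part (i), which is a reverse square-function estimate, I would argue by duality against the classical Littlewood--Paley inequality. For $g\in L^{p'}$ and $\{f_k\}$ with finitely many nonzero terms,
\[
\Big|\int \Big(\sum_k L_k f_k\Big)\overline g\,dx\Big|
=\Big|\sum_k \int f_k\,\overline{L_k^* g}\,dx\Big|
\le \Big\|\big(\textstyle\sum_k|f_k|^2\big)^{1/2}\Big\|_p \Big\|\big(\textstyle\sum_k|L_k^* g|^2\big)^{1/2}\Big\|_{p'}.
\]
The second factor is bounded by $C_{p'}\|g\|_{p'}$ by the standard Littlewood--Paley square-function theorem, since $L_k^*$ has symbol $\overline{\varphi(2^{-k}|\xi|)}$, again supported in a dyadic annulus. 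Taking the supremum over $g$ yields (i).

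For parts (ii) and (iii) I would view each as a single vector-valued Fourier multiplier and verify the vector-valued Mikhlin--H\"ormander conditions. In (iii), let $T\colon F\mapsto\{\widetilde L_{k,\alpha} F\}_{k\in\bbZ}$; its operator-valued symbol $\Lambda(\xi)\in\mathcal L(\cH,\ell^2(\cH))$ sends $v\in\cH$ to the sequence $\{\widetilde\varphi(2^{-k}|\xi|)(2^{-k}|\xi|)^\alpha v\}_k$. Finite overlap gives $\|\Lambda(\xi)\|_{\cH\to\ell^2(\cH)}\le C_\alpha$ and, by Leibniz and dyadic rescaling, $|\xi|^j\|\partial^j_\xi\Lambda(\xi)\|_{\cH\to\ell^2(\cH)}\le C_{\alpha,j}$ for all $j$. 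Since $\ell^2(\cH)$ is again a Hilbert space, the vector-valued Mikhlin multiplier theorem (equivalently, vector-valued Calder\'on--Zygmund theory with Hilbert target) supplies the desired $L^p$ bound. Part (ii) follows by the same scheme applied to the diagonal multiplier $\{f_k\}\mapsto\{M_k f_k\}$ on $L^p(\ell^2)$, whose symbol at $\xi$ is the diagonal operator on $\ell^2$ with entries $\widetilde\varphi(2^{-k}|\xi|)/m(2^{-k}\xi)$.

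The main technical point in (ii) is to justify that the symbol is indeed smooth with Mikhlin-bounded derivatives, and here I would use crucially the lower bound $|m(\eta)|\ge C_{\eps,\alpha}>0$ on $\tfrac14\le|\eta|\le 4$ established just before the lemma. This makes $\eta\mapsto \widetilde\varphi(|\eta|)/m(\eta)$ a fixed $C^\infty_c$ function, so all derivatives of its dyadic dilates satisfy the required scaling bounds uniformly in $k$, and the argument proceeds as in (iii). I expect this verification of uniformity to be the only delicate step; the underlying multiplier theorem is standard.
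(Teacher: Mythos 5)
Your proposal is correct and takes the same route the paper intends: the authors simply invoke ``the standard theory of singular convolution operators for Hilbert-space valued functions'' (citing Benedek--Calder\'on--Panzone and Stein), and your argument---duality against the classical Littlewood--Paley square function for (i), and verification of vector-valued Mikhlin/Calder\'on--Zygmund conditions for the diagonal and $\cH\to\ell^2(\cH)$-valued symbols in (ii) and (iii), using finite overlap of the dyadic shells, dyadic rescaling of the derivative bounds, and the lower bound $|m|\ge C_{\eps,\alpha}$ on $1/4\le|\xi|\le 4$---is precisely the content of that standard theory made explicit.
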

\begin{proof}
These are straightforward  applications of the standard  theory of
singular convolution operators  for Hilbert-space valued functions,
see \cite{bcp}, \cite{stein-book}.\end{proof}

\begin{proof}[Proof of \eqref{converse}]
Define $L_k$ as above and $\widetilde L_k$ similarly, with $\varphi$
replaced by $\widetilde \varphi$. We then have
$$ f= \sum_{k\in \bbZ} L_k f = \sum_{k\in \bbZ} L_k \widetilde L_k \widetilde L_k f
= \sum_{k\in \bbZ} L_k   M_k \widetilde L_k \cF^{-1} [
m(2^{-k}\cdot) \widehat f].
$$
By (i), (ii)  of Lemma \eqref{vv} we have
\begin{align*}
\|f\|_p &\lc \Big\|\Big(\sum_{k\in \bbZ}\big |M_k \widetilde L_k
\cF^{-1} [ m(2^{-k}\cdot) \widehat f]\big|^2\Big)^{1/2} \Big\|_p
\\
&\le \Big\| \Big(\sum_k \Big| \frac{1}{|R_\eps|} \iint_{R_\eps}
T^{s,t}_k\! f\, ds\, dt\Big|^2 \Big)^{1/2} \Big\|_p
\end{align*}
where $T^{s,t}_k$ is defined by
\begin{multline*}
\widehat {T^{s,t}_k \!f}(\xi) = \widetilde \varphi(2^{-k} |\xi|)
\Big( \frac{ e^{i s (2^{-k}\xi)}-1}{s} -\frac{ e^{i t
(2^{-k}\xi)}-1}{t} \Big) |s-t|^{-\alpha} \widehat f(\xi)
\\
\,\, =2^{-k} \widetilde \varphi(2^{-k} |\xi |) (2^{-k} |\xi|)^\alpha
\Big( \frac{ e^{i (2^{-k} s) \xi}-1}{2^{-k}s} -\frac{ e^{i (2^{-k}
t) \xi}-1}{2^{-k} t} \Big) \frac{\widehat {\cD^{-\alpha}\!
f}(\xi)}{(2^{-k} |s-t|)^{\alpha} } \,.
\end{multline*}
We apply the Cauchy-Schwarz inequality on $R_\eps$ and get
$$
\|f\|_p \le \Big\| \Big(\sum_k \frac{1}{|R_\eps|} \iint_{R_\eps}
\big|T^{s,t}_k\!f\big|^2 ds\,dt\Big)^{1/2} \Big\|_p \,.
$$
Change variables $s=2^k v $, $t=2^k w$  so that the last inequality
becomes
\begin{multline*} \|f\|_p\\
\lc  \Big\| \Big(\sum_k \frac{1}{|R_\eps|} \iint_{2^{-k} R_\eps}
\Big| \frac{\widetilde L_{k,\alpha} \,[v^{-1}\Delta_v(\cD^{-\alpha}
\!f)-w^{-1}\Delta_w(\cD^{-\alpha} \!f)]}{|v-w|^\alpha}  \Big|^2dv\,
dw\Big)^{1/2} \Big\|_p \,.
\end{multline*}
We replace for each $k$ the domain of integration $2^{-k} R_\eps$ by
the entire $\bbR\times\bbR$ and then apply part (iii) of Lemma
\ref{vv} (with the Hilbert space $\cH=L^2(\bbR\times\bbR)$). We thus
see that $\|f\|_p$ is bounded by (a constant times)
\begin{align*}
&|R_\eps|^{-1/2}\Big\| \Big(\sum_k \iint_{\bbR\times
\bbR}\frac{\big|\widetilde L_{k,\alpha} [
v^{-1}\Delta_v(\cD^{-\alpha }\!f) -w^{-1} \Delta_w(\cD^{-\alpha
}\!f)] \big|^2} {|v-w|^{2\alpha} }dv\, dw\Big)^{1/2} \Big\|_p
\\
& \le C_{\eps,p,\alpha} \Big\| \Big(\iint_{\bbR\times \bbR}
\frac{\big| v^{-1}\Delta_v(\cD^{-\alpha} \!f)
-w^{-1}\Delta_w(\cD^{-\alpha} \!f)\big|^2} {|v-w|^{2\alpha} }dv\,
dw\Big)^{1/2} \Big\|_p
\end{align*}
which completes the proof of \eqref{converse}.
\end{proof}

\bigskip

\section{$L^2$ bounds} \label{L2sect}
As mentioned before the equivalence
\begin{equation}\label{L2equiv}
\|S_{\alpha}(\cD^{-\alpha}\!f)\|_2= c_\alpha\|f\|_2
\end{equation}
has been proved for $\alpha=1$ in \cite{mv}; a   straightforward
modification of the proof also applies to the case $\alpha\in
(1/2,3/2)$. In this section we further break up
$S_{\alpha}(\cD^{-\alpha}f)$ and obtain improved bounds for the
pieces, which are useful for the proof of Theorems \ref{sobchar} and
\ref{sobhardy}.

Let $\cH$ be the Hilbert space of square-integrable functions on
$\bbR\times\bbR$. Fix $\alpha\in (1/2,3/2)$. We define a convolution
operator $T$ mapping Schwartz functions on $\bbR$ to $\cH$-valued
functions on the real line, by \Be \label{Tdef} Tf(x,s,t)=
\frac{s^{-1}\Delta_s\cD^{-\alpha}\! f(x)-t^{-1}\Delta_t
\cD^{-\alpha} \!f(x) }{|s-t|^{\alpha}}\text{ for $|s|\ge |t|$, } \Ee
and $$Tf(x,s,t)=0 \text{ for  $|s|<|t|$.}$$ The inequality
$\|S_\alpha (\cD^{-\alpha }f)\|_p\lc \|f\|_p$ holds for all Schwartz
functions $f$ if and only if
 $T$ maps $L^p(\bbR)$ to $L^p(\bbR;\cH)$.
For the estimates below we may assume that $f$ is a Schwartz
function whose Fourier transform is compactly supported in
$\bbR\setminus\{0\}$.

We introduce finer decompositions by dividing up the $(s,t)$
parameter set. {For} $n,l\in \bbZ$,  $l \le n-2$, {set}
\Be\begin{aligned}\label{cVdef} \cV^{n,l}_k&=\{(s,t): 2^{-k+n}<
|s|\le 2^{-k+n+1}, \, 2^{-k+n-2}<|t|\le |s|,\,
\\&2^{-k+l-1}<|s-t|\le 2^{-k+l}\},
\end{aligned}
\Ee and note that  \Be \cV^{n,l}_k=\emptyset \text{  for }l\ge n+3.\Ee

Also for $\ell \in \bbZ$, $\ell\le n-2$, let \Be\label{cWdef}
\cW_k^{n,\ell}=\{(s,t): 2^{-k+n}< |s|\le 2^{-k+n+1}, \,
2^{-k+\ell-1}< |t|\le 2^{-k+\ell}\}. \Ee Then, for every $k\in
\bbZ$,
$$  \sum_{n\in \bbZ} \Big(\sum_{l=-\infty}^{n+2} \bbone_{\cV_k^{n,l}}(s,t)+\sum_{\ell=-\infty}^{n-2}
\bbone_{\cW_k^{n.\ell}}(s,t) \Big)= \begin{cases} 1&\text{ if
$|t|\le |s|$,}
\\
0&\text{ if $|t|> |s| $}.
\end{cases}
$$
We also observe \Be\label{setscaling}
\cV_k^{n,l}=2^{-k} \cV_0^{n,l},\,\,\,
\cW_k^{n,\ell}=2^{-k}\cW_0^{n,\ell}. \Ee


In what follows we denote by $\psi$  {a real} valued Schwartz
function so that $\widehat\psi(\xi)\neq 0$ for $\tfrac 14\le|\xi|
\le 4$ and $\widehat \psi$ vanishes to order $100$ at the origin. We
may choose $\psi$ so that
\Be\label{support-assu} \supp(\psi) \subset \{x: |x|\le 1/2\},\Ee We
remark that this assumption is not needed in the present section,
nor in the proof of Theorem \ref{sobchar}  discussed in
\S\ref{altLpproof}. However it is quite convenient in the proof of
the endpoint bound of Theorem \ref{sobhardy}.

Set $\psi_k= 2^{k}\psi(2^{k}\cdot)$. Define an operator $P_k$ by \Be
\label {Pk} P_k f=\psi_k*f.\Ee We introduce a decomposition of the
operator $T$. Let $\varphi\in C^\infty_c$ supported in $\{\xi:
1/2<|\xi|<2\}$ so that $\sum_{k\in \bbZ} \varphi(2^{-k}\xi)=1$ for
all $\xi\neq 0$. We then decompose
\begin{multline*}\widehat {Tf}(\xi, s,t)\\= \sum_{k\in\bbZ}\widehat \psi (2^{-k}\xi)^2\frac{\varphi(2^{-k}\xi)}{(2^{-k}|\xi|)^\alpha(\widehat\psi(2^{-k}\xi))^2}
\frac{1}{2^{k\alpha}|s-t|^\alpha} \Big(\frac{e^{i\xi
s}-1}{s}-\frac{e^{i\xi t}-1}{t}\Big)\widehat f(\xi)
\end{multline*}
and hence \Be\label{Tdec} Tf(x,s,t)= \sum_{k\in \bbZ} P_k T_k L_k
f(x,s,t)\Ee where \Be\label{Lk}\widehat{ L_k f}(\xi) =
\frac{\varphi(2^{-k}\xi)}{(2^{-k}|\xi|)^\alpha(\widehat\psi(2^{-k}\xi))^2}\widehat
f(\xi)\Ee and \Be \label{Tk}T_kf(x,s,t)=
\frac{1}{2^{k\alpha}|s-t|^\alpha}\Big( \frac{P_k f(x+s)-P_k f(x)}{s}
- \frac{P_kf(x+t)-P_kf(x)}{t}\Big)\, \Ee when $|t|\le |s|$ (and $T_k
f(x,s,t)=0$ otherwise). We also set, for $|t|<|s|$,
\begin{subequations} \begin{align}\label{Tk1}
T_{k,1}f(x,s,t)&= \frac{1}{2^{k\alpha}|s-t|^\alpha}\frac{P_k
f(x+s)-P_k f(x)}{s},
\\  \label{Tk2} T_{k,2}f(x,s,t)&=
\frac{1}{2^{k\alpha}|s-t|^\alpha} \frac{P_kf(x+t)-P_kf(x)}{t}\,
\end{align}
\end{subequations}
so that $T_k=T_{k,1}-T_{k,2}$.

We shall  repeatedly use the following scaling lemma.
\begin{lemma}\label{scalinglemma}  Let $g$ be a Schwartz function on $\bbR$.
For $k\in \bbZ$ and $\Omega\subset \bbR^2$, \Be \Big(\iint_\Omega
|T_k g(x, s,t) |^2 ds\, dt\Big)^{1/2} = \Big(\iint_{2^{k}\Omega}
|T_0 [g(2^{-k}\cdot)] (2^k x, v,w) |^2 dv\, dw\Big)^{1/2} \Ee
\end{lemma}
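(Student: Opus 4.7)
The plan is to prove this by direct change of variables, first unpacking how the operator $T_k$ scales under the dyadic scaling of $g$, and then tracking all Jacobian and homogeneity factors.

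First, I will establish the key intertwining identity
\[
P_k g(x) = P_0 [g(2^{-k}\cdot)](2^k x).
\]
This follows immediately from the definition $P_k g = \psi_k*g$ with $\psi_k=2^k\psi(2^k\cdot)$: writing out $P_k g(x) = \int 2^k\psi(2^k(x-y)) g(y)\,dy$ and substituting $y=2^{-k}z$ yields $\int \psi(2^k x - z)\, g(2^{-k}z)\,dz = P_0[g(2^{-k}\cdot)](2^k x)$.

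Next, I will substitute into the definition \eqref{Tk} and make the change of variables $v = 2^k s$, $w = 2^k t$. Under this change one has $|s-t|^\alpha = 2^{-k\alpha}|v-w|^\alpha$, which cancels exactly the prefactor $2^{-k\alpha}$ in $T_k$. Moreover, each difference quotient transforms as
\[
\frac{P_k g(x+s) - P_k g(x)}{s} = 2^k\,\frac{P_0[g(2^{-k}\cdot)](2^k x + v) - P_0[g(2^{-k}\cdot)](2^k x)}{v},
\]
and analogously with $t\leftrightarrow w$. Combining these observations produces the pointwise identity
\[
T_k g(x, 2^{-k}v, 2^{-k}w) = 2^k\, T_0[g(2^{-k}\cdot)](2^k x, v, w).
\]

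Finally, I will integrate the squared modulus. The Jacobian of $(s,t)\mapsto(v,w)$ gives $ds\,dt = 2^{-2k}\,dv\,dw$, which precisely absorbs the factor $2^{2k}$ produced by squaring the pointwise identity. The region $\Omega$ in the $(s,t)$ variables becomes $2^k\Omega$ in the $(v,w)$ variables, yielding the stated equality. There is no real obstacle here; it is a routine scaling computation, and the only thing one must be careful about is that every power of $2^k$ is tracked correctly (the $\alpha$-power from $|s-t|^\alpha$, the two powers from the difference quotients, and the two from the Jacobian) so that both sides match without residual factors.
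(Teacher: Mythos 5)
Your proof is correct and is essentially the same computation as the paper's: change variables $s=2^{-k}v$, $t=2^{-k}w$, track the powers of $2^k$ from the difference quotients, the $|s-t|^{-2\alpha}$ factor, and the Jacobian, and finish with the intertwining relation $P_k g(x)=P_0[g(2^{-k}\cdot)](2^k x)$.
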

\begin{proof}
The left hand side is equal to
\begin{align*}
&\Big(\iint_\Omega \Big|\frac{P_k g(x+s)-P_kg(x)}{s}-\frac{P_k
g(x+t)-P_kg(x)}{t} \Big|^2 \frac{ds\,
dt}{2^{2k\alpha}|s-t|^{2\alpha}}\Big)^{1/2}
\\=&
\Big(\iint_{2^{k}\Omega }\Big|\frac{P_k
g(x\!+\!2^{-k}v)-P_kg(x)}{v}-\frac{P_k g(x\!+\!2^{-k}w)-P_kg(x)}{w}
\Big|^2 \frac{dv\, dw}{|v-w|^{2\alpha}}\Big)^{1/2}
\end{align*}
and the assertion follows from $P_k g(x)=
P_0[g(2^{-k}\cdot)](2^kx)$.
\end{proof}

Our  {proof} of $L^2$ boundedness  {involves} the following
elementary estimates.
\begin{lemma} \label{sigmatauintegrallemma}
Let $a\ge b>0$. Then

(i)
$$\Big(\iint\limits_{\substack{|\sigma|\approx |\tau| \approx a\\
|\si-\tau|\approx b} }\Big|
\frac{e^{i\si}-1}{\si}-\frac{e^{i\tau}-1}{\tau}\Big|^2 \frac{d\si
d\tau}{|\si-\tau|^{2\alpha}}\Big)^{1/2} \lc \begin{cases} a^{\frac
12}b^{\frac 32-\alpha}&\text{ if } a\le 1,
\\
a^{-\frac 12}b^{\frac 32-\alpha}&\text{ if } b\le 1\le a,
\\
a^{-\frac 12}b^{\frac 12-\alpha}&\text{ if } 1\le b\le a.
\end{cases}
$$

(ii) Let $a\ge \ga$ and $\Omega_{a,\ga} :=\{ (\sigma,\tau):
|\tau|\le |\si|/2,\, |\tau|\approx \ga, \, |\si|\approx a \}$. Then
for $a\ge 1$
\begin{align*}
&\Big(\iint_{\Omega_{a,\ga}}\Big|\frac{e^{i\si}-1}{\si}\Big|^2
\frac{d\sigma d\tau}{|\sigma-\tau|^{2\alpha}}\Big)^{1/2} \lc
\ga^{\frac 12}a^{-\alpha} \min \{a^{\frac 12}, a^{-\frac 12}\}\,,
\\
&\Big(\iint_{\Omega_{a,\ga}} \Big|\frac{e^{i\tau}-1}{\tau}\Big|^2
\frac{d\sigma d\tau}{|\sigma-\tau|^{2\alpha}}\Big)^{1/2} \lc
a^{\frac 12-\alpha} \min \{\ga^{\frac 12}, \ga^{-\frac 12}\}\,.
\end{align*}
Moreover, if $a\le 1$ then
$$\Big(\iint\limits_{\Omega_{a,\ga}}\Big|
\frac{e^{i\si}-1}{\si}-\frac{e^{i\tau}-1}{\tau}\Big|^2 \frac{d\si
d\tau}{|\si-\tau|^{2\alpha}}\Big)^{1/2} \lc \gamma^{\frac 12}
a^{\frac 32-\alpha}.
$$
\end{lemma}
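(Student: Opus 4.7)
The plan is to reduce both parts of the lemma to simple pointwise control of the function $F(\sigma) := (e^{i\sigma}-1)/\sigma$, combined with elementary two-dimensional measure computations on the regions of integration. First I would record the basic bounds
\begin{equation*}
|F(\sigma)| \lc \min\{1,|\sigma|^{-1}\}, \qquad |F'(\sigma)| \lc \min\{1,|\sigma|^{-1}\},
\end{equation*}
which follow from a Taylor expansion of $F$ near the origin and from the explicit formula $F'(\sigma) = ie^{i\sigma}/\sigma - (e^{i\sigma}-1)/\sigma^2$ for $|\sigma| \gc 1$. Combined with the mean value theorem, they yield two complementary estimates on any region with $|\sigma|,|\tau| \approx a$, namely
\begin{equation*}
|F(\sigma)-F(\tau)| \lc |\sigma-\tau|\min\{1,a^{-1}\} \quad \text{and} \quad |F(\sigma)-F(\tau)| \lc \min\{1,a^{-1}\}.
\end{equation*}

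For part (i), I would next observe that the region has two-dimensional Lebesgue measure of order $ab$: integrate $\sigma$ over an interval of length $\approx a$, then $\tau$ over the two shells of length $\approx b$ around $\sigma \pm b$. In each of the three regimes one picks whichever of the two pointwise bounds above is better. When $a\le 1$ the derivative estimate gives $|F(\sigma)-F(\tau)|\lc b$, producing an integrand of size $b^{2-2\alpha}$ and the bound $(ab\cdot b^{2-2\alpha})^{1/2}=a^{1/2}b^{3/2-\alpha}$. When $b\le 1\le a$ one uses $|F(\sigma)-F(\tau)|\lc b/a$, yielding $a^{-1/2}b^{3/2-\alpha}$. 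When $1\le b\le a$ the triangle inequality gives $|F(\sigma)-F(\tau)|\lc a^{-1}$, yielding $a^{-1/2}b^{1/2-\alpha}$.

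For part (ii), the key geometric observation is that $|\tau|\le |\sigma|/2$ forces $|\sigma-\tau|\approx |\sigma|\approx a$, so the denominator $|\sigma-\tau|^\alpha$ is essentially the constant $a^{\alpha}$; the measure of $\Omega_{a,\gamma}$ is of order $a\gamma$. For the first estimate I would use $|F(\sigma)|\lc a^{-1}$ (valid since $a\ge 1$), giving integrand $\lc a^{-2-2\alpha}$ and $L^2$ norm $\gamma^{1/2}a^{-1/2-\alpha}$, matching the stated bound since the minimum equals $a^{-1/2}$ for $a\ge 1$. For the second, $|F(\tau)|\lc \min\{1,\gamma^{-1}\}$ gives, after splitting on whether $\gamma\le 1$ or $\gamma\ge 1$, the $L^2$ norm $a^{1/2-\alpha}\min\{\gamma^{1/2},\gamma^{-1/2}\}$. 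For the third, when $a\le 1$ the derivative bound and $|\sigma-\tau|\approx a$ give $|F(\sigma)-F(\tau)|\lc a$, producing integrand $\lc a^{2-2\alpha}$ and the $L^2$ norm $\gamma^{1/2}a^{3/2-\alpha}$.

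The hard part will be nothing conceptual: the whole argument amounts to bookkeeping across the case distinctions. The one subtlety worth keeping in mind is that in part (ii) the denominator does not interact with any cancellation in the numerator (unlike part (i), where the rate of decay of $F(\sigma)-F(\tau)$ as $b\to 0$ is crucial), so for (ii) one simply uses either the decay of $F$ at infinity or, in the small-$a$ case, the Lipschitz estimate on $F$, without trying to exploit the difference structure.
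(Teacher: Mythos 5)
Your proof is correct and takes essentially the same approach as the paper: the paper cites the two elementary bounds $\bigl|\tfrac{e^{i\sigma}-1}{\sigma}-\tfrac{e^{i\tau}-1}{\tau}\bigr|\lc \tfrac{|\sigma-\tau|}{1+|\sigma|+|\tau|}$ (for $|\sigma-\tau|\lc 1$) and $\bigl|\tfrac{e^{ir}-1}{r}\bigr|\lc\min\{1,|r|^{-1}\}$, which are precisely the consequences you derive from the derivative estimate $|F'(\sigma)|\lc\min\{1,|\sigma|^{-1}\}$ together with the pointwise size bound on $F$. You then carry out the same measure-times-integrand bookkeeping the paper leaves implicit (meas $\approx ab$ in (i), $\approx a\gamma$ in (ii), with $|\sigma-\tau|\approx a$ throughout (ii)).
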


\begin{proof}This follows  {readily from}
\Be \label{elemest1}\Big|\frac{e^{i\sigma}-1}{\sigma} -
\frac{e^{i\tau}-1}{\tau} \Big|\le C
\frac{|\sigma-\tau|}{1+|\sigma|+|\tau|} ,  \quad|\sigma-\tau|\le 10.
\Ee and \Be\label{elemest2} \Big|\frac{e^{ir}-1}{r}  \Big|\le C
\min\{1, r^{-1}\}. \qedhere \Ee
\end{proof}

\begin{proposition}
\label{L2est} Let $1/2<\alpha<3/2$ and let $\cV_k^{n,l}$, $\cW_k^{n,\ell}$ be as in \eqref{cVdef}, \eqref{cWdef}. Then the following estimates
hold.
\begin{subequations}

(i)\begin{multline}\label{VlL2} \Big\| \Big( \iint \Big|
\sum_{\substack {k\in \bbZ:\\ (s,t)\in  \cV_k^{n,l}}} P_k T_k
f_k(\cdot,s,t)\Big|^2ds \,dt\Big)^{1/2}\Big\|_2 \lc c_{n,l}
\Big\|\Big(\sum_k|f_k|^2\Big)^{1/2} \Big\|_2\,,
\\ \text{ with }
c_{n,l}=\begin{cases} 2^{\frac n2} 2^{l(\frac 32-\alpha)} &\text{ if
} n\le 0, \, l\le n+2,
\\
2^{-n/2} 2^{l(\frac 32-\alpha)}&\text{ if } n\ge 0, \, l\le 2,
\\
2^{-n/2} 2^{-l(\alpha-\frac 12)}&\text{ if } n\ge 0, \, 0\le l\le
n+2,
\\
0 &\text{ if } l>n+2.
\end{cases}
\end{multline}

(ii) Let $n\ge 0$ and $\ell\le n-2$. Then
\begin{multline}\label{WellL2-1}\Big\| \Big( \iint \Big|
\sum_{\substack {k\in \bbZ:\\ (s,t)\in  \cW_k^{n,\ell}}} P_k T_{k,1}
f_k(\cdot,s,t)\Big|^2ds \,dt\Big)^{1/2}\Big\|_2 \\ \lc 2^{\ell/2}
2^{-n\alpha} \min\{ 2^{-n/2}, 2^{n/2}\}
\Big\|\Big(\sum_k|f_k|^2\Big)^{1/2} \Big\|_2
\end{multline}
and
\begin{multline}\label{WellL2-2}\Big\| \Big( \iint
\Big| \sum_{\substack {k\in \bbZ:\\ (s,t)\in  \cW_k^{n,\ell}}} P_k
T_{k,2} f_k(\cdot,s,t)\Big|^2ds \,dt\Big)^{1/2}\Big\|_2 \\ \lc
2^{-n(\alpha-\frac 12)} \min\{2^{-\ell/2} , 2^{\ell/2}\}
\Big\|\Big(\sum_k|f_k|^2\Big)^{1/2} \Big\|_2\,.
\end{multline}
Moreover, for $n\le 0$,
\begin{multline}\label{WellL2}\Big\| \Big( \iint
\Big| \sum_{\substack {k\in \bbZ:\\ (s,t)\in  \cW_k^{n,\ell}}} P_k
T_{k} f_k(\cdot,s,t)\Big|^2ds\, dt\Big)^{1/2}\Big\|_2 \\ \lc
2^{n(\frac 32-\alpha)}  2^{\ell/2}
\Big\|\Big(\sum_k|f_k|^2\Big)^{1/2} \Big\|_2\,.
\end{multline}
\end{subequations}
\end{proposition}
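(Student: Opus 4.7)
The plan is to combine Littlewood--Paley orthogonality in $k$, Plancherel in $x$, and a normalising change of variables on the $(s,t)$-integral, reducing everything to the estimates of Lemma~\ref{sigmatauintegrallemma}. Since
$$\widehat{P_kT_kf}(\xi,s,t)=\widehat\psi(2^{-k}\xi)^2\,\frac{1}{2^{k\alpha}|s-t|^\alpha}\Big(\frac{e^{i\xi s}-1}{s}-\frac{e^{i\xi t}-1}{t}\Big)\widehat f(\xi),$$
each summand (for fixed $(s,t)$) has Fourier support in $|\xi|\approx 2^k$, so the summands are almost orthogonal in $L^2_x$ across $k$. The cutoffs $\bbone_{\cV_k^{n,l}}(s,t)$ act on $(s,t)$ only and do not affect this orthogonality, so Plancherel and Fubini reduce the problem to proving
$$\sum_k \iint_{\cV_k^{n,l}}\!\|P_kT_kf_k(\cdot,s,t)\|_{L^2_x}^2\,ds\,dt\;\lesssim\;c_{n,l}^2\sum_k\|f_k\|_2^2,$$
together with the analogous statements for $\cW^{n,\ell}_k$ in place of $\cV^{n,l}_k$ and $T_{k,j}$ in place of $T_k$.

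For a single $k$, Plancherel in $x$ converts the inner summand to $\int|\widehat\psi(2^{-k}\xi)|^4|\widehat{f_k}(\xi)|^2 J_k(\xi)\,d\xi$, where $J_k(\xi)$ is the $(s,t)$-integral of $2^{-2k\alpha}|s-t|^{-2\alpha}\big|\tfrac{e^{i\xi s}-1}{s}-\tfrac{e^{i\xi t}-1}{t}\big|^2$ over $\cV_k^{n,l}$ (or the corresponding one-term expression for $T_{k,1},T_{k,2}$). The change of variables $(\sigma,\tau)=(\xi s,\xi t)$ combined with $|\xi|\approx 2^k$ on the support of $\widehat\psi(2^{-k}\xi)$ and the scaling \eqref{setscaling} cancels all $k$-dependence in $J_k(\xi)$, turning it into a scale-invariant integral over $|\sigma|\approx|\tau|\approx 2^n$, $|\sigma-\tau|\approx 2^l$ in the $\cV$-case, or $|\sigma|\approx 2^n$, $|\tau|\approx 2^\ell$, $|\sigma-\tau|\approx 2^n$ in the $\cW$-case. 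Part (i) then follows from Lemma~\ref{sigmatauintegrallemma}(i) with $a=2^n$, $b=2^l$: its three non-vacuous sub-cases match exactly the three non-zero lines of $c_{n,l}$ in \eqref{VlL2}, while the line $l>n+2$ is vacuous since $\cV_k^{n,l}=\emptyset$. For (ii) with $n\ge 0$ one splits $T_k=T_{k,1}-T_{k,2}$ and applies the first two inequalities of Lemma~\ref{sigmatauintegrallemma}(ii) with $a=2^n$, $\gamma=2^\ell$ to obtain \eqref{WellL2-1} and \eqref{WellL2-2}. For $n\le 0$, each of $(e^{i\xi s}-1)/s$ and $(e^{i\xi t}-1)/t$ is of order $|\xi|$ without useful decay, so one keeps the combined operator $T_k$ and uses the third inequality of Lemma~\ref{sigmatauintegrallemma}(ii), which exploits the cancellation $|(e^{i\sigma}-1)/\sigma-(e^{i\tau}-1)/\tau|\lesssim|\sigma-\tau|$ for small arguments, to derive \eqref{WellL2}.

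The main technical point is the orthogonality step: since the cutoffs $\bbone_{\Omega_k}$ depend only on $(s,t)$, the Fourier-support disjointness in $|\xi|\approx 2^k$ survives integration against them, and Plancherel can be applied pointwise in $(s,t)$ before integrating. Everything else is bookkeeping, matching each range of $n$ and $l,\ell$ to the appropriate sub-case of Lemma~\ref{sigmatauintegrallemma}, and verifying that the $k$-dependent factors $2^{k\alpha}$ and $|\xi|^{\alpha}\approx 2^{k\alpha}$ cancel after the change of variables so that the resulting estimate is uniform in $k$.
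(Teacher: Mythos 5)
Your overall architecture (Plancherel in $x$, move the $k$-sum outside the square, rescale, invoke Lemma~\ref{sigmatauintegrallemma}) matches the paper's, but the justification of two key steps rests on a premise that is false with the $\psi$ chosen in \S\ref{L2sect}, and this creates a genuine gap as written.

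You assert that $\widehat{P_kT_kf}(\cdot,s,t)$ ``has Fourier support in $|\xi|\approx 2^k$'' because of the factor $\widehat\psi(2^{-k}\xi)^2$. That is incorrect here: $\psi$ is taken with compact support in $x$ (see \eqref{support-assu}), so $\widehat\psi$ is entire and certainly not compactly supported. You use this false localization twice. First, to move the $k$-sum outside the square via ``almost orthogonality.'' You do not need Fourier orthogonality at all for this; the paper's argument is simpler and exact: for fixed $n,l$ the sets $\cV^{n,l}_k$ (and likewise $\cW^{n,\ell}_k$) are pairwise disjoint in $(s,t)$ as $k$ varies, so for each $(s,t)$ the sum over $\{k:(s,t)\in\cV^{n,l}_k\}$ contains at most one term, and $\iint|\sum_k\cdots|^2 = \sum_k\iint_{\cV^{n,l}_k}|\cdots|^2$ holds identically. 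You mention the cutoffs only to say they ``do not affect'' the orthogonality; in fact they \emph{are} the mechanism. Second, and more seriously, you use ``$|\xi|\approx 2^k$ on the support of $\widehat\psi(2^{-k}\xi)$'' to claim that after the change of variables $(\sigma,\tau)=(\xi s,\xi t)$ the integral is taken over $|\sigma|\approx|\tau|\approx 2^n$, $|\sigma-\tau|\approx 2^l$, and to then apply Lemma~\ref{sigmatauintegrallemma} with $a=2^n$, $b=2^l$. Since $\widehat\psi$ is not compactly supported, after rescaling the region is actually $|\xi|\cV_0^{n,l}$, so $a=2^n|\xi|$ and $b=2^l|\xi|$ with $\xi$ ranging over all of $\bbR$. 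For different $\xi$ you land in different sub-cases of the lemma, and the resulting bound is $\xi$-dependent. The paper controls this by the extra factor $|\xi|^\alpha|\widehat\psi(\xi)|$ that appears in the Plancherel computation and the estimate \eqref{psihatest} $|\widehat\psi(\xi)|\lesssim|\xi|^2(1+|\xi|)^{-N}$, which comes from the high-order vanishing of $\widehat\psi$ at $0$ and its rapid decay; one then verifies that $\sup_\xi$ of the product reproduces $c_{n,l}$. Your sketch entirely skips this $\xi$-uniformization, which is where the decay and vanishing-moment hypotheses on $\psi$ enter and is not merely bookkeeping.

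Note that the paper does remark (right after \eqref{support-assu}) that the compact-$x$-support assumption on $\psi$ is not needed in this section; so you could repair your argument by instead choosing $\psi$ with $\widehat\psi\in C^\infty_c$, in which case your Fourier localization claim becomes literally true and the lemma applies directly with $a\approx 2^n$, $b\approx 2^l$. But that choice has to be made explicit, and in any case the disjointness-in-$(s,t)$ observation is the cleaner route for passing the $k$-sum through the square.
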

\begin{proof}
 {Note} that for fixed $n,l$ the sets $\cV^{n,l}_k$, $k\!\in \bbZ$ are disjoint and,  similarly, for fixed
$n,\ell$ the sets $\cW^{n,\ell}_k$, $k\in \bbZ$ are disjoint. Hence
$\iint\!|\sum_k\! \cdots|^2 ds dt= \iint\sum_k| \cdots|^2 ds dt .$
Thus, if one  then interchanges sums and integrals and uses the
uniform $L^2$ boundedness of the operators $P_k$ one can  reduce the
proofs to showing uniform estimates for the individual operators
$T_k$
 (or $T_{k,1}$, $T_{k,2}$), involving the sets
 $\cV^{n,l}_k$, $\cW^{m,\ell}_k$.  {By Lemma}~\ref{scalinglemma} this is reduced
to use estimates for the operator $T_0$ (or $T_{0,1}$, $T_{0,2}$),
involving localizations to  the sets
$\cV^{n,l}_0$, $\cW^{m,\ell}_0$. Let
$$m(\xi,s)= \widehat \psi(\xi) s^{-1} (e^{i s\xi}-1).$$
All estimates in proposition \ref{L2est} follow via  Plancherel's
theorem from the following set \eqref{k=0-L2est} of inequalities.
First, with $c_{n,l}$  as in \eqref{VlL2},
\begin{subequations}\label{k=0-L2est}
\Be \label{V0L2} \sup_\xi \Big(\iint_{\cV_0^{n,l}} \frac{| m(\xi,
s)-m(\xi,t)|^2 }{|s-t|^{2\alpha}} ds\,dt\Big)^{1/2}\, \lc c_{n,l}\,.
\Ee
Next,
\begin{align} \label{W0-1L2}
&\sup_\xi \Big(\iint_{\cW_0^{n,\ell}} \frac{| m(\xi, s)|^2
}{|s-t|^{2\alpha}} ds\,dt\Big)^{1/2} \lc 2^{\ell/2} 2^{-n \alpha}
\min\{2^{-n/2}, 2^{n/2}\}\,,
\\
\label{W0-2L2} &\sup_\xi \Big(\iint_{\cW_0^{n,\ell}}
\frac{|m(\xi,t)|^2 }{|s-t|^{2\alpha}} ds\,dt\Big)^{1/2} \lc 2^{-n
(\alpha-\frac 12)} \min\{2^{-\ell/2}, 2^{\ell/2}\},
\end{align}
and (for $\ell+2\le n\le 0$) \Be\label{WL2}\sup_\xi
\Big(\iint_{\cW_0^{n,\ell}} \frac{| m(\xi, s)-m(\xi,t)|^2
}{|s-t|^{2\alpha}} ds\,dt\Big)^{1/2} \lc 2^{n(\frac 32-\alpha)}
2^{\frac\ell 2}\,. \Ee
\end{subequations}
We want to  {deduce \eqref{k=0-L2est}} from Lemma
\ref{sigmatauintegrallemma}. In view of the crucial  cancellation
property of $\psi$ we have \Be\label{psihatest} |\widehat
\psi(\xi)|\le C_N \frac{|\xi|^2} {(1+|\xi|)^N}\Ee for all $N$. Now
by a change of  {variables}
\[
\begin{aligned}& \Big(\iint_{\Omega} \frac{| m(\xi, s)-m(\xi,t)|^2 }{|s-t|^{2\alpha}}
ds\,dt \Big)^{1/2}
\\ &=|\xi|^\alpha|\widehat \psi(\xi)| \Big(\iint_{|\xi|\Omega}\Big |\frac{ e^{i\si}-1}{\si}
-\frac{ e^{i\tau}-1}{\tau} \Big|^2
\frac{d\si\,d\tau}{|\si-\tau|^{2\alpha}} \Big)^{1/2} .
\end{aligned} \]
Hence, by Lemma \ref{sigmatauintegrallemma} and \eqref{psihatest},
\begin{align*}& \Big(\iint_{\cV_0^{n,l}} \frac{| m(\xi, s)-m(\xi,t)|^2 }{|s-t|^{2\alpha}}
ds\,dt \Big)^{1/2}
\\ &\lc\frac{|\xi|^{\alpha+2}}{(1+|\xi|)^N}
\cdot \begin{cases} (2^{n}|\xi|)^{1/2} (2^l|\xi|)^{3/2-\alpha}
&\text{ if } 2^n|\xi|\le 1, \, l\le n+2,
\\
(2^n|\xi|)^{-1/2} (2^l|\xi|)^{3/2-\alpha} &\text{ if } 2^n|\xi|\ge
1, \, 2^l|\xi|\le 1,
\\
(2^n|\xi|)^{-1/2} (2^l|\xi|)^{-\alpha+1/2} &\text{ if } 2^n|\xi|\ge
1, \, 0\le l\le n+2,
\end{cases}
\end{align*} which implies \eqref{V0L2}.
The estimates \eqref{W0-1L2}, \eqref{W0-2L2}, \eqref{WL2} follow in
a similar way from Lemma \ref{sigmatauintegrallemma} and
\eqref{psihatest}.
\end{proof}
We finally note for further reference that summing the various
estimates in Proposition \ref{L2est} together with an application of
the Littlewood-Paley inequality (in $L^2$) yields the bound $\|\cT
F\|_2 \lc \|F\|_{L^2(\cH)}$.

\section{The $H^1 \to L^{1,\infty}$ bound}\label{hardy}
We shall follow the method outlined in \cite{sw} which has its root
in work by M.  Christ \cite{christ}.
We use a variant of the atomic decomposition  which also takes our
operator $T$ into account (by using the decomposition \eqref{Tdec}
and incorporating the Riesz potential operator in the atoms). The
approach here is based on the square-function characterization by
Chang and Fefferman \cite{crf} (in the one-parameter dilation
setting). {See} also \cite{se-stud} for an early  application to
endpoint estimates, and \cite{sw} for many  more references.

\subsection{\it Preliminaries} \label{preliminaries} Let $P_k$, $T_k$, $L_k$ as  in \eqref{Pk}, \eqref{Tk}, \eqref{Lk}{. We plan to} use the decomposition
\eqref{Tdec}. We consider the nontangential version of
 the Peetre maximal operators
  \begin{equation}\label{eq28}
\fM_k f(x)= \sup_{|h|\le 2^{-k}}|L_k f(x+h)|
\end{equation}
and the square function defined by \Be \label{fS} \fS f(x)=
\Big(\sum_{k\in \bbZ} |\fM_k f(x)|^2 \Big)^{1/2}.\Ee Then  (Peetre
\cite{peetre}) \Be\label{eq29} \|\fS f\|_{L^1} \lc \|f\|_{H^1} .\Ee
Let $\fJ_k$ be the set of dyadic intervals of length  $2^{-k}$ (i.e.
each interval is of the form $[n2^{-k}, (n+1)2^{-k})$ for some $n\in
\bbZ$). For $\mu\in \bbZ$ let  $$\cO_\mu= \{x: |\fS
f(x)|>2^{\mu}\}$$ and let $\fJ_k^\mu$ be the set of dyadic intervals
of length $2^{-k}$ with the property that $$|J\cap\cO_\mu|\ge |J|/2
\text{ and } |J \cap \cO_{\mu+1}|< |J| /2.$$ Clearly if $\fS f\in
L^1$ then every dyadic interval belongs to exactly one of the sets
$\fJ^\mu_k$.
We then have (\cite{crf}) \Be\label{basicL2} \sum_{k\in \bbZ}
\sum_{J\in \fJ^\mu_k} \|\bbone_J L_k f\|_2^2 \lc 2^{2\mu} \meas
(\cO_\mu) \Ee For completeness we include the argument for
\eqref{basicL2}.  {The relevant fact is} that $|L_kf(x)|\le \fM_k
f(z)$ for all $x,z\in J$, for each $J\in \fJ_k$.
Let
$$\cO_\mu^*= \{x: M_{HL} \bbone_{\cO_\mu} > 10^{-1}\}$$  where $M_{HL}$ stands for the Hardy-Littlewood maximal operator. Then  $$\meas(\cO_\mu^*)\lc \meas (\cO_\mu)$$ and we have $\cup_k  \cup_{J\in \fJ^\mu_k} J\subset \cO^*_\mu$.
Now
\begin{align*}
&\sum_{k\in \bbZ} \sum_{J\in \fJ^\mu_k} \|\bbone_J L_k f\|_2^2 \le
\sum_{k\in \bbZ} \sum_{J\in \fJ^\mu_k}2  \int_{J\setminus
\cO^{\mu+1}}|\fM_k f(x)|^2 dx
\\
&\le 2 \int_{\cO^*_\mu\setminus \cO^{\mu+1}} \sum_{k\in \bbZ} |\fM_k
f(x)|^2 dx \le 2^{2\mu+2} \meas(\cO_\mu^*) \le C 2^{2\mu}
\meas(\cO_\mu)
\end{align*} which establishes  \eqref{basicL2}.

Now we assign to each dyadic interval $J$ another dyadic interval
$I(J)$ containing $J$.  {If  $J\in \fJ^\mu_k$ then clearly $J\subset
\cO^*_\mu$. Let} $I(J)$ be the maximal dyadic interval containing
$J$ which is contained in $\cO_\mu^*$.
 {Set}
$$b^{\mu,I}_{k}(x) = \sum_{\substack {J\in \fJ_k^\mu:\\I(J)=I}}
L_kf(x)\bbone_J(x).
$$
 We write
 $L(I)=L$ if the length of a  dyadic interval $I$ is $2^{L}$.
  Also we let $\cI^\mu$ be the collection of all dyadic intervwhich are maximal and contained in $\cO^*_\mu$. By the maximality condition  the intervals in $\cI^\mu$ have disjoint interior. For future reference we note that if $J\in \fJ^\mu_k$ and $I(J)=I$ then $L(I)+k\ge 0$.

 {Set}, for $I\in \cI^\mu$,
\Be \label{gammadef}\gamma_{\mu,I}:=\Big( \sum_{\substack
{k:\\k+L(I)\ge 0}}\sum_{J\in \fJ^k_\mu} \| \bbone_J L_k
f\|_2^2\Big)^{1/2}.\Ee {We have}
\begin{align*}
&\sum_{I\in \cI^\mu} |I|^{1/2} \gamma_{\mu,I} \le  \Big(\sum_{I\in
\cI^\mu}|I|\Big)^{1/2}\Big(  \sum_{I\in \cI^\mu}
\gamma_{\mu,I}^2\Big)^{1/2}
\\
&\lc |\cO_\mu^*|^{1/2} (2^{2\mu} |\cO_\mu|)^{1/2} \lc 2^\mu
|\cO_\mu|
\end{align*}
and hence \Be \label{atomicest} \sum_{\mu\in \bbZ} \sum_{I\in
\cI^\mu} |I|^{1/2} \gamma_{\mu,I} \lc \sum_{\mu\in \bbZ} 2^\mu
|\cO_\mu| \lc \|\fS f\|_1\lc \|f\|_{H^1}, \Ee {which} is equivalent
to
\Be\label{b-atomicest} \sum_{\mu\in \bbZ} \sum_{I\in
\cI^\mu}|I|^{1/2}  \Big(\sum_{k\in \bbZ}
\|b_{k}^{\mu,I}\|_2^2\Big)^{1/2} \lc \|f\|_{H^1}. \Ee


\medskip

\subsection{\it Proof of the $H^1\to L^{1,\infty}$  inequality} Fix $\la>0$.
We  {claim} that \Be\label{wtgoal} \meas \big(\{x\in \bbR:
\,|Tf(x,\cdot, \cdot)|_{L^2(\bbR^2)}
>10\la \}\big) \lc \la^{-1} \sum_{\mu\in \bbZ} \sum_{I\in \cI^\mu} |I|^{1/2} \gamma_{\mu,I}
\Ee which implies the desired bound, by \eqref{atomicest}.

The first step is the definition of an exceptional set $\cE$.  Given
any $I,\mu$  {with $\mu\in \bbZ$, $I\in \fJ^{\mu}$, we} assign an
integer  $\ka(\mu,I)$ (depending on $\la$), defined  as
$$\ka(\mu,I)=\max \{L(I), \widetilde \vk(\mu,I)\}\, $$ where  the
"stopping time" $\widetilde \vk(\mu,I)$ is given by
\Be\label{tildeI}\widetilde  \vk(\mu,I)= \inf\{r\in \bbZ: \, 2^{r}
\ge \la^{-1}|I |^{1/2}\gamma_{\mu,I}\,\}.\Ee

For any $I,\mu$ satisfying $L(I)<\kappa(\mu, I)$  let $\cE_{\mu, I}$
be the interval of length $2^{\ka(\mu, I)+5}$,  {concentric with}
$I$ and let
$$\cE=\bigcup_{\substack{\mu, I:  I\in \cI^\mu\\L(I)<\kappa(\mu,I)} } \cE_{\mu,I}.$$  {For} any $I$ with $\ka(\mu, I)> L(I)$, we have
$2^{\kappa(\mu, I)-1}\le \la^{-1}|I|^{1/2}\ga_{\mu,I}$. Thus
\Be\label{excsetest} \meas(\cE)\lc \sum_{I,\mu:L(I)< \ka(\mu, I)}
2^{\ka(\mu,I)} \, \lc \sum_{I,\mu}
 \la^{-1}  |I|^{1/2}\gamma_{\mu,I}  \lc  \la^{-1}\|f\|_{H^1}.\Ee

Hence in order to prove \eqref{wtgoal} we only need to show \Be
\label{Tmeasc}\meas(x\in \cE^\complement:  \,|Tf(x,\cdot,
\cdot)|_{L^2(\bbR^2)} >10\la\}
 \lc \la^{-1}  \sum_{\mu,I} |I|^{1/2}\gamma_{\mu, I}.
 \Ee
 {By Minkowski's inequality  we have}
\Be
\begin{aligned}
&|Tf(x,\cdot, \cdot)|_{L^2(\bbR^2)} = \Big(\iint\Big |\sum_{\mu, I}
\sum_{-k\le L(I)}  P_k T_k b_{k}^{\mu,I} (x,s,t)\Big|^2 ds\,
dt\Big)^{1/2}
\\&\le \sum_{i=1}^4U_i(x) + \cV_{\cE}(x)+\cW_{2, \cE}(x) + V_*(x)+ W_{2,*}(x) +W_1(x)
\end{aligned}
\Ee where
\begin{subequations}
\begin{align}
U_1(x)&\!=\!\Big(\iint\Big| \sum_{\substack{n \\\ell\le n+2}}
\sum_{k:(s,t)\in \cV_k^{n,l} } \sum_{\substack{\mu, I:\\ I\in
\cI^\mu\\ -k+n\le L(I)}} P_k T_k b_{k}^{\mu,I} (x,s,t)\Big|^2ds\,
dt\Big)^{1/2},
\\
U_2(x)&\!=\!\Big(\iint\Big| \sum_{\substack{n\le 2\\\ell\le n-2}}
\sum_{k:(s,t)\in \cW_k^{n,\ell}}
 \sum_{\substack{\mu, I: I\in \cI^\mu\\ -k\le L(I)}}
P_k T_k b_{k}^{\mu,I} (x,s,t)\Big|^2ds\, dt\Big)^{1/2},
\end{align}
\begin{align}
U_{3}(x)&\!=\!\Big(\iint\Big| \sum_{\substack{n\ge 2\\\ell\le n-2}}
\sum_{k:(s,t)\in \cW_k^{n,\ell}}
 \sum_{\substack{\mu, I: I\in \cI^\mu\\ -k+n \le L(I)}}
P_k T_{k,1} b_{k}^{\mu,I} (x,s,t)\Big|^2ds\, dt\Big)^{1/2},
\\U_{4}(x)&\!=\!\Big(\iint\Big|
\sum_{\substack{n\ge 2\\\ell\le n-2}} \sum_{k:(s,t)\in
\cW_k^{n,\ell}}
 \sum_{\substack{\mu, I: I\in \cI^\mu\\ -k+\ell \le L(I)}}
P_k T_{k,2} b_{k}^{\mu,I} (x,s,t)\Big|^2ds\, dt\Big)^{1/2}
\end{align}
\end{subequations} and
\begin{subequations} \begin{align}
V_\cE(x)&= \Big(\iint\Big| \sum_{\substack{n,l :\\ n\ge \max\{0,
l-2\} }}\sum_{k:(s,t)\in \cV_k^{n,l} } \sum_{\substack{\mu, I:
\\I\in \cI^\mu\\ L(I)< -k+n\le \ka(\mu,I)}} P_k T_k b_{k}^{\mu,I}
(x,s,t)\Big|^2ds\, dt\Big)^{1/2},
\\
W_{2,\cE}(x)&= \Big(\iint\Big| \sum_{\substack{n,\ell :\\ n\ge
\max\{0, \ell+2\}}} \sum_{k:(s,t)\in \cW_k^{n,\ell} }
\sum_{\substack{\mu, I:\\ I\in \cI^\mu\\ L(I)< -k+\ell\le
\ka(\mu,I)}} P_k T_{k,2} b_{k}^{\mu,I} (x,s,t)\Big|^2ds\,
dt\Big)^{1/2}.
\end{align}
\end{subequations}
Furthermore
\begin{subequations}
\begin{align}
V_*(x)&= \Big(\iint\Big| \sum_{\substack{n,l :\\ n\ge \max\{0,
l-2\}} }\sum_{k:(s,t)\in \cV_k^{n,l} } \sum_{\substack{\mu, I:
\\I\in \cI^\mu\\  \ka(\mu,I)<-k+n}} P_k T_k b_{k}^{\mu,I}
(x,s,t)\Big|^2ds\, dt\Big)^{1/2},
\\
W_{2,*}(x)&= \Big(\iint\Big| \sum_{\substack{n,\ell :\\ n\ge
\max\{0, \ell+2\}}} \sum_{k:(s,t)\in \cW_k^{n,\ell} }
\sum_{\substack{\mu, I:\\ I\in \cI^\mu\\  \ka(\mu,I)< -k+\ell}} P_k
T_{k,2} b_{k}^{\mu,I} (x,s,t)\Big|^2ds\, dt\Big)^{1/2}
\end{align}
\end{subequations}
and finally \Be W_{1}(x)\!=\! \Big(\iint\Big|\!
\sum_{\substack{n,\ell :\\ n\ge \max\{0, \ell+2\}}} \sum_{k:(s,t)\in
\cW_k^{n,\ell} } \! \sum_{\substack{\mu, I:\\ I\in \cI^\mu\\  L(I)<
-k+n}}\! P_k T_{k,1} b_{k}^{\mu,I} (x,s,t)\Big|^2ds\, dt\Big)^{1/2}.
\Ee {The} quantity on the left hand side of \eqref{Tmeasc} is {not
greater than}
 \begin{align*}
& \sum_{i=1}^4 \meas(x\in \cE^\complement:  U_i(x) >\la\}+
\meas(x\in \cE^\complement:  W_1(x) >\la\}
\\
& +\meas(x\in \cE^\complement:  V_*(x) >\la\}+
 \meas(x\in \cE^\complement:  W_{2,*}(x) >\la\}
\\& + \meas(x\in \cE^\complement:  V_\cE(x) >\la\}+
 \meas(x\in \cE^\complement:  W_{2,\cE}(x) >\la\}.
  \end{align*}
{The} terms $V_\cE$ and $W_{2,\cE}$ are supported in $\cE$ and are
thus irrelevant for the estimate \eqref{Tmeasc}. Thus \eqref{Tmeasc}
follows, by Tshebyshev's inequality, from the bounds
\begin{align}
  \label{UiL1}\sum_{i=1}^4 \|U_i\|_1&\lc \sum_{\mu,I}|I|^{1/2}\gamma_{\mu,I},
\\
\label{V*L2}\|V_*\|_2^2 &\lc \la
\sum_{\mu,I}|I|^{1/2}\gamma_{\mu,I},
\\
\label{W*L2}\|W_{2,*}\|_2^2 &\lc \la
\sum_{\mu,I}|I|^{1/2}\gamma_{\mu,I},
\\
\label{W1L1}\|W_1\|_1 & \lc  \sum_{\mu,I}|I|^{1/2}\gamma_{\mu,I}.
\end{align}

\subsection*{\it Proof of \eqref{UiL1}}
 {For} $(s,t)\in \cV_k^{n,l} $ and $-k+n\le L(I)$ the function $x\mapsto
P_k T_k b_{k}^{\mu,I} (x,s,t)$ is supported in a tenfold expansion
$I^*$ of $I$. We use Minkowski's inequality for the $n,l,\mu, I$
sums, and  then Cauchy-Schwarz on $I^*$ to get
\begin{align*}
\|U_1\|_1 &\le \sum_{\substack{n \\ l \le n+2} }
\sum_{\substack{\mu, I:\\ I\in \cI^\mu}}\Big\| \Big(\iint\Big|
\sum_{\substack{k:(s,t)\in \cV_k^{n,l} \\ -k+n\le L(I)}} P_k T_k
b_{k}^{\mu,I} (\cdot,s,t) \Big|^2ds\, dt\Big)^{1/2}\Big\|_1
\\&{\lesssim} \sum_{\substack{n \\ l \le n+2} }
\sum_{\substack{\mu, I:\\ I\in \cI^\mu} }|I|^{1/2} \Big(\iint\Big\|
\sum_{\substack{k:(s,t)\in \cV_k^{n,l} \\ -k+n\le L(I)}} P_k T_k
b_{k}^{\mu,I} (\cdot,s,t)\Big\|_2^2ds\, dt\Big)^{1/2}.
\end{align*}
 Denote by $c_{n,l}$ the constants defined in
 \eqref{VlL2}. {Then}
$\sum_{n} \sum_{l\le n+2} c_{n,l} < \infty$. Now we apply
\eqref{VlL2} to get
\begin{align*}
\|U_1\|_1  & \le  \sum_{\substack{n \\ l \le n+2} }  c_{n,l}
\sum_{\substack{\mu, I:\\ I\in \cI^\mu} }|I|^{1/2}
\Big(\sum_{k:-k\le L(I)}
\big\|b_{k}^{\mu,I} \big\|_2^2\Big)^{1/2} \\
& \lc \sum_{\substack{\mu, I:\\ I\in \cI^\mu} }|I|^{1/2}
\Big(\sum_{k:-k\le L(I)} \big\|b_{k}^{\mu,I} \big\|_2^2\Big)^{1/2}.
\end{align*}
We apply {a} similar {argument} to estimate the  $L^1$ norms
of~$U_2$, $U_3$, $U_4$. For~$U_2$ we get
\begin{align*} \|U_2\|_1&\le \sum_{\substack{n\le 2\\\ell\le n-2}}
\sum_{\substack{\mu, I:\\ I\in \cI^\mu} }|I|^{1/2} \Big( \iint\Big\|
\sum_{\substack{k:(s,t)\in \cW_k^{n,l} \\ -k\le L(I)}} P_k T_k
b_{k}^{\mu,I} (\cdot,s,t)\Big\|_2^2ds\, dt \Big)^{1/2}
\\
&\lc\sum_{\substack{\mu, I:\\ I\in \cI^\mu} }|I|^{1/2}
\Big(\sum_{k:-k\le L(I)} \big\| b_{k}^{\mu,I} \big\|_2^2\Big)^{1/2}
\end{align*}
where we used \eqref{WellL2}. {By \eqref{WellL2-1}}
\begin{align*}
\|U_3\|_1 &\le \sum_{\substack{\mu, I:\\ I\in \cI^\mu}
}|I|^{1/2}\sum_{\substack{n\ge 2\\\ell\le n-2}} \Big(\iint\Big\|
\sum_{\substack{k:(s,t)\in \cW_k^{n,l} \\ -k+n\le L(I)}} P_k T_{k,1}
b_{k}^{\mu,I} (\cdot,s,t)\Big\|_2^2ds\, dt\Big)^{1/2}
\\
&\lc\sum_{\substack{\mu, I:\\ I\in \cI^\mu} }|I|^{1/2}
\Big(\sum_{k:-k\le L(I)} \big\| b_{k}^{\mu,I} \big\|_2^2\Big)^{1/2},
\end{align*}
{and, by \eqref{WellL2-2},}
\begin{align*}
\|U_4\|_1 &\le \sum_{\substack{\mu, I:\\ I\in \cI^\mu}
}|I|^{1/2}\sum_{\substack{n\ge 2\\\ell\le n-2}} \Big( \iint\Big\|
\sum_{\substack{k:(s,t)\in \cW_k^{n,l} \\ -k+l\le L(I)}} P_k T_{k,2}
b_{k}^{\mu,I} (\cdot,s,t)\Big\|_2^2ds\, dt \Big)^{1/2}
\\
&\lc\sum_{\substack{\mu, I:\\ I\in \cI^\mu} }|I|^{1/2}
\Big(\sum_{k:-k\le L(I)} \big\| b_{k}^{\mu,I} \big\|_2^2\Big)^{1/2}.
\end{align*}
{Finally} we use $\sum_{k} \big\| b_{k}^{\mu,I} \big\|_2^2 =
(\gamma_{\mu}^I)^2$  in all estimates above to complete the proof of
\eqref{UiL1}.


\subsection*{\it Proof of \eqref{V*L2}}
We have,  by Minkowski integral inequality,
\begin{align*}
V_*(x) &\le\!  \sum_{\substack{n,l \\\ n\ge \max\{0, l-2 \}}} \!
\!\Big(\iint_{\cV_k^{n,l}}\Big| \!\sum_{k:(s,t)\in \cV^{n,l}_k}\!
\sum_{\substack{\mu, I: \\I\in \cI^\mu\\  \ka(\mu,I)<-k+n}}\!\! P_k
T_k b_{k}^{\mu,I} (x,s,t)\Big|^2ds\, dt\Big)^{1/2}
\\
&\le\!  \sum_{\substack{n,l \\\ n\ge \max\{0, l-2 \}}}  \Big(\sum_k
\iint_{\cV_k^{n,l}}\Big| \sum_{\substack{\mu, I: \\I\in \cI^\mu\\
\ka(\mu,I)<-k+n}} P_k T_k b_{k}^{\mu,I} (x,s,t)\Big|^2ds\,
dt\Big)^{1/2}
\end{align*}
and so, by Fubini,
\begin{equation*}
\|V_*\|_2 \lc \sum_{\substack{n,l :\\ n\ge \max\{0, l-2\}}}
 \Big(\sum_k\iint_{\cV^{n,l}_k} \Big\|
\sum_{\substack{\mu, I: I\in \cI^\mu\\  \ka(\mu,I)<-k+n}} P_k T_k
b_{k}^{\mu,I} (\cdot,s,t) \Big\|_2^2 \,ds\, dt\Big)^{1/2} .
\end{equation*}
By \eqref{VlL2},
\begin{multline*}
\|V_*\|_2\lc \sum_{l} \min\{2^{-l(\alpha-1/2)}, 2^{l(3/2-\alpha)}\}
\sum_{n\ge \max\{0,l-3\}} 2^{-n/2}\\
{\times} \Big(\sum_k \Big\| \sum_{\substack{\mu, I: I\in \cI^\mu\\
\ka(\mu,I)<-k+n}} b_{k}^{\mu,I} \Big\|_2^2\Big)^{1/2}.
\end{multline*}
For fixed $k$,
\begin{align*}
&\Big\|\sum_\mu \sum_{\substack{I\in \cI^\mu:\\ \ka(\mu, I)< -k+n} }
b_k^{\mu,I}\Big\|_2^2 = \Big\| \sum_\mu\sum_{\substack{\mu,I:\\
\ka(\mu, I)< -k+n}}  \sum_{\substack {J\in \fJ^\mu_k\\ I(J)=I}}
\bbone_J L_k f \Big\|_2^2
\\&=\sum_\mu\sum_{\substack{I\in \cI^\mu\\ \ka(\mu, I)< -k+n} }
 \sum_{\substack {J\in \fJ^\mu_k\\ I(J)=I}}
\big \|\bbone_J L_k f\big \|_2^2
\end{align*}
{because each dyadic interval of length~$2^{-k}$ is contained in
exactly one family~$\fJ^\mu_k$, and for fixed $\mu$ the intervals in
$\cI^\mu$ have disjoint interior.} Now, since $1/2<\alpha<3/2$ , we
can sum in $l$ and obtain
\begin{align*}
\|V_*\|_2^2 &\lc \Big( \sum_{n\ge 0} 2^{-n}\Big(\sum_k
\sum_{\substack{\mu, I: I\in \cI^\mu\\  \ka(\mu,I)<-k+n}}
\sum_{\substack {J\in \fJ^\mu_k\\ I(J)=I}}
 \big\|\bbone_J L_k f\big\|_2^2\Big)^{1/2}\Big)
\\
&\lc \sum_k \sum_{\substack{\mu, I:\\ I\in \cI^\mu}}
2^{-(\ka(\mu,I)+k)} \sum_{\substack {J\in \fJ^\mu_k\\ I(J)=I}}
 \big\|\bbone_J L_k f\big\|_2^2
\\
&\lc \sum_{\substack{\mu, I:\\ I\in \cI^\mu}} 2^{-(\ka(\mu,I)-L(I))}
\sum_{\substack {J\in \fJ^\mu_k\\ I(J)=I}} 2^{-(L(I)+k)}
 \big\|\bbone_JL_k f\big\|_2^2
\end{align*}
and hence \Be \label{basicV*est}
\begin{aligned}
\|V_*\|_2^2&\lc\sum_{\mu, I:I\in \cI^\mu} 2^{L(I)-\kappa(\mu, I)}
\gamma_{\mu,I}^2
\\&\lc\sum_\mu \Big[
\sum_{\substack{ I\in \cI^\mu\\
\ka(\mu,I)=L(I)}}
 \gamma_{\mu,I}^2+
\sum_{\substack{ I\in \cI^\mu\\
\ka(\mu,I)>L(I)}} 2^{L(I)-\ka(\mu,I)} \gamma_{\mu,I}^2\Big].
\end{aligned}
\Ee If $\ka(\mu,I)=L(I)$ then $\widetilde \vk(\mu,I)\le L(I)$ and by
definition of $\widetilde \vk(\mu, I)$ we {then have}
 $2^{L(I)}\ge 2^{\widetilde \vk(\mu,I)} \ge \la^{-1}|I|^{1/2}\gamma_{\mu,I}${. Thus}
$\gamma_{\mu,I}\le |I|^{1/2} \la$. Therefore
\begin{subequations}\label{Lversuska}
\Be\label{Lversuska=} \sum_{\substack{I\in \cI^\mu:\\
\ka(\mu,I)=L(I)} } \gamma_{\mu,I}^2 \le \la \sum_{I\in \cI^\mu}
|I|^{1/2} \gamma_{\mu,I}.\Ee If $\ka(\mu,I)> L(I)$ then
$\ka(\mu,I)=\widetilde \vk(\mu,I)$ and thus $2^{-\ka(\mu,I)} \le \la
|I|^{-1/2}\gamma_{\mu,I}^{-1}$, again by the definition of
$\widetilde \vk(\mu,I)$.
 Hence
\Be \label{Lversuska<}\sum_{\substack {I\in \cI^\mu\\
\ka(\mu,I)>L(I)} }2^{L(I)-\ka(\mu,I)} \gamma_{\mu,I}^2 \lc \la
\sum_{I\in \cI^\mu}  |I|^{1/2}\gamma_{\mu, I}.\Ee
\end{subequations}
Now combining \eqref{basicV*est} with \eqref{Lversuska=},
\eqref{Lversuska<} completes the proof of \eqref{V*L2}.

\subsection*{\it Proof of \eqref{W*L2}} This proof follows the lines of that of \eqref{V*L2}.
Notice that the conditions $\ell\ge \ka(\mu,I)+k \ge  L(I)+k$  imply
that $\ell\ge 0$.
 Now \begin{align*}
W_{2,*}(x) & \le  \\
&   \sum_{\substack{n,\ell \\ n\ge \max\{0, \ell+2\}}} \Big(\sum_k
\iint_{\cW_k^{n,l}}\Big| \sum_{\substack{\mu, I: I\in \cI^\mu\\
\ka(\mu,I)< -k+\ell}} P_k T_{k,2} b_{k}^{\mu,I} (x,s,t)\Big|^2ds\,
dt \Big)^{1/2}
\end{align*}
and so
\begin{align*}
\|W_{2,*}\|_2 & \le \\
&  \sum_{\substack{n,\ell \\ n\ge \max\{0, \ell+2\}}}
\Big(\sum_{k}\iint_{\cW_k^{n,l}}\Big\| \sum_{\substack{\mu, I: I\in
\cI^\mu\\  \ka(\mu,I)< -k+\ell}} P_k T_{k,2} b_{k}^{\mu,I}
(\cdot,s,t)\Big\|_2^2ds\, dt \Big)^{1/2}\,.
  \end{align*}
 By \eqref{WellL2-2}

\begin{align*}
\|W_{2,*}\|_2^2  & \lc
 \sum_{n\ge 0} 2^{-n(2\alpha-1)} \sum_{\ell\ge 0} 2^{-\ell}
 \sum_k \Big\|
\sum_{\substack{\mu, I:I\in \cI^\mu\\  \ka(\mu,I)< -k+\ell}}
 b_{k}^{\mu,I} \Big\|_2^2
  \\&\lc
\sum_{\ell\ge 0} 2^{-\ell}  \sum_k
 \sum_{\substack{\mu, I:I\in \cI^\mu\\  \ka(\mu,I)< -k+\ell}}
 \sum_{\substack{J\in \fJ^\mu_k\\ I(J)=I}} \big\|\bbone_J L_k
 f\big\|_2^2\,,
 \end{align*}
{which} implies
 \begin{align*}
 \|W_{2,*}\|_2^2
 &\lc \sum_k\sum_{\substack{\mu, I:\\ I\in \cI^\mu}} 2^{-k-\ka(\mu,I)}\sum_{\substack{J\in \fJ_\mu^k\\I(J)=I}}\|\bbone_J L_kf\|_2^2
 \lc \sum_{\substack {\mu, I:\\ I\in \cI^\mu} } 2^{L(I)-\ka(\mu,I)} \gamma^2_{\mu,I}
 \end{align*}
 and this expression has been already estimated by $C\la \sum_{\mu, I} |I|^{1/2} \gamma_{\mu,I}$,
{by~\eqref{basicV*est}, \eqref{Lversuska=} and \eqref{Lversuska<}}.
This finishes the proof of
 \eqref{W*L2}.

\subsection*{\it Proof of \eqref{W1L1}}
We now take advantage of the fact that the $L^2$ bounds for
$T_{k,1}$ in \eqref{WellL2-1} are somewhat better then the
corresponding bounds for $T_{k,2}$ in \eqref{WellL2-2}. This allows
us to invoque a straightforward $L^1$ estimate for $W_1$ as opposed
to the $L^2$ arguments used for $V_*$ and $W_{2,*}$. We have

\begin{align*}
&\|W_1\|_1\\& =  \Big\| \Big(\iint\Big| \sum_{\substack{n,\ell :\\
n\ge \max\{0, \ell+2\}}}
 \sum_{k:(s,t)\in \cW_k^{n,\ell} }
\sum_{\substack{\mu, I: I\in \cI^\mu\\  L(I)< -k+n}} P_k T_{k,1}
b_{k}^{\mu,I} (\cdot,s,t)\Big|^2ds\, dt\Big)^{1/2}\Big\|_1
\\
&\lc \sum_{\substack{n,\ell :\\ n\ge \max\{0, \ell+2\}}}
\sum_{\substack{\mu, I:\\ I\in \cI^\mu}} \sum_{k: L(I)-n<-k\le L(I)}
\Big\| \Big(\iint_{\cW_k^{n,\ell}}\Big| P_k T_{k,1} b_{k}^{\mu,I}
(\cdot,s,t)\Big|^2ds\, dt\Big)^{1/2}\Big\|_1.
\end{align*}
Now observe that the expression inside $\|\cdots\|_1$ is supported
in an interval of length $2^{-k+n+5}$, {concentric with} $I$. Hence,
by the Cauchy-Schwarz {inequality and Fubini}
\begin{multline*}
\|W_1\|_1\lc\\
\sum_{\substack{n,\ell :\\ n\ge \max\{0, \ell+2\}}}
\sum_{\substack{\mu, I:\\ I\in \cI^\mu}}
\sum_{\substack{L(I)-n<\\-k\le L(I)}}2^{\frac{n-k}2}
\Big(\iint_{\cW_k^{n,\ell}}\big\| P_k T_{k,1} b_{k}^{\mu,I}
(\cdot,s,t)\big\|_2^2ds\, dt\Big)^{1/2}.
\end{multline*}
By \eqref{WellL2-1},
\begin{align*}\|W_1\|_1&\lc
\sum_{\substack{n,\ell :\\ n\ge \max\{0, \ell+2\}}} 2^{\ell/2}
2^{-n(\alpha+\frac 12)} \sum_{\substack{\mu, I:\\ I\in \cI^\mu}}
\sum_{\substack{L(I)-n<\\-k\le L(I)}}2^{\frac{n-k}2}
\|b_{k}^{\mu,I}\|_2
\\
&\lc \sum_{n\ge 0} 2^{-n(\alpha-1/2)}\sum_{\substack{\mu, I:\\ I\in
\cI^\mu}}\sum_{-k\le L(I)} 2^{-(k+L(I))/2} |I|^{1/2}
\|b_{k}^{\mu,I}\|_2
\end{align*}
and it follows easily that
$$\|W_1\|_1 \lc
\sum_\mu\sum_{I\in \cI^\mu}|I|^{1/2} \gamma_{\mu,I}\,,$$ as claimed.
\qed

\bigskip

\section{$L^p$ estimates}\label{Lpsect}
\subsection{\it Proof of Theorem \ref{sobchar}, via  estimates on Hardy spaces.}
The lower bounds have already been established in
\S\ref{lowerbounds}. For the upper bounds we need to {distinguish}
the {case} $1<p<2$ (for which the result is an immediate consequence
of what we have already proved) and the case $2<p<\infty$.

\subsubsection{The case $1<p<2$}
 For the upper $L^p$ bounds we note that
\Be\label{Lpbd}\|S_\alpha (\cD^{-\alpha} f) \|_{L^{p} } \le C_p
\|f \|_{L^p} {,\quad 1<p<2,} \Ee follows by real interpolation
(\cite{frs}) from the already proved bounds
\begin{align*}
&\|S_\alpha (\cD^{-\alpha} f) \|_{L^{1,\infty} } \lc \|f \|_{H^1},
\\
&\|S_\alpha (\cD^{-\alpha} f) \|_{L^{2}} \lc \|f \|_{L^2}.
\end{align*}

\subsubsection{The case $2<p<\infty$}
{Consider} the operator $\cT$ acting on the
$\cH=L^2(\bbR\times\bbR)$ valued functions $F$ by \Be
\label{Tdualdef} \cT F(x) =  \iint_{|t|\le |s|}
\frac{s^{-1}\Delta_{-s}\cD^{-\alpha} F(x,s,t)-t^{-1}\Delta_{-t}
\cD^{-\alpha} F(x,s,t) }{|s-t|^{\alpha}}  \, ds\, dt\,. \Ee and
observe that \eqref{Lpbd} for $2\le p<\infty$ follows {by duality}
from \Be\label{Lpbddual}\|\cT F \|_{L^{p}} \le C_p
\|F\|_{L^p(\cH)}, \quad 1<p\le 2.\Ee This can be deduced by real
interpolation  from
 \Be\label{L2dual}
 \|\cT F \|_{L^{2}} \lc  \|F\|_{L^2(\cH)}\Ee
 (which is equivalent to the case $p=2$ of \eqref{Lpbd})
 and
 \begin{theorem}\label{dualH1thm}
$$ 
\|\cT F\|_{L^{1,\infty}}\lc \|F\|_{H^1(\cH)} .$$
\end{theorem}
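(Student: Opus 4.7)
The proof will parallel the scheme used in Section~\ref{hardy} for Theorem~\ref{sobhardy}, now applied to the adjoint operator~$\cT$ with input $F$ in the vector-valued Hardy space~$H^1(\cH)$, with $\cH=L^2(\bbR\times\bbR)$. Taking formal adjoints of the decomposition~\eqref{Tdec}, we write $\cT = \sum_k L_k^* \cT_k^{\sharp} P_k^*$, where $P_k^*$ acts slicewise in $(s,t)$ on $\cH$-valued functions, $\cT_k^\sharp$ integrates out $(s,t)$ against the adjoint of the convolution defining $T_k(\cdot,s,t)$, and $\cT_k^\sharp=\cT_{k,1}^\sharp-\cT_{k,2}^\sharp$ dualizing \eqref{Tk1}--\eqref{Tk2}. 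Since we may arrange the convolution kernels~$\psi$,~$\varphi$ to be real and (essentially) even, the operators $L_k^*$, $P_k^*$ are of the same form as $L_k$, $P_k$.

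The first step is the vector-valued square-function characterization $\|F\|_{H^1(\cH)}\approx \|\fS_\cH F\|_1$ with $\fS_\cH F(x) = \bigl(\sum_k |\fM_k F(x)|_\cH^2\bigr)^{1/2}$, where the Peetre maximal operator~$\fM_k$ is applied componentwise in~$\cH$. With $\cO_\mu$, $\cO_\mu^*$, $\fJ_k^\mu$, $\cI^\mu$, $I(J)$ defined as in \S\ref{preliminaries} but relative to $\fS_\cH$, we split
\begin{equation*}
L_k F = \sum_{\mu}\sum_{I\in\cI^\mu} B_k^{\mu,I}, \qquad B_k^{\mu,I}(x) = \sum_{J\in\fJ_k^\mu:\,I(J)=I} \bbone_J(x)\, L_k F(x),
\end{equation*}
with $\cH$-valued atoms $B_k^{\mu,I}$ supported in a tenfold dilate of~$I$ and vanishing when $-k>L(I)$. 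Applying \eqref{basicL2} to the scalar function $|L_kF|_\cH$ in place of $|L_kf|$ (the indicator $\bbone_J$ commutes with $|\cdot|_\cH$) yields the atomic-type bound
\begin{equation*}
\sum_{\mu}\sum_{I\in\cI^\mu} |I|^{1/2}\,\Gamma_{\mu,I} \lesssim \|F\|_{H^1(\cH)}, \qquad \Gamma_{\mu,I} := \Big(\sum_{k:\,-k\le L(I)} \|B_k^{\mu,I}\|_{L^2(\cH)}^2\Big)^{1/2}.
\end{equation*}

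With this in place we run verbatim the stopping-time construction of \S\ref{hardy}. Set $\kappa(\mu,I)=\max\{L(I),\widetilde\vk(\mu,I)\}$ with $\widetilde\vk(\mu,I)=\inf\{r\in\bbZ:\,2^r\ge \la^{-1}|I|^{1/2}\Gamma_{\mu,I}\}$, and form the exceptional set $\cE$ exactly as before; its measure is $O(\la^{-1}\|F\|_{H^1(\cH)})$. We then split $\cT F$ into six pieces $U_1,\dots,U_4, V_*, W_{2,*}, W_1$ (plus pieces $V_\cE, W_{2,\cE}$ supported in~$\cE$) mirroring the decomposition in \S\ref{hardy}, with $b_k^{\mu,I}$ replaced by $B_k^{\mu,I}$ and with $\cT_k^\sharp, \cT_{k,1}^\sharp,\cT_{k,2}^\sharp$ in place of $T_k, T_{k,1}, T_{k,2}$. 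The bounds in Proposition~\ref{L2est} are scalar-level statements of the form $\ell^2(L^2(dx))\to L^2(dx;\cH)$ for fixed scalar convolutions, hence dualize to identical bounds $L^2(dx;\cH)\to \ell^2(L^2(dx))$ for the compositions $L_k^* P_k^* \cT_k^\sharp$ restricted to~$\cV_k^{n,l}$ or $\cW_k^{n,\ell}$, with the same constants $c_{n,l}$ and the analogous factors from \eqref{WellL2-1}--\eqref{WellL2}. Using these dualized estimates, the four bounds \eqref{UiL1}--\eqref{W1L1} transfer with essentially no change (Minkowski on the $n,l,\mu,I$ sums plus Cauchy--Schwarz on the support of each atom), and the stopping-time identities \eqref{Lversuska=}--\eqref{Lversuska<} conclude \eqref{V*L2}, \eqref{W*L2} as before. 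The weak-type claim $\meas(\{|\cT F|>10\la\})\lc \la^{-1}\|F\|_{H^1(\cH)}$ then follows by Tshebyshev.

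The main obstacle is verifying the vector-valued $H^1(\cH)$ square-function characterization and the Peetre maximal estimate~\eqref{eq29} in the $\cH$-valued setting; these are standard but must be invoked explicitly. Everything else in the argument of \S\ref{hardy} depends only on the Hilbert-space norm $|\cdot|_\cH$ of the atoms and the dualized $L^2$ bounds from Proposition~\ref{L2est}, so the scalar-case proof transfers. One secondary point is to check that the better $T_{k,1}$-bound \eqref{WellL2-1} dualizes to a correspondingly better bound for $\cT_{k,1}^\sharp$, which is needed to justify the $L^1$ (rather than $L^2$) treatment of the $W_1$ piece.
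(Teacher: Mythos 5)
Your proposal matches the paper's intended approach: the paper itself states only that the proof of Theorem~\ref{dualH1thm} ``is essentially the same as the proof of Theorem~\ref{sobhardy}, with appropriate notational modifications,'' referring explicitly to the $\cH$-valued Peetre maximal function in place of \eqref{eq28}, and you have filled in precisely the notational modifications the paper has in mind (vector-valued square function characterization of $H^1(\cH)$, $\cH$-valued atoms $B_k^{\mu,I}$, dualized $L^2$ estimates from Proposition~\ref{L2est}, and the same stopping-time construction).

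One point you should fix before writing this out in full. You state the dual decomposition as $\cT=\sum_k L_k^*\cT_k^\sharp P_k^*$ (so $P_k^*$ is applied first, $L_k^*$ last), but you then decompose $L_kF=\sum_{\mu,I}B_k^{\mu,I}$ and feed those pieces into $\cT_k^\sharp$. This does not match your own grouping: in $\sum_k L_k^*\cT_k^\sharp P_k^* F$ the operator $\cT_k^\sharp$ acts on $P_k^*F$, not on $L_kF$. Since the relevant Fourier multipliers are scalar and commute (and, as you note, $\psi$ and $\varphi$ can be taken real and even so that $L_k^*=L_k$, $P_k^*=P_k$ componentwise), you should regroup as $\cT F=\sum_k P_k\,T_k^*\,L_kF$. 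This order is not merely cosmetic: the outermost operator must be $P_k$ (whose kernel $\psi_k$ is genuinely compactly supported by \eqref{support-assu}) so that the pieces $P_k\,T_k^*\,(\bbone_{\cV_k^{n,l}}\,B_k^{\mu,I})$ etc.\ are supported in a bounded dilate of $I$ when $-k+n\le L(I)$; this compact support is what drives the $L^1$ bounds for the $U_i$ terms via Cauchy--Schwarz on $I^*$. With $L_k^*$ on the outside that support property would fail, since the kernel of $L_k$ is only Schwartz, not compactly supported. Once the grouping is corrected, the dualized $L^2$ bounds you need are of the form $\bigl\|\sum_k P_kT_k^*(\bbone_{\cV_k^{n,l}}G_k)\bigr\|_2\lc c_{n,l}\bigl\|(\sum_k|G_k|_\cH^2)^{1/2}\bigr\|_2$; these follow by duality from the same pointwise multiplier estimates \eqref{V0L2}--\eqref{WL2} used to prove Proposition~\ref{L2est}, so your remark that the constants transfer unchanged is correct.
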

This result follows from
\begin{multline} \label{dualhardy}
\meas (\{x: |\cT F(x)| >\la \})  \\ \le \frac{C}{\la} \,
\Big\|\Big(\sum_{k\in \bbZ} \sup_{|h|\le 2^{-k}}  \iint |L_k
F(\cdot+h,s,t) |^2 ds\, dt \Big)^{1/2} \Big\|_1
\end{multline}
where the $L^1$ norm on the right hand side involves a version of
the maximal square function $\fS F$ in \eqref{fS}, but for
$\cH$-valued functions $F$. {More precisely, in~\eqref{eq28} one
should replace the absolute value by the norm in~$\cH$. Then
Peetre's estimate~\eqref{eq29} holds in this context.} The proof of
\eqref{dualhardy} will be omitted since it is essentially the same
as the proof of Theorem~\ref{sobhardy}, with appropriate notational
modifications.

\subsection{\it An alternative  approach to Theorem \ref{sobchar}}\label{altLpproof}
 There is an  alternative (more straightforward and direct, but not less lengthy) approach  to
Theorem \ref{sobchar} which bypasses Theorem \ref{sobhardy}.

To be specific we let $\phi$  be a $C^\infty$ function supported in
$\{\xi:1/2<|\xi|\le 2\}$ and let $\Phi=\cF^{-1}[\phi]$. Let
$K_k(x,s,t )$ be defined by
$$
\widehat K_k(\xi,s,t)=2^{-k\alpha} |s-t|^{-\alpha}\phi(2^{-k}|\xi|)
\Big( \frac{e^{is\xi}-1}{s} - \frac{e^{it\xi}-1}{t}\Big)\,.
$$
By Littlewood-Paley theory one reduces the proof of Theorem
\ref{sobchar} to the following $L^p$  inequalities for $1<p<2$:
\[
\Big\| \Big(\sum_{k\in \bbZ}  \iint_{|t|\le |s|}\Big | \int
K_k(\cdot-y,s,t) f_k(y) dy\Big |^2ds dt\Big)^{1/2}\Big\|_p \le C_p
\Big\|\Big(\sum_k|f_k|^2\Big)^{1/2} \Big\|_p,
\]
and
\begin{multline*}\Big\| \Big(\sum_{k\in \bbZ} \Big|  \iint_{|t|\le |s|}  \int K_k(\cdot-y,s,t)F_k(y, s,t) dy\,ds\, dt\Big|^2 \Big)^{1/2}\Big\|_p
\\
\le C_p \Big\|\Big(\sum_k\iint |F_k(\cdot,s,t)|^2 ds\, dt\Big)^{1/2}
\Big\|_p.
\end{multline*}
One decomposes, for each $k$, the half plane   $\{|t|\le |s|\}$ as a
union of $\cV^{n,l}_k$ and $\cW^{n,\ell}_k$,  as in \S\ref{L2sect}.
One then aims to prove, for $1<p\le 2$, {that there is} $\eps(p)>0$,
{such that}
\begin{subequations}\label{p-epsgain}
\begin{multline}
\Big\| \Big(\sum_{k\in \bbZ}  \iint_{\cV^{n,l}_k}\Big| \int
K_k(\cdot-y,s,t) f_k(y) dy \Big|^2ds \,dt\Big)^{1/2}\Big\|_p \\ \le
C_p 2^{-(|n|+|l|)\eps(p)} \Big\|\Big(\sum_k|f_k|^2\Big)^{1/2}
\Big\|_p,
\end{multline}
\begin{multline}
\Big\| \Big(\sum_{k\in \bbZ}  \iint_{\cW^{n,\ell}_k}\Big| \int
K_k(\cdot-y,s,t) f_k(y) dy \Big|^2ds\, dt\Big)^{1/2}\Big\|_p\\ \le
C_p 2^{-(|n|+|l|)\eps(p)} \Big\|\Big(\sum_k|f_k|^2\Big)^{1/2}
\Big\|_p,
\end{multline}
\end{subequations}
and also the dual versions (with $\cH=L^2(\bbR\times\bbR)$)
\begin{subequations}\label{p-epsgain-dual}
\begin{multline}
\Big\| \Big(\sum_{k\in \bbZ} \Big| \iint_{\cV^{n,l}_k} \int
K_k(\cdot-y,s,t) F_k(y,s,t) dy \,ds\, dt\Big|^2\Big)^{1/2}\Big\|_p
\\ \le C_p 2^{-(|n|+|l|)\eps(p)}
\Big\|\Big(\sum_k|F_k|_{\cH}^2\Big)^{1/2} \Big\|_p,
\end{multline}
\begin{multline}
\Big\| \Big(\sum_{k\in \bbZ} \Big| \iint_{\cW^{n,\ell}_k}\int
K_k(\cdot-y,s,t) F_k(y,s,t) dy \,ds \,dt\Big|^2\Big)^{1/2}\Big\|_p
\\ \le C_p 2^{-(|n|+|\ell|)\eps(p)}
\Big\|\Big(\sum_k|F_k|_{\cH}^2\Big)^{1/2} \Big\|_p.
\end{multline}
\end{subequations}
For $p=2$ such estimates follow from \S\ref{L2sect}. For $p=1$ one
proves slightly weaker $L^1\to L^{1,\infty}$ inequalities, with
constants $O(1+|n|+|l|)$ and $O(1+|n|+|\ell|)$, respectively. These
follow if one checks the {H\"ormander condition on the
kernel~$K_{k}$,} \cf. \cite{hoerm} and \cite{stein-book}, namely
\begin{subequations}
\begin{multline}\label{VlHoer}
\int_{|x|\ge 2h} \Big(\sum_{k\in \bbZ} \iint_{\cV_k^{n,l}} \big|K_k(x+h,s,t)-K_k(x,s,t)|^2 ds\, dt\Big)^{1/2} dx \\
\lc  1+|n|+|l|
\end{multline}
and
\begin{multline}\label{WellHoer}
\int_{|x|\ge 2h} \Big(\sum_{k\in \bbZ} \iint_{\cW_k^{n,\ell}} \big|K_k(x+h,s,t)-K_k(x,s,t)|^2 ds\, dt\Big)^{1/2} dx \\
\lc 1+|n|+|\ell|  \,.
\end{multline}
\end{subequations}
In fact slightly better bounds than \eqref{VlHoer}, \eqref{WellHoer}
can be proved, but they are not good enough to sum in all the
parameters $(n,l)$, $(n,\ell)$, respectively. {Inequalities}
\eqref{VlHoer}, \eqref{WellHoer} can be established by
straightforward $L^1$ and $L^2$ estimates used earlier; we shall not
include the details. One can interpolate the  weak type $(1,1)$
inequalities implied by \eqref{VlHoer}, \eqref{WellHoer}
 and the {improved}  $L^2$ results  to {show} the $L^p$ inequalities
\eqref{p-epsgain} and \eqref{p-epsgain-dual}, and these yield a
proof of Theorem \ref{sobchar}.




\section{Pointwise differentiability}\label{sec8}

Let $f\in L^{2}(\mathbb{R})$. {A} classical result of Stein and
Zygmund \cite{stein-zyg}, \cite[ch. VIII]{stein-book} says that $f$
is differentiable at almost every point~$x\in\mathbb{R}$ for which
there exists $\delta=\delta(x)>0$ such that
\begin{equation}\label{eq58}
\sup_{|t|<\delta} \left| \frac{f(x+2t)-f(x)}{2t}
-\frac{f(x+t)-f(x)}{t}\right| <\infty
\end{equation}
and
\begin{equation}\label{eq0}
\int_{|t|<\delta}\Big|\frac{f(x+2t)-f(x)}{2t}
-\frac{f(x+t)-f(x)}{t}\Big|^{2} \frac{dt}{|t|}<\infty.
\end{equation}
Conversely, for almost every point $x\in \mathbb{R}$ where $f$ is
differentiable there exists $\delta=\delta(x)>0$ such that
\eqref{eq0} holds. {Notice that \eqref{eq58} is the Zygmund
condition at $x$ in disguise.}

The purpose of this section is to discuss analogous results when the
integral in~\eqref{eq0} is replaced by local versions of $S_\alpha
f$ for $\alpha=1${, the square function of the previous sections.}
We drop the subscript and write $S f\equiv S_1 f$.

\subsection{\it Preliminary considerations}
\subsubsection{\it Marcinkiewicz integrals}
The following classical  result on  Marcinkiewicz integrals is a
crucial tool in proving results on pointwise differentiation.

Let $F$ be a closed set of positive measure, and fix $\la>0$. Let
\[
I^{(\lambda)}(x):=\int^{x+1}_{x-1}
\frac{\operatorname{dist}^{\lambda}(y,F)}{|x-y|^{1+\lambda}}\,dy.
\]
Then one proves \cite[p.15]{stein-book} that \Be
\label{marcineq}I^{(\lambda)}(x)<\infty \text{ for almost every }
x\in F. \Ee
\subsubsection{Pointwise comparison with a   related square function}\label{ptfailure}
Given $f\in L^{2} (\mathbb{R})$ and $m\in\mathbb{R}$, consider the
 square functions $Qf$ defined for $x\in \bbR$ by
\[
Qf(x)=\Big(\int_{1<|m|\le 2}\int_{\mathbb{R}} \Big|
\frac{f(x\!+\!mt)\!-\!f(x\!+\!t)}{(m-1)t} -
\frac{f(x\!+\!t)\!-\!f(x)}{t}\Big|^2
 \frac{dt}{|t|}\,dm\Big)^{1/2}.
\]
We shall use the identity
\begin{multline}\label{eq2}
\frac{f(x+mt)-f(x)}{mt} -\frac{f(x+t)-f(x)}{t}\\
= \frac{m-1}{m} \Big( \frac{f(x+mt)-f(x+t)}{(m-1)t}
-\frac{f(x+t)-f(x)}{t}\Big)
\end{multline}
to show that $Qf$ and $Sf$  are equivalent.

\begin{lemma}\label{lem8.1}
There exists a constant $C>0$ such that $$ C^{-1}Qf(x)\le Sf(x)\le C
Qf(x), \quad x\in \bbR, \quad f\in L^2(\bbR). $$
\end{lemma}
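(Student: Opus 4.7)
My approach is to express both $Sf(x)^2$ and $Qf(x)^2$ as integrals of a common integrand $\tilde Q_m f(x)^2 := \int_\bbR |D_m f(x,t)|^2\,dt/|t|$ against different weights, where $D_m f$ denotes the expression inside the absolute value in the definition of $Qf$. Starting from \eqref{eq1}, substituting $s = mt$ in $Sf(x)^2$ (after reducing to $|s|\ge|t|$ by symmetry) and invoking \eqref{eq2}, one obtains
\[
Sf(x)^2 \;=\; 2\int_{|m|>1}\tilde Q_m f(x)^2\,\frac{dm}{m^2}, \qquad Qf(x)^2 \;=\; \int_{1<|m|\le 2}\tilde Q_m f(x)^2\,dm.
\]
From this representation the bound $Qf(x) \le \sqrt{2}\,Sf(x)$ is immediate, since $1/m^2 \ge 1/4$ when $|m|\in (1,2]$.

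For the reverse bound $Sf \le CQf$ I must control the tail $\int_{|m|>2}\tilde Q_m f^2\,dm/m^2$ by a constant times $Qf(x)^2$. The key tool is a telescoping identity, valid for any $m_0 \in (1, m)$, obtained by splitting the $Sf$-integrand at the intermediate point $x+m_0 t$ and applying \eqref{eq2} to each half (with a rescaling $u = m_0 t$ in one of them):
\[
\tfrac{m-1}{m}\,D_m f(x,t) \;=\; \tfrac{m-m_0}{m}\,D_{m/m_0} f(x, m_0 t) + \tfrac{m_0-1}{m_0}\,D_{m_0} f(x,t).
\]
Squaring with $(a+b)^2\le (1+\varepsilon)a^2 + (1+\varepsilon^{-1})b^2$, integrating in $t$ against $dt/|t|$ (invariant under $t\mapsto m_0 t$), and dividing by $(m-1)^2/m^2$ yields a pointwise bound of the form $\tilde Q_m f^2 \le A_\varepsilon(m,m_0)\,\tilde Q_{m/m_0}f^2 + B_\varepsilon(m,m_0)\,\tilde Q_{m_0}f^2$ with explicit rational coefficients.

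I then plan to average this inequality in $m_0$ over $[3/2, 2]$ (so that $\tilde Q_{m_0}f^2$ integrates to a piece of $Qf^2$) and to integrate in $m$ over $(2, \infty)$ against $dm/m^2$. In the $\tilde Q_{m/m_0}f^2$ term I switch the order of integration via $u = m/m_0$; the contribution from $u \in (1, 2]$ is absorbed directly into $Qf^2$, while the contribution from $u \ge 2$ is a multiple $\kappa_\varepsilon\int_2^\infty \tilde Q_u f^2\,du/u^2$ of the quantity being estimated. An explicit Fubini computation reduces this coefficient to
\[
\kappa_\varepsilon \;=\; 2(1+\varepsilon)\sup_{u\ge 2}\frac{(u-1)^2}{u^2}\int_{3u/2}^{2u}\frac{m\,dm}{(m-1)^2} \;=\; 2(1+\varepsilon)\log(4/3),
\]
the supremum being attained as $u\to\infty$. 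Since $2\log(4/3)\approx 0.576 < 1$, for $\varepsilon$ sufficiently small one has $\kappa_\varepsilon < 1$, so the self-referential term can be absorbed into the left-hand side, giving $\int_2^\infty \tilde Q_m f^2\,dm/m^2 \le C Qf(x)^2$. The negative-$m$ regime is handled identically by symmetry, and the main technical obstacle is precisely the verification that $\kappa_\varepsilon < 1$; once that is in hand, the remainder of the argument is a routine Fubini and change-of-variable computation.
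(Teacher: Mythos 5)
Your argument is correct in its arithmetic, but it takes a genuinely different route from the paper's.

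After passing to the common weighted integrals via \eqref{eq1} and \eqref{eq2} — which both proofs do — the paper controls the tail $\int_{|m|\ge 2}$ by a \emph{direct} iterated telescoping: for $u\in(1,2]$ and any integer $N\ge 1$ one writes $m=u^N$, obtains $\cG_{1,u^N}f\le N\,\cG_{1,u}f$ by the Cauchy--Schwarz inequality, and then the change of variables $m=u^N$ yields
\[
\int_{(1+\eps)^N}^{2^N}\cG_{1,m}f^2\,\frac{dm}{(m-1)^2}\le\frac{N^3}{(1+\eps)^{N-1}}\int_{1+\eps}^2\cG_{1,u}f^2\,\frac{du}{(u-1)^2};
\]
summing over $N$ the geometric factor beats the polynomial, and the dyadic-type intervals $[(1+\eps)^N,2^N]$ cover $[2,\infty)$. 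This gives the bound on the tail in one shot, with no absorption. Your argument instead uses a \emph{single-step} split $m=(m/m_0)\cdot m_0$ together with an averaging over $m_0\in[3/2,2]$ and a self-improvement (absorption) step: after Fubini you must check that the coefficient $\kappa_\eps=2(1+\eps)\log(4/3)<1$, and you then subtract the self-referential term from both sides. Your Fubini computation and the value $\log(4/3)\approx 0.288$ are correct, and $2\log(4/3)<1$ does hold, so the numerology works.

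The one genuine gap is that the absorption step implicitly requires the quantity $\int_2^\infty \tilde Q_m f^2\,dm/m^2$ to be finite before you can move $\kappa_\eps\int_2^\infty \tilde Q_m f^2\,dm/m^2$ to the left-hand side; a priori, for $f\in L^2$ this can be infinite at a given $x$. This needs to be acknowledged and repaired, e.g.\ by first restricting the $t$-integration in $\tilde Q_m$ to $|t|>\eta$ (which makes each $\tilde Q^\eta_m f(x)^2$ finite a.e., with a bound $\lesssim_\eta \|f\|_2^2+|f(x)|^2$ uniform in $m$, so that the tail integral is finite), running the absorption at fixed $\eta$ with a constant independent of $\eta$, and then letting $\eta\to 0$ by monotone convergence. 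The change of variables $t\mapsto m_0 t$, $m_0>1$, only shrinks the set $\{|t|>\eta\}$, so the truncated quantities still satisfy the needed telescoping inequality. The paper's iterated telescoping avoids this issue entirely, since it proves a term-by-term comparison and never needs to cancel an a priori possibly infinite quantity on both sides; this is the main structural advantage of the published argument. With the truncation patch, however, your proof is complete and valid.
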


\begin{proof}
Fix $u\in \mathbb{R}$ and $N\ge 1$. Since
\begin{multline*}
\frac{f(x+u^{N}t)-f(x)}{u^{N}t} -\frac{f(x+t)-f(x)}{t}\\
=
\sum^{N}_{j=1}\frac{f(x+u^{j}t)-f(x)}{u^{j}t}-\frac{f(x+u^{j-1}t)-f(x)}{u^{j-1}t},
\end{multline*}
 Schwarz's inequality gives
\begin{multline*}
\left| \frac{f(x+u^{N}t)-f(x)}{u^{N}t} -\frac{f(x+t)-f(x)}{t}\right|^{2}\\
\le N\sum^{N}_{j=1}\Big| \frac{f(x+u^{j}t)-f(x)}{u^{j}t}
-\frac{f(x+u^{j-1}t)-f(x)}{u^{j-1}t}\Big|^{2}.
\end{multline*}
Hence, with $\cG_{1,m}$ as in \eqref{cGdef},
\begin{equation*}
\begin{split} \cG_{1, u^N}f(x)^2
&\le N\sum^{N}_{j=1}\int_{\mathbb{R}} \Big|\frac{f(x+u^{j}t)-f(x)}{u^{j}t} -\frac{f(x+u^{j-1}t)-f(x)}{u^{j-1}t}\Big|^{2}\frac{dt}{|t|}\\
&=N^{2}\cG_{1,u}f(x)^2.
\end{split}
\end{equation*}
Fix $0<\varepsilon<1$. We perform the change of variable $m=u^{N}$ and then estimate
\begin{align*}
&\int^{2^{N}}_{(1+\varepsilon)^{N}} \cG_{1,m}f(x)^2\frac{dm}{(m-1)^2} 
= \int_{1+\eps}^2 \cG_{1,u^N} f(x)^2 \frac{Nu^{N-1}}{(u^N-1)^2} du
\\
&\le N \int_{1+\eps}^2 \cG_{1,u^N} f(x)^2 \frac{1}{u^{N-1}} \frac{du}{(u-1)^2} 
\le \frac{N^3}{(1+\eps)^{N-1}}  \int_{1+\eps}^2 \cG_{1,u} f(x)^2 \frac{du}{(u-1)^2} .
\end{align*}

A similar argument gives
$$
\int_{-2^{N}}^{-(1+\varepsilon)^{N}} \cG_{1,m}f(x)^2
\frac{dm}{(m-1)^{2}}\le \frac{N^{3}}{(1+\varepsilon)^{N-1}}
\int_{-2}^{-1-\varepsilon} \cG_{1,u}f(x)^2
\frac{du}{(u-1)^{2}}.
$$
Thus
$$
\int_{|m|\ge 2}\cG_{1,m}f(x)^2
\frac{dm}{(m-1)^{2}} \lesssim \int_{1<|m|\le 2} \cG_{1,m}f(x)^2
\frac{dm}{(m-1)^{2}}.
$$
By the identity \eqref{eq1} we get
$$
Sf(x)^2\approx
\int_{1<|m|\le 2}\cG_{1,m}f(x)^2
\frac{dm}{(m-1)^{2}}
$$
and the asserted equivalence  follows immediately from the identity
\eqref{eq2}.
\end{proof}

\subsubsection{An inequality for functions in the Zygmund class}
For the proof of Theorems \ref{ptwintro} and \ref{theo8.2}  we need
the following.
\begin{lemma}\label{zyglemma}
Let $f\in\La_*$. Then there is a constant $C$ such that
\begin{multline}
\sup_{x\in \bbR} \sup_{|t|\le 1}
\Big|\frac{f(x+mt)-f(x)}{mt} - \frac{f(x+t)-f(x)}{t}\Big| \\
\le C\|f\|_{\La_*}|m-1| (1+\log(|m-1|^{-1}))\,,
\end{multline}
for $1 < |m|\le 2$.
\end{lemma}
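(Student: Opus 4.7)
The starting point is the algebraic identity~\eqref{eq2}, which rewrites the quantity to be bounded as
$$\Delta(x, t) := \frac{f(x+mt)-f(x)}{mt} - \frac{f(x+t)-f(x)}{t} = \frac{m-1}{m}\bigl[A(x,t) - B(x,t)\bigr],$$
with $A(x,t) = \frac{f(x+mt)-f(x+t)}{(m-1)t}$ and $B(x,t)= \frac{f(x+t)-f(x)}{t}$. Since $|m|\ge 1$, it suffices to prove the bracket estimate
$$|A(x,t)-B(x,t)| \le C\|f\|_{\La_*}\bigl(1+\log(|m-1|^{-1})\bigr),$$
after which multiplication by $|m-1|/|m|$ yields the claimed bound on $\Delta$. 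When $m\in[-2,-1)$ the parameter $|m-1|$ is bounded away from zero, so the logarithmic factor is harmless and the estimate reduces to $|A-B|=O(\|f\|_{\La_*})$; the genuine logarithmic loss arises only in the regime $m\in(1,2]$ with $m-1\in(0,1]$ small.

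To estimate $|A-B|$ I would use a standard Littlewood-Paley decomposition $f=\sum_{k\in\bbZ} f_k$, each $f_k$ frequency-localized to $|\xi|\sim 2^k$. The Zygmund-class assumption supplies the bounds $\|f_k\|_\infty \lesssim \|f\|_{\La_*}\, 2^{-k}$ and $\|f_k^{(j)}\|_\infty \lesssim \|f\|_{\La_*}\, 2^{k(j-1)}$ for every $j\ge 1$. Splitting $A-B = \sum_k (A_k-B_k)$ accordingly, I plan to control each summand by the minimum of three competing estimates: a uniform bound $|A_k|,|B_k|\le \|f_k'\|_\infty$ giving $|A_k-B_k|\lesssim \|f\|_{\La_*}$; a mean-value bound, based on the observation that $A_k$ and $B_k$ are averages of $f_k'$ over intervals whose respective endpoints differ by $O(|t|)$, giving $|A_k-B_k|\lesssim \|f\|_{\La_*}\, 2^k|t|$ through $\|f_k''\|_\infty$; and a pointwise size bound from $\|f_k\|_\infty$ applied to each term separately, giving $|A_k-B_k|\lesssim \|f\|_{\La_*}\, 2^{-k}/(|m-1||t|)$.

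Selecting for each $k$ the sharpest of these three bounds produces three regimes. In the low-frequency range $2^k|t|\le 1$, the mean-value bound is sharpest and the geometric sum is $O(\|f\|_{\La_*})$. In the high-frequency range $2^k|t|\ge |m-1|^{-1}$, the size bound dominates and again sums geometrically to a multiple of $\|f\|_{\La_*}$. The intermediate range $1\le 2^k|t|\le |m-1|^{-1}$ is nonempty only when $|m-1|<1$ and contains $\lesssim \log(|m-1|^{-1})$ dyadic scales; on each of these the uniform bound contributes and accounts for the logarithmic factor. Summing the three pieces gives the bracket estimate, completing the proof. I expect the main difficulty to be bookkeeping rather than any deep step: one must correctly identify the sharpest bound on each dyadic scale and verify that the three geometric sums combine to yield exactly the stated logarithmic growth.
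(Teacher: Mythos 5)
Your proof is correct, but it takes a genuinely different route from the paper's.

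The paper reduces, via the algebraic identity~\eqref{eq2}, to the estimate~\eqref{m-cond} for the bracket $A-B$, exactly as you do; but it then works entirely in physical space. Citing a result of Donaire, Llorente and Nicolau \cite{DLlN} on divided differences of Zygmund functions (if $|s|/2\le |t|\le |s|$ then the difference of the two difference quotients is $O(\|f\|_{\La_*})$), the paper picks $N$ with $2^N(m-1)\in(1,2]$ and telescopes $A$ against difference quotients with increments $2^k(m-1)t$, $k=0,\dots,N$. Each step of the telescope costs $O(\|f\|_{\La_*})$ by the DLlN lemma, and the last term (increment $2^N(m-1)t\approx t$) is compared to $B$ by one more application of the same lemma. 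The count $N\approx \log\tfrac1{m-1}$ produces the logarithmic factor. You instead work on the frequency side: decompose $f=\sum_k f_k$ with $\widehat{f_k}$ supported in $|\xi|\sim 2^k$, invoke the Besov-space characterization $\|f\|_{\La_*}\approx\sup_k 2^k\|f_k\|_\infty$ together with Bernstein, and on each dyadic frequency pick the sharpest of a derivative bound, a second-derivative (mean-value) bound, and a size bound. Your low- and high-frequency tails sum geometrically, and the $\approx\log\tfrac1{|m-1|}$ intermediate scales each contribute $O(\|f\|_{\La_*})$, reproducing the same logarithm. Both arguments are elementary and of comparable length; the paper's version has the advantage of needing only a single black-box fact about divided differences in the Zygmund class and no Fourier analysis, while yours is self-contained modulo standard Littlewood--Paley machinery and makes the source of the logarithm (a count of dyadic frequency scales between $1/|t|$ and $1/(|m-1||t|)$) particularly transparent. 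One detail worth flagging, though it concerns the statement rather than your proof: for $m\in[-2,-1)$ one has $|m-1|\in[2,3]$ and $1+\log(|m-1|^{-1})$ can be negative, so the lemma should really be read with $|\log(|m-1|^{-1})|$, as in~\eqref{eq6}; your treatment of that range (no logarithm, just an $O(\|f\|_{\La_*})$ bound multiplied by the bounded factor $|m-1|/|m|$) is exactly what is needed and matches the paper's.
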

\begin{proof}
We shall use that divided differences of functions in $\La^*$
satisfy a mild regularity property, namely
\begin{equation}\label{DLlN}
\Big|\frac {f(x+t)-f(x)}{t}-\frac{f(x+s)-f(x)}{s}\Big| \le C
\|f\|_{\La_*}
\end{equation}
{for  $x,t,s\in \bbR$ with $|s|/2\le |t|\le |s|$,} see \cite[Lemma
2]{DLlN}. This implies in particular the easier version of
\eqref{eq6} where the sup is just taken over $m\in [-2,-1]$.

Now if $1<m\le 2$ we apply the crucial identity \eqref{eq2} to gain
the factor $m-1$; we then see that it suffices  to show, for any
$x,t\in \bbR$ and $1<m\le 2$, \Be\label{m-cond} \Big|
\frac{f(x+mt)-f(x+t)}{(m-1)t} -\frac{f(x+t)-f(x)}{t}\Big|\lc
\|f\|_{\La_*}(1+\log \frac{1}{m-1}). \Ee Let $N$ be the positive
integer satisfying $1<2^N(m-1)\le 2$. Since
\[
\Big| \frac{f(x+t+2^{k-1}(m-1)t))-f(x+t)}{2^{k-1}(m-1)t}
-\frac{f(x+t+2^k(m-1)t)-f(x+t)}{2^k(m-1)t}\Big|
\]
is bounded by $C \|f\|_{\La_*}$, uniformly in $x,t,k,m$, we obtain,
summing in $k=1,\dots, N$,
\[
\Big| \frac{f(x+mt)-f(x+t)}{(m-1)t}
-\frac{f(x+t+2^N(m-1)t)-f(x+t)}{{2^{N}}(m-1)t}\Big|\lc N
\|f\|_{\La_*}.
\]
This gives \eqref{m-cond}.
\end{proof}

\subsection{\it Differentiability versus  finiteness of a square-function: an example}
We shall consider for any $\delta>0$  the local version
$S_{\text{loc},\delta}$ of $S$, defined by
\begin{multline}\label{Slocdef}
S_{\text{loc},\delta}f(x)\\
= \Big(\iint_{|t|+|s|<\delta} \Big| \frac{f(x+t)-f(x)}{t}
-\frac{f(x+s)-f(x)}{s}\Big|^{2}\frac{ds\,dt}{|s-t|^{2}}\Big)^{1/2}.
\end{multline}

We show that the  finiteness of $S_{\text{loc},\delta}f(x)$ is
generally not a necessary condition for differentiability.
Specifically we  present an example of a function~$f$ differentiable
at almost every point of a set~$E$ of positive measure such that for
any $\delta>0$,
\[
S_{\text{loc},\delta}f(x)= \infty, \text{ for a.e. } x\in E\,.\]
Hence an analogue of the result of Stein and Zygmund in this context
does not hold without additional assumptions on the function~$f$
(such as for example the Zygmund class condition in Theorem
\ref{ptwintro}).

Let $E\subset \mathbb{R}$ be a closed set of  positive Lebesgue
measure without interior points.
 Write $\mathbb{R}\backslash E =\cup I_{j}$, where $I_{j}=(c_j-b_j, c_j+b_j)$
 are pairwise disjoint open intervals.
We denote by $I_j^{\text{half}} =(c_j-b_j/2, c_j+b_j/2)$ the inner
half of $I_j$. Let $f\colon \mathbb{R}\to \mathbb{R}$ satisfying
\begin{equation}\label{eq3}
|f(x)|\le \operatorname{dist}(x,E),\quad x\in\mathbb{R}
\end{equation}
and,  for each $j$,
\begin{equation}\label{eq3bis}
\int_{0}^\delta\left|\frac{f(y+u)-f(y)}{u}\right|^{2}\,du=\infty
\text{ for all } y\in I_j^{\text{half}}.
\end{equation}

The change of variable $s=mt$ and identity~\eqref{eq3bis} gives
\begin{multline*}
S^{2}_{\text{loc},\delta}f(x)\\
\approx \int_{|t|<\delta} \int_{|m|>1}\left(
\frac{f(x+mt)-f(x+t)}{(m-1)t}
-\frac{f(x+t)-f(x)}{t}\right)^{2}\,dm\frac{dt}{|t|}.
\end{multline*}
We apply  now Stepanov's Theorem
(\cite[VIII, Thm.3]{stein-book} or \cite{maly}). It says that $f$ is
differentiable at almost every point in $E$ if and only if
$f(x_0+y)-f(x_0)=O(|y|)$ as $|y|\to 0$ for almost every $x_0\in E$.
Hence condition~\eqref{eq3} implies that $f$ is differentiable at
almost every point of~$E$. Moreover
 \eqref{eq3}  and the Marcinkiewicz inequality \eqref{marcineq} for $\la=2$
imply
\[\int_{|t|<\delta}\left(\frac{{f(x+t)}}{t}\right)^{2}\frac{dt}{|t|}<\infty,\quad \text{for a.e.\ }x\in E.
\]
On the other hand, the change of variable~$(m-1)t=u$ gives
\begin{multline*}
\int_{|t|<\delta}\int_{|m|>1}\left( \frac{f(x+mt)-f(x+t)}{(m-1)t} \right)^{2}\,dm\frac{dt}{|t|}\\
\ge \int_{|t|<\delta} \int_{0}^{1}\left(
\frac{f(x+t+u)-f(x+t)}{u}\right)^{2}\,du\frac{dt}{|t|^{2}}.
\end{multline*}
Now, for fixed $\delta>0$, for almost every $x\in E$ the interval
$(x-\delta, x+\delta)$ contains an interval~$I_{j}$. Here we use the
assumption that $E$ is a closed set with no  interior points.  Hence
there exists a set of points $t\in (-\delta,\delta)$ of positive
measure such that $x+t\in I_{j}^\text{half}$ and
condition~\eqref{eq3bis} shows that the last  integral diverges.
Hence $S_{\text{loc},\delta}f(x)=\infty$ for almost every  $x\in E$.

\subsection{\it The main result on pointwise differentiability}
We shall now consider functions that are locally in the Zygmund
class, i.e. satisfy condition \eqref{eq4} below. {This condition
clearly holds} when $f$ is differentiable at $x$, but {it is}
substantially weaker.

\begin{theorem}\label{theo8.2}
Let $f\in L^{2}_{\operatorname{loc}}(\mathbb{R})$.

a) The function $f$ is differentiable at almost every
point~$x\in\mathbb{R}$ where the following two conditions hold
\begin{equation}\label{eq4}
\limsup_{|h|\to 0}\left|\frac{f(x+2h)-f(x)}{2h}-
\frac{f(x+h)-f(x)}{h}\right|<\infty
\end{equation}
and there exists $\delta=\delta(x)>0$ such that
\begin{equation}\label{eq5}
\iint_{|s|+|t|<\delta} \left| \frac{f(x+s)-f(x)}{s} -
\frac{f(x+t)-f(x)}{t}\right|^{2}\frac{ds\,dt}{|s-t|^{2}}<\infty.
\end{equation}

b) For almost every point $x\in\mathbb{R}$ where $f$ is
differentiable and
\begin{equation}\label{eq6}
\limsup_{|t|\to 0} \sup_{1<|m|\le 2}\frac{ \left|
\frac{f(x+mt)-f(x)}{mt}-\frac{f(x+t)-f(x)}{t}\right|}{|m-1|(1+\big|\log
\tfrac{1}{|m-1|}\big|)}<\infty,
\end{equation}
there exists $\delta=\delta(x)>0$ such that
\begin{equation}\label{eq7}
\iint_{|s|+|t|<\delta} \left| \frac{f(x+s)-f(x)}{s} -
\frac{f(x+t)-f(x)}{t}\right|^{2}\frac{ds\,dt}{|s-t|^{2}}<\infty.
\end{equation}
\end{theorem}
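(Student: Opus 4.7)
For part (a), I would reduce to the classical Stein--Zygmund theorem. The condition~\eqref{eq4} is exactly the pointwise Zygmund condition~\eqref{eq58}, so it only remains to produce a localized form of~\eqref{eq0} from~\eqref{eq5}. This is a localization of the pointwise inequality $G_1 f(x)\le C S_1 f(x)$ contained in Lemma~\ref{lem3.1} (applied with $m=2$). Rerunning the iteration of that lemma with the $t$-integrations restricted to $|t|<\delta$ (so that the substitution $u=mt/s$ only enlarges the locality scale by a bounded factor since $1\le s\le m\le 2$), and applying the identity~\eqref{eq1} in its localized form (where the range $1<|m|\le 2$ contracts the locality scale by a factor at most $3$), one obtains $G_{1,\loc,\delta'}f(x)\lc S_{1,\loc,\delta}f(x)$ for a suitable $\delta'>0$. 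The classical Stein--Zygmund theorem then yields differentiability of $f$ at almost every point of the hypothesis set.

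For part (b), I would pass, via Lemma~\ref{lem8.1}, to the equivalent quantity $Q_{\loc,\delta}f(x)\approx S_{\loc,\delta}f(x)$ and write, with $\phi(t)=(f(x+t)-f(x))/t$,
\[
Q_{\loc,\delta}f(x)^2\approx\int_{1<|m|\le 2}\int_{|t|<\delta/2}\frac{|\phi(mt)-\phi(t)|^2}{(m-1)^2|t|}\,dt\,dm.
\]
By Lusin and Egoroff, the hypothesis set decomposes, up to a null set, into a countable union of closed sets $F$ on each of which the constants in~\eqref{eq6} are uniform ($C(x)\le C_F$, $\delta_0(x)\ge\delta_F$) and the differentiability is uniform in the sense that $|\phi(t)-f'(x)|\le\eta_F(|t|)$ for some modulus $\eta_F(\tau)\to 0$. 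It suffices to prove finiteness at almost every density point of any such $F$. Split the $m$-range into the off-diagonal piece $|m-1|\ge 1/2$ and the near-diagonal piece $|m-1|<1/2$. In the off-diagonal piece, the weight $(m-1)^{-2}$ is bounded and the inner integrals reduce to Marcinkiewicz-type integrals $\int|\phi(mt)-\phi(t)|^2\,dt/|t|$; a Whitney-type extension on $F$ combined with the Marcinkiewicz inequality~\eqref{marcineq} gives their finiteness at a.e. density point of $F$, uniformly in $m$ over compact subsets of $(1,2]$ (extending the classical Stein--Zygmund conclusion beyond $m=2$). In the near-diagonal piece I would combine~\eqref{eq6} with the uniform differentiability to obtain
\[
|\phi(mt)-\phi(t)|\le\min\bigl(2\eta_F(|t|),\,C_F|m-1|(1+|\log|m-1||)\bigr),
\]
and perform a dyadic decomposition in $|t|$ that optimizes the crossover between the two bounds at each scale.

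The main obstacle will be this near-diagonal contribution. Condition~\eqref{eq6} by itself yields an integrand of order $(1+|\log|m-1||)^2/|t|$ over $|t|<\delta_F$, which fails to be integrable against $dt/|t|$: the logarithmic factor is just critical. Producing the missing integrability requires a quantitative form of the differentiability on $F$ coupled to the Marcinkiewicz integral~\eqref{marcineq} at density points of $F$; carrying out the dyadic crossover analysis so that the scalewise contributions $\eta_F(2^{-k}\delta_F)\log(1/\eta_F(2^{-k}\delta_F))$ sum to a finite quantity is the technical heart of the argument, and is what forces the reduction to closed subsets on which both the Zygmund-type and differentiability data behave uniformly.
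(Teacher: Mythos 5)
Your part (a) is essentially the paper's argument: both reduce to Stein--Zygmund via the pointwise majorization $G_1 f \le C\,S_1 f$ (Lemma~\ref{lem3.1}). The paper localizes more simply, by restricting to the set $E^\delta$ where both conditions hold with constant $\delta^{-1}$ and truncating $f$ to vanish outside an interval of length $\delta$, so that $S f(x)<\infty$ globally and Lemma~\ref{lem3.1} applies verbatim; rerunning the iteration of Lemma~\ref{lem3.1} with truncated $t$-ranges, as you propose, also works but is more awkward.

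For part (b), however, your plan has a genuine gap, and it lies precisely where you flag it. Your decomposition is in the $m$-variable (near-diagonal vs.\ off-diagonal), and for the near-diagonal piece you want to interpolate between $|\phi(mt)-\phi(t)|\lesssim\eta_F(|t|)$ (Egoroff) and the bound from \eqref{eq6}, asking that the scalewise quantities $\eta_F(2^{-k}\delta_F)\log(1/\eta_F(2^{-k}\delta_F))$ be summable. They need not be: Egoroff gives no rate for $\eta_F$, and for instance $\eta_F(\tau)\sim 1/\log(1/\tau)$ produces a divergent sum $\sum_k (\log k)/k$. So the crossover does not close, and no amount of dyadic bookkeeping built only on a qualitative modulus will repair it. The paper's route is structurally different. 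After passing to a compact $F\subset E(\delta)$ with $|E(\delta)\setminus F|<\varepsilon$, one performs a good/bad decomposition of the \emph{function}, $f=g+b$, where $g$ is Lipschitz on all of $\bbR$ (Whitney extension from $F$, using the uniform bound $|f(x+t)-f(x)|\le\delta^{-1}|t|$ on $E(\delta)$, which holds on a set of near-full measure and is genuinely Lipschitz, not merely a modulus) and $b$ vanishes on $F$. The part $S g<\infty$ a.e.\ is free from the $L^2$ bound; for $b$ one writes out $Sb\approx Qb$ and estimates term by term ($A$, $C$, $E_j$, $G_j$, $H_j$ in the paper's notation) in terms of the Whitney intervals $\{I_j\}$ of $\bbR\setminus F$. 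The log coming from \eqref{eq6} appears as $\ln^2(|I_j|/|x-x_j|)$; it is killed by the elementary inequality $\ln^2(|I_j|/|x-x_j|)\lesssim(|x-x_j|/|I_j|)^\alpha$ for a small $\alpha>0$, after which every contribution sums to a Marcinkiewicz integral $\sum_j |I_j|^{1+\lambda}/|x-x_j|^{1+\lambda}$ with $\lambda>0$, which is finite a.e.\ on $F$ by \eqref{marcineq}. In short: what actually kills the critical logarithm is the Whitney geometry (the control $|m-1|\lesssim|I_j|/|x-x_j|$ when $x+t\in I_j$ and $x+mt\in I_j^*$) combined with the strictly positive exponent in \eqref{marcineq}, not a modulus of continuity; without the $f=g+b$ split and the explicit Whitney-interval accounting the near-diagonal piece remains just barely divergent.
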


\begin{proof}[Proof of Theorem \ref{ptwintro}]
One direction is immediate {by a) of Theorem~\ref{theo8.2}}. For the
other direction one  needs  to verify that condition \eqref{eq6}
holds for any $f\in \Lambda_*$.  But this  was proved in Lemma
\ref{zyglemma}. \end{proof}


\begin{proof}[Proof of Theorem \ref{theo8.2}]
(a) Given $\delta>0$, let $E=E^\delta$ be the set of points $x\in
\mathbb{R}$ for which
$$
\iint_{|t|+|s|<\delta} \left| \frac{f(x+t)-f(x)}{t}-
\frac{f(x+s)-f(x)}{s}
\right|^{2}\frac{ds\,dt}{|s-t|^{2}}<\delta^{-1}
$$
and
$$
\sup_{|h|<\delta}\left|\frac{f(x+2h)-f(x)}{2h}
-\frac{f(x+h)-f(x)}{h}\right|<\delta^{-1}.
$$
We show that for any fixed $\delta>0$, the function~$f$ is
differentiable at almost every point of~$E^\delta$. We can assume
that $f$ vanishes outside an interval~$I$ of length~$\delta$. For
$x\in E^\delta\cap I$ we have $Sf(x)<\infty$ and 
Lemma~\ref{lem3.1} gives $G_1f(x)=2\cG_{1,2}f(x)<\infty$.
Now, the Stein--Zygmund result gives that $f$ is differentiable at
almost every point of $E^\delta\cap I$. The assertion a) follows if
we consider the union  $\cup_{j>1} E^{1/j}$.

\vspace*{7pt}

(b) Given $\delta>0$, let $E(\delta)$ be the set of points $x\in
\mathbb{R}$ for which
\[
|f(x+t)-f(x)| \le \delta^{-1}|t|,\] holds for $0\le |t| \le\delta$
and
\[\Big| \frac{f(x\!+\!mt)\!-\!f(x)}{mt}- \frac{f(x\!+\!t)\!-\!f(x)}{t}\Big |\le \delta^{-1}|m\!-\!1|\big|{1+}\log \tfrac{1}{|m\!-\!1|}\big|\]
holds when  $1<|m|\le 2,$ and $0<|t|\le\delta$. It suffices to show
that condition~\eqref{eq7} holds for almost every point $x\in
E(\delta)$ for each given $\delta$ (then one takes the  union
$\cup_j E(1/j)$) Without loss of generality we can assume that
$E(\delta)$ is compact and that $f$ vanishes outside an interval~$I$
of length~$\delta$.

Given $\varepsilon >0$, we prove that the set of all $x\in
E(\delta)$ where \eqref{eq7} fails is of measure less than $\eps$.
We can find a compact set $F\subset E(\delta)$ {with}
$|E(\delta)\backslash F|<\varepsilon$ and a decomposition $f=g+b$
where $g$ is Lipschitz on $\bbR$ and $b$ vanishes on~$F$
(\cite[p.~248]{stein-book}). Moreover we can also assume that $g$
and $b$ vanish outside $I^*$,
 the double interval with the same center.
Applying the $L^2$ inequality for $Sg$ we get  $Sg(x)<\infty$ for
almost every $x\in \mathbb{R}$. Hence we need to show that
\[Sb(x)<\infty \text{  for almost every $x\in F \cap I$}.\]

Since $g$ is Lipschitz on $\bbR$ we get \Be \sup_{x\in
\bbR}\sup_{|t|\le \delta}\frac{ |g(x+t)-g(x)| }{|t|}\!\le\!
C_1,\label{eq8}\Ee and since the Lipschitz space is contained in the
Zygmund class we also have by Lemma \ref{zyglemma} \Be \sup_{x\in
\bbR}\sup_{\substack{ |t|\le \delta}} \Big| \frac{g(x+mt)-b(x)}{mt}-
\frac{g(x\!+\!t)\!-\!b(x)}{t}\Big| \le C_2|m-1| (1+\log
\tfrac{1}{|m-1|} )\label{eq9}\Ee for  $1\le|m|\le 2$.

Therefore
 the function~$b$ satisfies, for some positive constant $A$,
   \Be \sup_{|t|\le \delta}\frac{ |b(x+t)-b(x)| }{|t|}\!\le\!
A,\label{eq8}\Ee and \Be \sup_{\substack{ |t|\le \delta}} \Big|
\frac{b(x+mt)-b(x)}{mt}- \frac{b(x\!+\!t)\!-\!b(x)}{t}\Big| \le A
|m-1| (1+\log \tfrac{1}{|m-1|} )\label{eq9}\Ee for all $x\in F$ and
$1\le|m|\le 2$.

For the remainder of this proof implicit constants in inequalities
of the form $\lc$ may depend on $A$.

Consider a Whitney decomposition of the open set~$I^*\backslash
\overline{I}\cap F$, that is, $I^*\backslash \overline{I}\cap F=\cup
I_{j}$, where $\{I_{j}\}$ are pairwise disjoint intervals with
\[|I_j| \le \operatorname{dist}(I_{j},\overline{I}\cap F)\le 4 |I_{j}|.\]
Set $\overline I_j=[a_j,b_j]$ and let $x_{j}=(a_j+b_j)/2$ denote the
center of~$I_{j}$. We let
 $I_j^*= (x_j-|I_j|, x_j+|I_j|)$ denote  the open double interval.
By \eqref{marcineq}
\begin{equation}\label{eq10}
\sum_{j}\frac{|I_{j}|^{1+\lambda}}{|x-x_{j}|^{1+\lambda}} \lesssim
I^{\lambda} (x)<\infty
\end{equation}
for almost every $x\in I\cap F$. The plan of the proof is to show
that there exists $\lambda>0$ such that
$$
|Sb(x)|^2\lesssim
\sum_{j}\frac{|I_{j}|^{1+\lambda}}{|x-x_{j}|^{1+\lambda}},
$$
for almost every $x\in F\cap I$.

By part~(a) of Lemma~\ref{lem8.1} we have $Sb(x)\approx Qb(x)$.
Write \[L=\{m\in\mathbb{R}: 1<|m|\le 2\}.\] Then $|Qb(x)|^2\lesssim
A(x)+B(x),$ where
\begin{align*}
A(x)&=\int_{\mathbb{R}} \left|\frac{b(x+t)-b(x)}{t}\right|^{2}
\frac{dt}{|t|},\\*[7pt] B(x)&=\int_{L}\int_{\mathbb{R}}
\left|\frac{b(x+mt)-b(x+t)}{(m-1)t}\right|^{2}\frac{dt}{|t|}\,dm.
\end{align*}
We need to show that $A(x)<\infty$, $B(x)<\infty$ for almost every
$x\in F\cap I$. In what follows we will always assume $x\in F\cap
I$.

Since $b_{|F}\equiv 0$, condition~\eqref{eq8} gives that
$\sup\limits_{x\in I_{j}} |b(x)|\lesssim |I_{j}|$ and hence
\begin{equation}\label{eq11}
\int_{I_{j}}|b|^{2}\lesssim |I_{j}|^{3}.
\end{equation}
{Therefore}
$$
A(x)=\int_{\mathbb{R}}\frac{b(y)^{2}} {|y-x|^{3}}\,dy\lesssim
\sum_{j}\frac{|I_{j}|^{3}}{|x_{j}-x|^{3}}
$$
which by~\eqref{eq10} is finite for almost every $x\in F\cap I$.

Now $B(x)=C(x)+D(x)$ where
\begin{align*}
C(x)&=\int_{L}\int_{F-x}\Big|\frac{b(x+mt)}{(m-1)t}\Big|^{2}\frac{dt}{|t|}\,dm,
\\*[7pt]
D(x)&=\int_{L}\int_{\mathbb{R}\backslash
(F-x)}\Big|\frac{b(x+mt)-b(x+t)}{(m-1)t}\Big|^{2}\frac{dt}{|t|}\,dm.
\end{align*}
For each $m\in L$ and $j=1,2,\dotsc$, let
$I_{j}(m)=\frac{I_{j}-x}{m}$. We have
$$
C(x)=\int_{L}\sum_{j}\int_{t\in (F-x)\cap
I_{j}(m)}\Big|\frac{b(x+mt)}{(m-1)t}\Big|^{2}\frac{dt}{|t|}\,dm.
$$
Since $b_{|F}\equiv 0$, condition~\eqref{eq8} gives that
$\sup\{|b(x+mt)|: t\in I_{j}(m)\}\lesssim |I_{j}|$. Hence
$$
C(x)\lesssim\int_{L}\sum_{j} |I_{j}|^{2}\Big(\int_{t\in (F-x)\cap
I_{j}(m)}\frac{dt}{|t|^{3}}\Big)\frac{dm}{(m-1)^{2}}.
$$
Now for each $t\in F-x$ we have $t\in I_{j}(m)$ if and only if $m\in
I_{j}(t)$. Write $I_{j}=(a_{j},b_{j})$. Then
$I_{j}(t)=((a_{j}-x)/t,(b_{j}-x)/t)$ and
$$
\int_{I_{j}(t)}\frac{dm}{(m-1)^{2}}\le \left|\frac{t}{a_{j}-x-t}
-\frac{t}{b_{j}-x-t}\right|
=|t|\frac{|I_{j}|}{|a_{j}-x-t||b_{j}-x-t|}.
$$
Since $x+t\in F$ we have that both $|a_{j}-(x+t)|$ and
$|b_{j}-(x+t)|$ are comparable to $|x_{j}-(x+t)|$. Then
$$
\int_{I_{j}(t)} \frac{dm}{(m-1)^{2}}\lesssim \frac{|t||I_{j}|}
{|x_{j}-(x+t)|^{2}}.
$$
Since $x+t\in F$ we have $|x+t-x_{j}|\gtrsim |I_{j}|$. Moreover
{from} $m\in I_{j}(t)$, $|m|\le 2$ and $x\in F$ we have $|t|\ge
|a_{j}-x|/2\approx |x-x_{j}|$. Fubini's Theorem gives
\begin{multline*}
C(x)\lesssim\sum_{j}|I_{j}|^{3}\int_{\begin{subarray}{c} |t|\gtrsim
|x-x_{j}|\\ |x+t-x_{j}|\gtrsim
|I_{j}|\end{subarray}}\frac{dt}{|x_{j}-(x+t)|^{2}|t|^{2}}\\*[7pt]
\lesssim
\sum_{j}\frac{|I_{j}|^{3}}{|x-x_{j}|^{2}}\int_{|x+t-x_{j}|\gtrsim
|I_{j}|} \frac{dt}
{|x_{j}-(x+t)|^{2}}\lesssim\sum_{j}\frac{|I_{j}|^{3}}{|x-x_{j}|^{3}},
\end{multline*}
which by \eqref{eq10} is finite a.e.\ $x\in F$.

Next we  estimate $D(x)$. By Fubini's Theorem,
$$
D(x)=\sum_{j}\int^{b_{j}-x}_{a_{j}-x} \int_{L}\Big|
\frac{b(x+mt)-b(x+t)}{(m-1)t} \Big|^{2}\,dm \frac{dt}{|t|}=
\sum_{j}E_{j}(x)+F_{j}(x),
$$
where
\begin{align*}
E_{j}(x)&=\int^{b_{j}-x}_{a_{j}-x}\int_{\{m\in L: x+mt\in F\}}\Big|
\frac{b(x+t)}{(m-1)t}\Big|^{2}\,dm \frac{dt}{|t|},\\*[7pt]
F_{j}(x)&=\int^{b_{j}-x}_{a_{j}-x}\int_{\{m\in L: x+mt\notin
F\}}\Big| \frac{b(x+mt)-b(x+t)}{(m-1)t}\Big|^{2}\,dm \frac{dt}{|t|}.
\end{align*}
Note that since $\{I_{j}\}$ are Whitney intervals, $x+t\in I_{j}$
and $x+mt\in F$ implies that $|m-1|\gtrsim |I_{j}|/|x-x_{j}|$. Hence
$$
E_{j}(x)\lesssim
\int^{b_{j}-x}_{a_{j}-x}\frac{|b(x+t)|^2}{|t|^{3}}\frac{|x-x_{j}|}{|I_{j}|}\,dt.
$$
Since $b_{|F}\equiv 0$ condition~\eqref{eq8} gives that
$|b(x+t)|\lesssim |I_{j}|$ for $t\in (a_{j}-x,b_{j}-x)$. We have
$|t|\approx |x-x_{j}|$ for any $t\in (a_{j}-x,b_{j}-x)$ {because}
the $I_{j}$ are Whitney intervals.

It follows that
$$
E_{j}(x)\lesssim \frac{|I_{j}|^{2}}{|x-x_{j}|^{2}}
$$
and
$$
\sum_{j}E_{j}(x)\lesssim \sum_{j}\frac{|I_{j}|^{2}}{(x-x_{j})^{2}},
$$
which by \eqref{eq10} is finite a.e.\ $x\in F$.

Let us now estimate $F_{j}(x)$. Split $F_{j}(x)=G_{j}(x)+H_{j}(x)$
where
\begin{align*}
G_{j}(x)&=\int^{b_{j}-x}_{a_{j}-x}\int_{\{m\in L: x+mt\in
I_{j}^*\}}\Big|\frac{b(x+mt)-b(x+t)}{(m-1)t}\Big|^{2}\,dm
\frac{dt}{|t|},\\*[7pt] H_{j}(x)&=\int^{b_{j}-x}_{a_{j}-x}\sum_{k\ne
j}\int_{\{m\in L: x+mt\in I_{k}\backslash
I_{j}^*\}}\Big|\frac{b(x+mt)-b(x+t)}{(m-1)t}\Big|^{2}\,dm
\frac{dt}{|t|}.
\end{align*}
Recall that $I_j^*$ denotes the interval of double length centered
at $x_j$. Now, the assumption~\eqref{eq6} and identity~\eqref{eq2}
give that
$$
\Big| \frac{b(x+mt)-b(x+t)} {(m-1)t} -\frac{b(x+t)-b(x)}{t}\Big|
\lesssim \Big|\ln \frac{1}{|m-1|}\Big|.
$$
Hence $G_{j}(x)$ is bounded by a fixed multiple of
$$
\int^{b_{j}-x}_{a_{j}-x}\int_{\{m\in L: x+mt\in I_{j}^*\}}
\ln^{2}\frac{1}{|m-1|} \,dm \frac{dt}{|t|}
+\int_{a_{j}-x}^{b_{j}-x}\Big|
\frac{b(x+t)-b(x)}{t}\Big|^{2}\frac{dt}{|t|}.
$$
Since $\{I_{j}\}$ are Whitney intervals, the fact that $x+t\in
I_{j}$ and $x+mt\in  I_{j}^*$ {implies} that $|m-1|\lesssim
\frac{|I_{j}|}{|x-x_{j}|}$. {So}
$$
\int_{\{m\in L: x+mt\in I_{j}^*\}} \ln^{2}\frac{1}{|m-1|} \,dm
\lesssim \frac{|I_{j}|}{|x-x_{j}|}\ln^{2}\frac{|I_{j}|}{|x-x_{j}|}
$$
and we deduce that
$$
\int^{b_{j}-x}_{a_{j}-x}\int_{\{m\in L: x+mt\in I_{j}^*\}}
\ln^{2}\frac{1}{|m-1|} \,dm\frac{dt}{|t|} \lesssim
\frac{|I_{j}|^{2}}{|x-x_{j}|^{2}}\ln^{2}\frac{|I_{j}|}{|x-x_{j}|}.
$$
{Because} $\{I_{j}\}$ are Whitney intervals $|I_{j}|\lesssim
|x-x_{j}|$ and $\ln^{2}(|I_{j}|/|x-x_{j}|)\lesssim
|x-x_{j}|^{\alpha}/|I_{j}|^{\alpha}$ for any~$\alpha>0$, we deduce
$$
\int^{b_{j}-x}_{a_{j}-x}\int_{\{m\in L: x+mt\in I_{j}^*\}}
\ln^{2}\frac{1}{|m-1|} \,dm\frac{dt}{|t|} \lesssim
\frac{|I_{j}|^{2-\alpha}}{|x-x_{j}|^{2-\alpha}}.
$$
As before
$$
\int^{b_{j}-x}_{a_{j}-x}\Big|\frac{b(x+t)-b(x)}{t}\Big|^{2}\frac{dt}{|t|}
\lesssim \frac{|I_{j}|^{3}}{|x-x_{j}|^{3}}.
$$
{We obtain}
$$
\sum_{j}G_{j}(x)\lesssim
\sum_{j}\frac{|I_{j}|^{2-\alpha}}{|x-x_{j}|^{2-\alpha}},
$$
where $0<\alpha<1$, which by \eqref{eq10}  is finite a.e.\ $x\in F$.

It remains to estimate  $H_{j}(x) $ for $x\in F\cap I$. Observe that
$|t|\sim |x-x_{j}|$ if $t\in (a_{j}-x,b_{j}-x)$. Since $1<|m|\le 2$
if the set
\[J_{k}=\{m\in L: x+mt\in I_{k}\backslash I_{j}^*\}\] is nonempty we {get} that $|x-x_{k}|\simeq |x-x_{j}|$. Now we have
\begin{align*}
\int_{J_{k}}\frac{dm}{(m-1)^{2}}&\lesssim
\Big| \Big(\frac{a_k-x}{t}-1\Big)^{-1}
-\Big(\frac{b_k-x}{t}-1\Big)^{-1}\Big|
\\ &=
 \frac{|t||b_k-a_k|}{|a_k-x-t||b_k-x-t|}
\approx \frac{|t| |I_{k}|}{(x_{k}-x_{j})^{2}}.
\end{align*}
Since $b_{|F}\equiv 0$, condition~\eqref{eq8} gives
$|b(x+mt)|\lesssim |I_{k}|$ and $|b(x+t)|\lesssim |I_{j}|$ for any
$t\in (a_{j}-x, b_{j}-x)$ and $m\in J_{k}$. Now, Fubini's Theorem
gives
$$
H_{j}(x)\lesssim \sum_{k\ne j: |x-x_{k}|\approx |x-x_{j}|}
(|I_{k}|^{2}+|I_{j}|^{2})\frac{|I_{k}|}{(x_{k}-x_{j})^{2}}
\int^{b_{j}-x}_{a_{j}-x}\frac{dt}{|t|^{2}}.
$$

Denote by $A(j)$ the set of indices~$k\ne j$ such that
$|x-x_{k}|\approx |x-x_{j}|$. Since $\{I_{j}\}$ are Whitney
{intervals}
$$
H_{j}(x)\lesssim \sum_{k\in
A(j)}\frac{(|I_{k}|^{2}+|I_{j}|^{2})|I_{k}| |I_{j}|}
{(x_{k}-x_{j})^{2}(x-x_{j})^{2}}
$$
{and since}
$$
\sum_{k\in A(j)}\frac{|I_{k}|} {(x_{k}-x_{j})^{2}}\lesssim
\frac{1}{|I_{j}|},
$$ we get
$$
\sum_{k\in A(j)}
\frac{|I_{j}|^{3}|I_{k}|}{(x_{k}-x_{j})^{2}(x-x_{j})^{2}}\lesssim
\frac{|I_{j}|^{2}}{(x-x_{j})^{2}}.
$$
{We have}
$$
\sum_{j}\sum_{k\in
A(j)}\frac{|I_{k}|^{3}|I_{j}|}{(x_{k}-x_{j})^{2}(x-x_{j})^{2}}\cong
\sum_{k}\sum_{j\in A(k)} \frac{|I_{k}|^{3} |I_{j}|}
{(x_{k}-x_{j})^{2}(x-x_{k})^{2}}
$$
{and}
$$\sum_{j\in A(k)} \frac{|I_{j}|} {(x_{k}-x_{j})^{2}}\lesssim \frac{1}{|I_{k}|}. $$
{Therefore}
$$\sum_{j}\sum_{k\in A(j)} \frac{|I_{k}|^{3} |I_{j}|}{(x_{k}-x_{j})^{2}(x-x_{j})^{2}}\lesssim \sum_{k}\frac{|I_{k}|^{2}}{(x-x_{k})^{2}}$$
{and so}
$$
\sum_{j}H_{j}(x) \lesssim \sum_{j}\frac{|I_{j}|^{2}}{(x-x_{j})^{2}}
$$
which by \eqref{eq10} is finite a.e.\ $x\in F$. This finishes the
proof.
\end{proof}


\smallskip

\noi{\it Acknowledgements.}

{J.~Cuf\'{\i}, A.~Nicolau and J.~Verdera were partially supported by the
grants 2014SGR75 and 2014SGR289 of Generalitat de Catalunya,
MTM2014-51824, MTM2013-44699, MTM2016-75390 and MTM2017--85666 of Ministerio de Educaci\'on,
Cultura y Deporte.} 

{A.~Seeger was partially supported by  NSF grant DMS 1500162, and  as a Simons Visiting Researcher at CRM. He  would like to 
 thank  the  organizers of the 2016 program in Constructive Approximation and Harmonic Analysis
 for the invitation and for providing a pleasant and fruitful research atmosphere.}

\end{document}